\theoremstyle{plain}
\newtheorem{theorem}{Theorem}[section]
\newtheorem{proposition}[theorem]{Proposition}
\newtheorem{lemma}[theorem]{Lemma}
\newtheorem{corollary}[theorem]{Corollary}
\newtheorem{definition}[theorem]{Definition}
\newtheorem{remark}[theorem]{Remark}
\newtheorem{example}[theorem]{Example}
\newcommand{\scrA}{\mathcal{A}}
\newcommand{\scrH}{\mathcal{H}}
\newcommand{\scrO}{\mathcal{O}}
\newcommand{\fraka}{\mathfrak{a}}
\newcommand{\frakc}{\mathfrak{c}}
\newcommand{\frakg}{\mathfrak{g}}
\newcommand{\frakh}{\mathfrak{h}}
\newcommand{\fraki}{\mathfrak{i}}
\newcommand{\frakj}{\mathfrak{j}}
\newcommand{\frakk}{\mathfrak{k}}
\newcommand{\frakp}{\mathfrak{p}}
\newcommand{\fraku}{\mathfrak{u}}
\newcommand{\rmL}{\mathrm{L}}
\newcommand{\rmO}{\mathrm{O}}
\newcommand{\rmP}{\mathrm{P}}
\newcommand{\rmS}{\mathrm{S}}
\newcommand{\rmU}{\mathrm{U}}
\newcommand{\rmZ}{\mathrm{Z}}
\newcommand{\bbC}{\mathbb{C}}
\newcommand{\bbE}{\mathbb{E}}
\newcommand{\bbF}{\mathbb{F}}
\newcommand{\bbK}{\mathbb{K}}
\newcommand{\bbR}{\mathbb{R}}
\newcommand{\bbZ}{\mathbb{Z}}
\newcommand{\R}{\bbR}
\newcommand{\C}{\bbC}
\newcommand{\Z}{\bbZ}
\newcommand{\Gl}{\mathrm{Gl}}
\newcommand{\Eig}{\mathrm{Eig}}
\newcommand{\qmq}[1]{\quad\mbox{#1}\quad}
\newcommand{\Menge}[2]{\big\{\,#1\,\big|\,#2\,\big\}}
\newcommand{\g}[2]{\langle #1,#2\rangle}
\newcommand {\Kovv}[3]{{\nabla^{\scriptscriptstyle{#1}}}_{\!#2}#3}
\newcommand{\hdisp}[3]{\overset{#2}{\underset{#1}{\parallel}}\!\!#3\,}
\newcommand{\sst}{\scriptscriptstyle}
\newcommand{\ghdisp}[4]{(\hdisp{#1}{#2}{#3})^{\sst{#4}}}
\newcommand{\so}{\mathfrak{so}}
\newcommand{\gl}{\mathfrak{gl}}
\newcommand{\osc}{\scrO}
\newcommand{\Id}{\mathrm{Id}}
\newcommand{\Iso}{\mathrm{I}}
\newcommand{\Hol}{\mathrm{Hol}}
\newcommand{\trace}{\mathrm{trace}}
\renewcommand{\i}{\mathrm{i}}
\newcommand{\Spann}[2]{\big\{\,#1\,\big|\,#2\,\big\}_{\scriptstyle\R} }\,
\newcommand{\hol}{\mathfrak{hol}}
\newcommand{\fetth}{\boldsymbol{h}}
\newcommand{\Ad}{\mathrm{Ad}}
\newcommand{\Hom}{\mathrm{Hom}}
\begin{document}

\title{Parallel submanifolds with an intrinsic product structure}

\author{Tillmann Jentsch}
\date{\today}
\maketitle

\abstract{Let $M$ and $N$ be Riemannian symmetric spaces and $f:M\to N$ be a parallel
isometric immersion. We additionally assume that there exist 
simply connected, irreducible Riemannian symmetric spaces $M_i$ with
$\dim(M_i)\geq 2$ for $i=1,\ldots,r$ such that $M\cong M_1\times\cdots\times M_r$\,. 
As a starting point, we describe how the intrinsic product structure of $M$ is
reflected by a distinguished, fiberwise orthogonal direct sum
decomposition of the corresponding first normal bundle.
Then we consider the (second) osculating bundle $\osc f$\,, which is a $\nabla^N$-parallel vector subbundle of the pullback
bundle $f^*TN$\,, and establish the existence of $r$ distinguished,
pairwise commuting, $\nabla^N$-parallel vector bundle involutions on $\osc f$\,.
Consequently, the ``extrinsic holonomy Lie algebra'' of $\osc f$ bears
naturally the structure of a graded Lie algebra over the Abelian group which is given by the
direct sum of $r$ copies of $\Z/2\,\Z$\,. Our main result is the following: Provided that $N$ is of
compact or non-compact type, that $\dim(M_i)\geq 3$
for $i=1,\ldots,r$ and that none of the product slices through one 
point of $M$ gets mapped into any flat of $N$\,, we can show that $f(M)$ is a homogeneous submanifold of $N$\,.}


\section{Introduction}
Given a Riemannian symmetric space $N$ (briefly called ``symmetric space'') and an isometric immersion  $f:M\to
N$\,,  we let $TM$\,, $TN$\,, $\bot f$\,, $h:TM\times TM\to\bot f$ and $S:TM\times\bot f\to TM$ denote the tangent bundles
of $M$ and $N$\,, the normal bundle, the second
fundamental form and the shape operator of $f$\,, respectively. Then $TM$ and
$TN$ are equipped with the Levi Civita connections $\nabla^M$ and
$\nabla^N$\,, respectively, whereas on $\bot f$ there is the usual normal connection $\nabla^\bot$
(obtained through projection).
The equations of Gau{\ss} and Weingarten state that
\begin{equation}\label{eq:GW}
\Kovv{N}{X}{Tf\,Y}=Tf(\Kovv{M}{X}{Y})+h(X,Y)\ \text{and}\ \Kovv{N}{X}{\xi}=-Tf(S_\xi(X))+\Kovv{\bot}{X}{\xi}
\end{equation}
for all $X,Y\in \Gamma(TM),\xi\in\Gamma(\bot f)$\,. On the vector bundle $\rmL^2(TM,\bot f)$ there is a connection induced by
$\nabla^M$ and $\nabla^\bot$ in a natural way, often called ``Van der
Waerden-Bortolotti connection'', characterized as follows: {\em For every curve $c:\R\to M$\,, every parallel section $b:\R\to \rmL^2(TM,\bot f)$ along $c$ and any two parallel sections
 $X,Y:\R\to TM$ along $c$\,, the curve $t\mapsto b(X(t),Y(t))$ is a parallel
 section of $\bot f$ along $c$\,.}

\sloppypar
\begin{definition}\label{de:parallel} $f$ is called
 \emph{parallel} if its second
fundamental form $h$ is a parallel section of
the vector bundle $\rmL^2(TM,\bot M)$\,.
\end{definition}

\begin{example}[``Extrinsic Circles'', see~\cite{NY}]\label{ex:circles}
A unit speed curve $c:J\to N$ is parallel if and only if it satisfies the equation
\begin{equation}
    \label{eq:circles}
\nabla^N_\partial\nabla^N_\partial\dot c=-\kappa^2 \dot c
\end{equation}
for some constant $\kappa\in\R$\,. 
\end{example}
Surprisingly, not much is known so far about parallel isometric immersions in
general. Only the special case of the ``symmetric 
submanifolds'' (in the sense of~\cite{F2},~\cite{N2} and~\cite{St}) 
of the symmetric spaces is perfectly well understood (cf.~\cite[Ch.~9.3]{BCO}). 

At a first glance, one could think that parallel submanifolds are simply the extrinsic 
analogue of symmetric spaces; remember that the latter ones are characterized by the condition $\nabla
R=0$\,. However, this comparison is a bit shortcoming; for example, not every complete parallel submanifold is 
a symmetric submanifold unless $N$ is a space form (cf.~\cite[Prop.~1]{J1}).
In fact, the discrepancy between the intrinsic and the extrinsic
situation is even wider: Whereas every symmetric space is intrinsically a homogenous space, 
complete parallel submanifolds of $N$ are not necessarily homogeneous
submanifolds (see Section~\ref{se:2.6}) unless $N$ is Euclidian or a rank-1 symmetric space.

\begin{definition}[{\cite[A.~1]{BCO}}]
\label{de:deRham}
Let $M$ be a simply connected symmetric space. 
\begin{enumerate}
\item $M$ is called
``reducible'' if it is the Riemannian product of two
Riemannian spaces of dimension at least 1, respectively; otherwise $M$ is
called ``irreducible''.
\item There exists a Euclidian space $M_0$ and simply connected, irreducible symmetric spaces $M_i$ with
$\dim(M_i)\geq 2$ for $i=1,\ldots,r$ such that $M$
is isometric to the Riemannian product $M_0\times\cdots\times M_r$\,. In this
case, the ``factors'' $M_i$ are uniquely determined (up to isometry, respectively,
and up to a permutation of $\{M_1,\ldots,M_r\}$) by means of the ``de\,Rham
Decomposition Theorem'', and $M_0\times\cdots\times M_r$ is called  the ``de\,Rham
decomposition'' of $M$\,.
\item 
In the situation of~(b), we say that $M$ has no Euclidian factor 
if $M_0$ is trivial; then $M_1\times\cdots\times M_r$ is called  the ``de\,Rham
decomposition'' of $M$\,.
\end{enumerate}
\end{definition}

Let a parallel isometric immersion $f:M\to N$ be given. It is well known that
then $M$ is necessarily a locally symmetric space (cf.~\cite[Prop.~4]{J1}); 
in fact, we can even assume that $M$ is a simply connected (globally)
symmetric space.\footnote{\label{foot:JR}
According to~\cite[Thm.~7]{JR}, for every (not
necessarily complete) parallel submanifold $M_{loc}\subset N$ there exists a simply connected
Riemannian symmetric space $M$\,, a parallel isometric immersion $f:M\to N$ and an open subset $U\subset M$\,,
such that $f|U:U\to M_{loc}$ is covering.} 
Here we consider the following more specific situation: \textbf{\boldmath
Throughout this article, we assume that $M$ and $N$ both are symmetric spaces,
that $M$ is additionally simply connected and without Euclidian factor and that
$f:M\to N$ is a parallel isometric immersion.}

\begin{example}\label{ex:ExtrinsicSplitting}
Suppose that there exist symmetric spaces $N_i$ and parallel isometric immersions
$f_i:M_i\to N_i$ from simply connected, irreducible symmetric spaces
$M_i$ with $\dim(M_i)\geq 2$ for $i=1,\ldots,r$\,. Then $M:=M_1\times\cdots\times M_r$ and
$N:=N_1\times\cdots\times N_r$ both are symmetric spaces, $M$ has no
Euclidian factor and the direct product map $f:=f_1\times\cdots\times f_r:M\to
N$ is a parallel isometric immersion, too\,.
\end{example}
But there are also examples which do not fit into the scheme of Example~\ref{ex:ExtrinsicSplitting}: 

\begin{example}[The Segre embedding]\label{ex:Segre1}
Let positive integers $l,m,n$ with $n+1=(l+1)(m+1)$ be given and consider the parallel
isometric immersion $f:\C\rmP^l\times\C\rmP^m\to\C\rmP^n$\,,
$$
([z_0:\cdots :z_l],[w_0:\cdots :w_m])\mapsto [z_0w_0:z_0w_1:\cdots
:z_lw_m]\ \text{(all possible combinations)}\;,
$$
also known as the ``Segre embedding'' (cf.~\cite[p.~260]{BCO}). 
Note that $\C\rmP^n$ is an irreducible symmetric space.
\end{example}
We are especially interested in the question how the (purely intrinsic)
product structure of $M$ influences the extrinsic geometry of $f$\,.

\subsection*{This article is organized as follows:} 
The precise statement of our results and the required notation is given in Section~\ref{se:overview};
the main result of this article is Thm.~\ref{th:MainResult} from
Section~\ref{se:2.6}. The corresponding proofs are given in the subsequent
sections. The appendix provides some results on representations of Lie groups
and Lie algebras; in particular, there we discuss the isotropy representations of
symmetric spaces and their ``extrinsic tensor products''.

\section{Overview}
\label{se:overview}
Throughout this section, we always assume that $M$ and $N$ both are symmetric
spaces, that $M$ is additionally simply connected and without Euclidian factor
and that $f:M\to N$ is a parallel isometric immersion.
In the following, we implicitly identify $M$ with its de\,Rham
decomposition $M_1\times\cdots\times M_r$ (by means of some fixed isometry $M\to M_1\times\cdots\times M_r$).
Let $L_i\subset M$ be the canonical foliation (whose leafs 
are the various product slices $L_i(p):=\{p_1\}\times\cdots\times
M_i\times\cdots\times\{p_r\}$ through $p=(p_1,\cdots,p_r)\in M_1\times\cdots\times M_r$) and
$D^i:=TL_i$ for $i=1,\ldots,r$ be the corresponding
distribution of $M$\,.\footnote{Note that both $L_i$ and $D^i$ are in fact ``intrinsically defined'', i.e.\ they do not depend on
the chosen isometry $M\to M_1\times\cdots\times M_r$\,.}
Then all product slices of $M$ are simply connected, irreducible symmetric spaces. 

\subsection{The canonical decomposition of the first normal bundle}
\label{se:2.1}
We introduce the vector subbundles of $\bot f$ which are given by 
\begin{align}\label{eq:FirstNormalBundle}
  &\bbF:=\underset{p\in M}\bigcup\Spann{h(x,y)}{x,y\in T_pM}\;,\\
\label{eq:FirstNormalBundle1}
&\bbF^{ij}:=\underset{p\in M}\bigcup\Spann{h(x,y)}{(x,y)\in D^i_p\times
  D^j_p}\ \ \ \ \text{for}\ i,j= 1,\ldots,r\;,\\
\label{eq:FirstNormalBundle2}
&\tilde\bbF:=\sum_{i=1}^r\; \bbF^{ii}\;.
\end{align}
$\bbF$ is usually called the ``first normal bundle'' of $f$\,. 
Obviously, Eq.~\eqref{eq:FirstNormalBundle}-\eqref{eq:FirstNormalBundle2}
define parallel vector subbundles of $\bot f$\,; in particular, $\bbF$ is equipped with $\nabla^\bot$ (through restriction). 

Furthermore, let $\bbF^\sharp$ denote the maximal flat subbundle of
$\bbF$ and $\bbF^i_p\subset\bbF^{ii}_p$ denote the orthogonal complement of
$\bbF^\sharp_p\cap\bbF^{ii}_p$ in $\bbF^{ii}_p$ for each $p\in M$\,.

\begin{theorem}\label{th:decomposition}
\begin{enumerate}
\item The linear spaces $\bbF^{ij}_p$ and
$\bbF^{kl}_p$ are pairwise orthogonal for each $p\in M$ and $i,j=1,\ldots,r$ with  $i\neq j,\{i,j\}\neq
\{k,l\}$\,, and
\begin{align}\label{eq:FFF}
&\bbF=\tilde \bbF\oplus\bigoplus_{1\leq i<j\leq k}\bbF^{ij}\end{align}
is a fiberwise orthogonal decomposition into $\nabla^\bot$-parallel vector subbundles.
\item The linear spaces $\bbF^\sharp_p$ and
$\bbF^i_p$ are pairwise orthogonal for each $p\in M$ and $i=1,\ldots,r$\,.
The same is true for $\bbF^i_p$ and $\bbF^j_p$ with $i\neq j$\,.
Furthermore, $\bbF^\sharp_p\subset\tilde\bbF_p$ for each $p\in M$ and 
$$
\tilde\bbF=\bbF^\sharp\oplus\bigoplus_{i=1}^r\bbF^i
$$
is a fiberwise orthogonal decomposition into $\nabla^\bot$-parallel vector subbundles, too.
\end{enumerate}
\end{theorem}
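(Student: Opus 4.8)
The plan is to reduce everything to the behavior of the second fundamental form $h$ on the building blocks $D^i \times D^j$ and then exploit the de\,Rham decomposition of the \emph{intrinsic} curvature together with the Gau\ss\ equation. First I would record the pointwise linear-algebra facts. Fix $p \in M$ and write $T_pM = \bigoplus_i D^i_p$\,. Since $h$ is bilinear and symmetric, $\bbF_p$ is spanned by the subspaces $\bbF^{ij}_p = \mathrm{span}\{h(x,y) : x \in D^i_p, y \in D^j_p\}$ for $i \le j$\,, which already gives $\bbF = \tilde\bbF + \sum_{i<j}\bbF^{ij}$ as a (not yet orthogonal) sum of parallel subbundles; parallelism of each summand is immediate because $h$ is parallel and each $D^i$ is a parallel distribution (it is the tangent distribution to a de\,Rham factor, hence $\nabla^M$-parallel). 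The real content of (a) is the \emph{orthogonality} of $\bbF^{ij}_p$ against $\bbF^{kl}_p$ when $i \ne j$ and $\{i,j\} \ne \{k,l\}$\,. Here I would use the Codazzi equation in the symmetric-space setting: for a parallel immersion $(\Kovv{M}{X}{h})(Y,Z) = 0$, which turns the Gau\ss\ equation into a statement purely about the shape operators, namely $R^N$ restricted to $TM$ decomposes as $R^M$ plus the $h$-term $\g{h(X,W)}{h(Y,Z)} - \g{h(X,Z)}{h(Y,W)}$\,. Now feed in that the intrinsic curvature $R^M$ of a Riemannian product \emph{kills mixed pairs}: $R^M(x,y) = 0$ whenever $x \in D^i_p$, $y \in D^j_p$ with $i \ne j$\,, and more generally $\g{R^M(x,y)z}{w} = 0$ unless the four vectors pair up within a single factor. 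Applying this with suitable choices of $x,y,z,w$ lying in various $D^a_p$ and reading off the $h$-bilinear terms produces exactly the orthogonality relations $\g{h(x,y)}{h(z,w)} = 0$ for the mixed blocks; this is the step I expect to be the main obstacle, because one must be careful to extract the right scalar identities from Gau\ss--Codazzi and to check that $R^N$ applied to tangent vectors of $M$ — which is \emph{not} a priori product-like — does not obstruct the vanishing. The point is that one only ever compares $R^N(x,y)z$ for $x \in D^i, y \in D^j, z \in D^k$ with $x,z$ or $y,z$ forced into a common factor after using symmetries, so the relevant $R^N$-terms cancel in pairs or are themselves controlled by $R^M$-terms that vanish.

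Granting the orthogonality relations, the direct-sum decomposition in (a) follows: the sum $\tilde\bbF_p + \sum_{i<j}\bbF^{ij}_p$ is orthogonal once we know $\tilde\bbF_p = \sum_i \bbF^{ii}_p \perp \bbF^{ij}_p$ for $i<j$ and $\bbF^{ij}_p \perp \bbF^{kl}_p$ for $\{i,j\}\ne\{k,l\}$\,, both of which are instances of the relations just proved (the first by taking $k=l$); and each summand is $\nabla^\bot$-parallel as already noted. For (b) I would work inside the single bundle $\tilde\bbF = \sum_i \bbF^{ii}$\,. The flat subbundle $\bbF^\sharp$ of $\bbF$ — the maximal subbundle on which the curvature-type endomorphisms $S_\xi S_\eta - S_\eta S_\xi$ vanish, or equivalently the analogue of the "flat part" in the relevant holonomy decomposition — is $\nabla^\bot$-parallel by its maximality/canonical definition. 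The claim $\bbF^\sharp_p \subset \tilde\bbF_p$ should come from the fact that any flat normal direction $\xi$ has $S_\xi$ commuting with the product structure in a way that confines $\xi$ to the "diagonal" blocks $\bbF^{ii}$; concretely, if $\xi$ had a nonzero component in some $\bbF^{ij}_p$ with $i\ne j$ one could use the mixed second fundamental form together with the already-established orthogonality to exhibit a nonzero curvature endomorphism, contradicting flatness.

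Finally, the orthogonality of $\bbF^i_p$ against $\bbF^j_p$ for $i\ne j$ and against $\bbF^\sharp_p$: by definition $\bbF^i_p$ is the orthogonal complement of $\bbF^\sharp_p \cap \bbF^{ii}_p$ inside $\bbF^{ii}_p$, so $\bbF^i_p \perp \bbF^\sharp_p$ needs the extra input that $\bbF^\sharp_p \subset \tilde\bbF_p = \bigoplus_i \bbF^{ii}_p$ \emph{respects} this diagonal splitting, i.e.\ $\bbF^\sharp_p = \bigoplus_i (\bbF^\sharp_p \cap \bbF^{ii}_p)$; this in turn follows because the splitting $\tilde\bbF_p = \bigoplus_i \bbF^{ii}_p$ is orthogonal (a special case of (a)) and $\bbF^\sharp_p$ is invariant under the $\nabla^\bot$-parallel projections onto the $\bbF^{ii}_p$ (these projections being parallel, they preserve the parallel subbundle $\bbF^\sharp$). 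With that, $\bbF^i_p \perp \bbF^\sharp_p$ is automatic, and $\bbF^i_p \perp \bbF^j_p$ for $i\ne j$ is just the orthogonality $\bbF^{ii}_p \perp \bbF^{jj}_p$ from (a). Parallelism of each $\bbF^i$ follows since it is obtained from the parallel bundles $\bbF^{ii}$ and $\bbF^\sharp$ by fiberwise orthogonal-complement and intersection operations that commute with $\nabla^\bot$\,. This assembles into the stated orthogonal, $\nabla^\bot$-parallel decomposition $\tilde\bbF = \bbF^\sharp \oplus \bigoplus_i \bbF^i$\,.
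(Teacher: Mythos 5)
Your argument for the crucial orthogonality in (a) does not go through. The Gau{\ss} equation only controls the antisymmetrized combination
\begin{equation*}
\g{R^N(x,y)\,z}{w}-\g{R^M(x,y)\,z}{w}=\g{h(x,w)}{h(y,z)}-\g{h(x,z)}{h(y,w)}\;,
\end{equation*}
and for $x\in D^i_p$, $y\in D^j_p$ with $i\neq j$ the vanishing of $R^M(x,y)$ still leaves the ambient term $\g{R^N(x,y)\,z}{w}$, which is \emph{not} zero in general (the curvature of $N$ restricted to $T_pM$ is not product-like); so you never isolate a single inner product $\g{h(x,y)}{h(z,w)}$. You acknowledge this as the main obstacle, but the assertion that the relevant $R^N$-terms ``cancel in pairs or are controlled by $R^M$'' is exactly what would need proof, and no mechanism is offered. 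A telling symptom: your sketch uses only that the $D^i$ are parallel and that mixed $R^M$ vanishes, never that each factor is irreducible with $\dim(M_i)\geq 2$ -- but these hypotheses are essential. The paper's proof is representation-theoretic, not curvature-theoretic: $\bbF$ is a homogeneous vector bundle whose canonical connection is $\nabla^\bot$ (Prop.~\ref{p:HomogeneousVectorbundle}, Lemma~\ref{le:CanonicalConnection}), $K_i$ acts trivially on $\bbF^{kl}_p$ when $i\notin\{k,l\}$, while $\bbF^{ij}_p$ is a surjective equivariant image of $D^i_p\otimes D^j_p$ and hence a sum of non-trivial irreducible $K_i$-modules (this is where irreducibility and $\dim(M_i)\geq 2$ enter); Schur's lemma then forces the orthogonality. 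The same mechanism, via simple connectedness identifying $\bbF^\sharp_p$ with the maximal $K$-trivial subspace of $\bbF_p$, yields part (b); your alternative justification of $\bbF^\sharp_p\subset\tilde\bbF_p$ (``exhibit a nonzero curvature endomorphism'') is again only an assertion.

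There is also a genuine misreading in your part (b): you invoke ``$\bbF^{ii}_p\perp\bbF^{jj}_p$ from (a)'' both to split $\bbF^\sharp_p$ along the diagonal blocks and to conclude $\bbF^i_p\perp\bbF^j_p$. But part (a) asserts orthogonality of $\bbF^{ij}_p$ and $\bbF^{kl}_p$ only when $i\neq j$, i.e.\ when at least one block is mixed; the diagonal blocks $\bbF^{ii}_p$ and $\bbF^{jj}_p$ are \emph{not} orthogonal in general (their flat parts, e.g.\ the mean-curvature directions of the factors, may overlap), which is precisely why the theorem introduces $\bbF^\sharp$ and only claims orthogonality of the reduced spaces $\bbF^i_p$. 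In the paper this step is handled by decomposing $\bbF^{ii}_p$ into its $K_i$-trivial part (which is shown to equal $\bbF^\sharp_p\cap\bbF^{ii}_p$, using that $K_j$ acts trivially on $\bbF^{ii}_p$ for $j\neq i$) and non-trivial irreducible $K_i$-modules, and then applying Schur's lemma once more; some replacement for that argument is needed, and the purely Gau{\ss}--Codazzi route you propose does not supply it.
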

A proof of this theorem is given in Section~\ref{se:3}.

\begin{corollary}\label{co:slice}
$f|L_i(p):L_i(p)\to N$ is a parallel isometric immersion for
each $p\in M$ and $i=1,\ldots,r$\,, too.
\end{corollary}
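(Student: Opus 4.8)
The plan is to combine the fact that every product slice $L:=L_i(p)$ is a \emph{totally geodesic} submanifold of $M$ (as is any slice of a Riemannian product) with the orthogonality relations in Theorem~\ref{th:decomposition}(a). Since $L$ is totally geodesic in $M$, two things happen: first, for any curve $c:\R\to L$, a vector field along $c$ tangent to $L$ is parallel with respect to $\nabla^L$ if and only if it is parallel with respect to $\nabla^M$; second, comparing the Gau{\ss} equation~\eqref{eq:GW} for $f$ with the Gau{\ss} equation for the composition $L\hookrightarrow M\xrightarrow{\,f\,}N$ and using $\nabla^M_XY=\nabla^L_XY$ for $X,Y$ tangent to $L$, one sees that the second fundamental form $h_L$ of $f|L$ is just the restriction of $h$ to $D^i\times D^i$, taking values in $\bot f\subseteq\bot(f|L)$ (this inclusion is legitimate because $\bot f$ is orthogonal to all of $Tf(TM)\supseteq Tf(TL)$).

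By Definition~\ref{de:parallel} (and the description of the Van der Waerden-Bortolotti connection preceding it), it therefore suffices to show: for every curve $c:\R\to L$ and all vector fields $X,Y$ along $c$ that are parallel with respect to $\nabla^L$, the curve $t\mapsto h(X(t),Y(t))$ is parallel along $c$ with respect to the normal connection $\nabla^{\bot_L}$ of $f|L$. Fix such $c,X,Y$ and set $\xi:=h(X,Y)$. By the first observation above, $X$ and $Y$ are $\nabla^M$-parallel along $c$ (and remain tangent to $L$, i.e.\ they are sections of $D^i$ along $c$), and $\dot c$ is a section of $D^i$ as well; since $f$ is parallel, $\xi$ is then $\nabla^\bot$-parallel along $c$, so the Weingarten equation~\eqref{eq:GW} for $f$ yields $\nabla^N_\partial\xi=-Tf(S_\xi\dot c)$.

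The key point is that $S_\xi\dot c$ is already tangent to $L$. Indeed, for $j\neq i$ and any $Z\in D^j_{c(t)}$ we have $\langle S_\xi\dot c,Z\rangle=\langle h(\dot c,Z),\xi\rangle$; here $h(\dot c,Z)\in\bbF^{ij}$ and $\xi\in\bbF^{ii}$, so this scalar product vanishes by Theorem~\ref{th:decomposition}(a) (applied with $(k,l)=(i,i)$, which is admissible since $\{i,j\}\neq\{i\}$ for $i\neq j$). Hence $S_\xi\dot c\in D^i_{c(t)}=T_{c(t)}L$, so $\nabla^N_\partial\xi=-Tf(S_\xi\dot c)$ is tangent to $Tf(TL)$; but $\nabla^{\bot_L}_\partial\xi$ is by definition the component of $\nabla^N_\partial\xi$ orthogonal to $Tf(TL)$, so $\nabla^{\bot_L}_\partial\xi=0$. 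Since $h_L=h|_{D^i\times D^i}$, this shows that $h_L$ is parallel, i.e.\ that $f|L$ is a parallel isometric immersion.

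I do not expect a genuine obstacle here; the corollary is essentially a direct consequence of Theorem~\ref{th:decomposition}. The only thing that needs care is to keep apart the normal connection $\nabla^{\bot_L}$ of $f|L$ and the restriction of $\nabla^\bot$ to the $\nabla^\bot$-parallel subbundle $\bbF^{ii}$: their difference is governed precisely by the $Tf(D^j)$-components ($j\neq i$) of the shape operators $S_\xi$ with $\xi\in\bbF^{ii}$, and these vanish exactly because $\bbF^{ij}$ and $\bbF^{ii}$ are mutually orthogonal whenever $i\neq j$.
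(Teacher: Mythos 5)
Your proof is correct and follows essentially the same route as the paper: identify the second fundamental form of $f|L_i(p)$ with $h|_{D^i\times D^i}$ via total geodesy, and use the orthogonality $\bbF^{ii}\bot\bbF^{ij}$ ($j\neq i$) from Theorem~\ref{th:decomposition} to see that $S_\xi$ maps $D^i$ into $D^i$ for $\xi\in\bbF^{ii}$, so that the normal connection of $f|L_i(p)$ agrees with $\nabla^\bot$ on the relevant sections. The paper packages this last step as Lemma~\ref{le:hilf1}~(b) and states the coincidence of the two normal connections on $\bbF^{ii}|L_i(p)$ in general, whereas you verify it inline for the parallel sections $h(X,Y)$; the content is the same.
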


For a proof of this corollary see Section~\ref{se:3}.

\begin{example}[Continuation of Example~\ref{ex:Segre1}]\label{ex:Segre2}
Let $f:\C\rmP^l\times\C\rmP^m
\to\C\rmP^n$ be the ``Segre embedding''. Here all the product slices are totally geodesic submanifolds of $\C\rmP^n$\,,
hence $\tilde\bbF=0$ and thus $\bbF=\bbF^{1,2}$\,. Furthermore, $f$ is
``1-full'', i.e. $\bbF=\bot f$ and therefore $\dim(\bbF_p)=2\,m\,l$ for each $p\in \C\rmP^l\times\C\rmP^m$\,. 
\end{example}
\subsection{The $\scrA$-gradation on $\so(\osc_pf)$}
\label{se:2.2}
In the following, $T_pM$ is seen as a linear subspace of $T_{f(p)}N$ by
means of the injective map $T_pf:T_pM\to T_{f(p)}$ for each $p\in M$\,.
Then $V:=T_pM\oplus\bbF_p$ is also a subspace of $T_{f(p)}N$\,, usually 
called the (second) ``osculating space'' at $p$\,.
For each subspace $W\subset V$ we let $\sigma^W:V\to V$ denote 
the reflection in $W^\bot$\,, i.e. $\sigma^W|W=\Id$\,, $\sigma^W|W^\bot=-\Id$\,. Then the induced map $\Ad(\sigma^W):\so(V)\to\so(V),A\mapsto \sigma^W\circ A\circ\sigma^W$
is a linear involution on $\so(V)$\,. 
More specifically, let $i\in\{1,\ldots,r\}$ be given, consider the subspace of
$V$ which is given by
$$
D^i_p\oplus\bigoplus_{j\neq i}\bbF^{ij}_p
$$ 
and let $\sigma^i\in\rmO(V)$ denote the corresponding reflection. Then, according to Thm.~\ref{th:decomposition}, 
\begin{align}\label{eq:sigma1}
\forall x\in D^i_p:\;\sigma^i(x)&=-x\;,\\
\forall j\neq i\,,x\in D^j_p:\;\sigma^i(x)&=x\;,\\
\label{eq:sigma3}
\forall j\neq i\,,\xi\in \bbF^{ij}_p:\;\sigma^i(\xi)&=-\xi\;,\\
\forall \xi\in\tilde\bbF_p:\;\sigma^i(\xi)&=\xi\;,\\
\label{eq:sigma5}
\forall k\neq i,l\neq i\,,\xi\in\bbF^{kl}_p:\;\sigma^i(\xi)&=\xi\;.
\end{align}

Therefore, we have $\sigma^i(T_pM)=T_pM$\,, $\sigma^i(\bbF_p)=\bbF_p$ and
\begin{align}\label{eq:sigma2}
\forall x,y\in T_pM,i,j=1,\ldots,r:\;h(\sigma^i\,x,\sigma^i\,y)=\sigma^i\,h(x,y)\;.
\end{align}

Furthermore, $\sigma^i\circ\sigma^j|T_pM=\sigma^j\circ\sigma^i|T_pM$\,, hence, by the above, 
$\sigma^i\circ\sigma^j\,h(x,y)=\sigma^j\circ\sigma^i\,h(x,y)$ for
all $x,y\in T_pM$\,, thus $\sigma^i\circ
\sigma^j|\bbF_p=\sigma^j\circ\sigma^i|\bbF_p$ also holds, and therefore we even have
\begin{align}\label{eq:Vertauschen}
\forall i,j=1,\ldots,r:\;\sigma^i\circ \sigma^j=\sigma^j\circ\sigma^i\,.
\end{align}

\begin{definition}
Let $\scrA$ denote the Abelian group whose elements are the functions
$\delta:\{1,\ldots,r\}\to\{0,1\}$ and whose group structure is
given by $(\delta+\epsilon)\,(i):=\delta(i)+\epsilon(i)\ \mathrm{mod}\;2\,\Z$ for all
$\delta,\epsilon\in\scrA$ and $i=1,\ldots,r$\,.
Clearly, $$\scrA\cong\bigoplus_{i=1}^r\Z/2\,\Z\;.$$ 
\end{definition}

Let $\Ad:\rmO(V)\to\Gl(\so(V))$ denote the adjoint representation 
and $\Eig(\Ad(\sigma^i),\lambda)$ denote the corresponding
Eigenspace for each $\lambda\in\{-1,1\}$ and $i=1,\ldots,r$\,. By the above, 
$\{\Ad(\sigma_i)\}_{i=1,\ldots,r}$ is a commuting family of linear involutions on
$\so(V)$\,, hence $\so(V)_\delta:=\underset{i=1,\ldots,r}\bigcap
\Eig(\Ad(\sigma^i),(-1)^{\delta(i)})$ is a common Eigenspace of these involutions
for each $\delta\in \scrA$ and there is the splitting

\begin{equation}\label{eq:SplittingIntoEigenspaces}
\so(V)=\bigoplus_{\delta\in \scrA}\so(V)_{\delta}\;.
\end{equation}
Then

\begin{align}\label{eq:rules}
[\so(V)_{\delta},\so(V)_{\epsilon}]\subset\so(V)_{\delta+\epsilon}\ \text{for all}\ \delta,\epsilon\in \scrA\;.
\end{align}

In other words, $\so(V)$ carries the structure of an $\scrA$-graded Lie
algebra. If $A\in\so(V)_\delta$\,, then $A$ is called ``homogeneous'' and $\delta$ is called its ``degree''.

\begin{remark}\label{re:finer}
Let $\sigma:V\to V$ denote the reflection in the first normal space
$\bbF_p$ and $\so(V)_\pm$ denote the $\pm\,1$-eigenspace of
$\Ad(\sigma)$\,; hence $\so(V)=\so(V)_+\oplus\so(V)_-$\,. In this way,
$\so(V)$ may be also seen as a $\Z/2\,\Z$-graded Lie algebra.
As a consequence of~Eq.~\eqref{eq:sigma1}-\eqref{eq:sigma5}, \begin{equation}
\label{eq:factorization}\sigma=\sigma^1\circ\cdots\circ\sigma^r\;,
\end{equation}
thus 
\begin{equation}\label{eq:TotalDegree}
\scrA\to\{0,1\}\;, \delta\mapsto|\delta|:=\sum_{i=1}^r
\delta(i)\ \ \mathrm{mod}\;2\,\Z
\end{equation}
is a group homomorphism. Furthermore,
\begin{align}
    \label{eq:Even}
&\so(V)_+=\bigoplus_{|\delta|=0}\so(V)_\delta\;,\\
\label{eq:Odd}
&\so(V)_-=\bigoplus_{|\delta|=1}\so(V)_\delta\;.
\end{align}
 Hence, the $\scrA$-gradation is ``finer'' than the $\Z/2\,\Z$-gradation.
\end{remark}
\subsection{Curvature invariance of the linear spaces $\tilde\bbF_p$ and
  $\bbF^{ij}_p$ with $i\neq j$}
\label{se:2.3}
According to~\cite[Prop.~7]{J1}, for each $p\in M$ and all $\xi\,,\eta\in\bbF_p$ the
curvature endomorphism $R^N(\xi,\eta):T_{f(p)}N\to T_{f(p)}N, v\mapsto R^N(\xi,\eta)\,v
$
has the following properties:
\begin{align}\label{eq:CurvatureInvariance}
&R^N(\xi,\eta)(V)\subset V\;,\\
\label{eq:Fundamental_2}
&R^N(\xi,\eta)|V\in \so(V)_+\;.
\end{align} 
Here we additionally have:

\begin{theorem}\label{th:Fundamental}
If $\xi,\eta\in\bbF^{ij}_p$  or $\xi,\eta\in\tilde\bbF_p$\,, then, besides Eq.~\eqref{eq:CurvatureInvariance}, 
\begin{align}
\label{eq:Fundamental3}
&R^N(\xi,\eta)|V\in \so(V)_0\;.
\end{align}
\end{theorem}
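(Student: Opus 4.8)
The plan is to show that for homogeneous curvature endomorphisms $R^N(\xi,\eta)|V$ with $\xi,\eta$ both lying in $\bbF^{ij}_p$ (or both in $\tilde\bbF_p$), the degree in the $\scrA$-gradation is forced to be $0$. Since we already know from \eqref{eq:Fundamental_2} that $R^N(\xi,\eta)|V\in\so(V)_+$, i.e.\ $|\delta|=0$, it suffices to pin down the degree componentwise: I would show that $\Ad(\sigma^k)\bigl(R^N(\xi,\eta)|V\bigr)=R^N(\xi,\eta)|V$ for every $k=1,\ldots,r$. The natural tool is the fact that each $\sigma^k$ is (as will presumably be established in the proof of Theorem~\ref{th:decomposition}, and is in any case already available to us as a black box here) a $\nabla^\bot$- and $\nabla^M$-compatible reflection realized by an isometry of $N$ fixing $f(p)$ — or, more elementarily, that $R^N$ is parallel and $V$, together with its decomposition, is built from $\nabla^N$-parallel data. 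Concretely I would use the identity $\sigma^k\circ R^N(\xi,\eta)\circ\sigma^k = R^N(\sigma^k\xi,\sigma^k\eta)$ on $V$, which holds because $\sigma^k$ extends to (the differential at $f(p)$ of) an isometry of $N$; this is the geometric heart of the argument.

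Granting that identity, the computation is immediate from the explicit action of $\sigma^k$ recorded in \eqref{eq:sigma1}--\eqref{eq:sigma5}. If $\xi,\eta\in\tilde\bbF_p$, then by \eqref{eq:sigma3}--\eqref{eq:sigma5} each $\sigma^k$ fixes $\tilde\bbF_p$ pointwise, so $\sigma^k\xi=\xi$, $\sigma^k\eta=\eta$, hence $\Ad(\sigma^k)\bigl(R^N(\xi,\eta)|V\bigr)=R^N(\xi,\eta)|V$ for all $k$, and therefore $R^N(\xi,\eta)|V\in\bigcap_k\Eig(\Ad(\sigma^k),1)=\so(V)_0$. If instead $\xi,\eta\in\bbF^{ij}_p$ with $i\neq j$, then for $k\notin\{i,j\}$ we again have $\sigma^k\xi=\xi$, $\sigma^k\eta=\eta$ by \eqref{eq:sigma5}, while for $k=i$ and for $k=j$ we get $\sigma^k\xi=-\xi$, $\sigma^k\eta=-\eta$ by \eqref{eq:sigma3}; in every case $R^N(\sigma^k\xi,\sigma^k\eta)=R^N(\pm\xi,\pm\eta)=R^N(\xi,\eta)$ by bilinearity, so once more $\Ad(\sigma^k)\bigl(R^N(\xi,\eta)|V\bigr)=R^N(\xi,\eta)|V$ for all $k$, giving $R^N(\xi,\eta)|V\in\so(V)_0$. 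In both cases \eqref{eq:CurvatureInvariance} from \cite[Prop.~7]{J1} already guarantees $R^N(\xi,\eta)(V)\subset V$, so the restriction $R^N(\xi,\eta)|V$ makes sense and lies in $\so(V)$, and we are done.

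The one point that needs genuine justification — and which I expect to be the main obstacle — is precisely the conjugation identity $\sigma^k\circ R^N(\xi,\eta)\circ\sigma^k=R^N(\sigma^k\xi,\sigma^k\eta)$ as maps $V\to V$. There are two routes. The clean route is to exhibit $\sigma^k$ as arising from an isometry $\phi_k$ of $N$ preserving $f(M)$ and fixing $f(p)$, whose differential at $f(p)$ restricts to $\sigma^k$ on $V$; then the identity is just naturality of curvature under isometries. Producing such a $\phi_k$ is plausible because $\sigma^k$ is the extrinsic realization of the reflection in the intrinsic symmetric space $M$ that acts as $-\Id$ on $D^i_p$ and $+\Id$ on the complementary slices (the product of the point symmetries of the factors $M_j$, $j\neq i$, composed appropriately), and parallel immersions respect such symmetries on the osculating space — this is in the spirit of \cite[Prop.~7]{J1} and the material the paper cites from \cite{JR}. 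The alternative, more self-contained route avoids global isometries: use that $R^N$ restricted to $V$ is determined, via Gau\ss--Codazzi--Ricci, by $h$, $S$, and the ambient curvature terms that survive on $V$; then \eqref{eq:sigma2} together with the $\sigma^i$-invariance of the decomposition of $\bbF_p$ already encodes the required equivariance of all these building blocks, and one checks that $R^N(\xi,\eta)|V$ inherits it. I would present the first route if the isometry $\phi_k$ is readily available from the earlier sections, and fall back to the second, bookkeeping-heavy route otherwise; either way the only real content is this equivariance, after which membership in $\so(V)_0$ is a one-line consequence of the definition of the $\scrA$-gradation.
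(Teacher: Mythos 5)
Your reduction of the theorem to the conjugation identity $\sigma^k\circ R^N(\xi,\eta)\circ\sigma^k=R^N(\sigma^k\,\xi,\sigma^k\,\eta)$ on $V$ is not really a reduction: for $\xi,\eta\in\bbF^{ij}_p$ (or in $\tilde\bbF_p$) one has $\sigma^k\,\xi=\pm\xi$ and $\sigma^k\,\eta=\pm\eta$ with equal signs by \eqref{eq:sigma1}--\eqref{eq:sigma5}, so the right-hand side is $R^N(\xi,\eta)$ and the identity literally asserts $R^N(\xi,\eta)|V\in\Eig(\Ad(\sigma^k),1)$, i.e.\ the statement to be proved. Everything therefore hinges on your justification of that identity, and neither of your two routes supplies one. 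The ``clean route'' needs an isometry $\phi_k$ of $N$ fixing $f(p)$ whose differential restricts to $\sigma^k$ on $V$; such an ambient isometry does not exist in general, since a parallel submanifold of a symmetric space need not be a symmetric submanifold (see the introduction and \cite{J1}), and what the paper actually constructs (later, in Thm.~\ref{th:Symmetries}) are $\nabla^N$-parallel vector bundle involutions $\Sigma^i_p$ of $\osc f$, not differentials of isometries of $N$. Nor can those involutions substitute for $\phi_k$: since the base map fixes $p$ and $\Sigma^i_p|\osc_pf=\sigma^i$, parallelism of $\Sigma^i_p$ only propagates an invariance of the restricted (parallel) curvature tensor from point to point, and at the base point it hands you back precisely the $\sigma^k$-invariance you are trying to establish -- the argument is circular.

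The fallback route is where the real work lies, and it is not carried out. The combined Gau{\ss}--Codazzi--Ricci equation \eqref{eq:Gauss-Ricci} controls $R^N(x,y)$ only for \emph{tangent} arguments $x,y\in T_pM$; it says nothing about $R^N(\xi,\eta)$ with both arguments in the first normal space, which is exactly what the theorem concerns. The missing ingredient is the second-derivative identity of \cite{J1} reproduced as \eqref{eq:ZweiteAbleitung}, which (using $R^N(h(x,y),h(y,x))=0$) yields \eqref{eq:Fundamental0} and \eqref{eq:Fundamental1}: these express $R^N(h(x_i,x_i),h(y_j,y_j))|V$ and $R^N(h(x_i,x_j),h(y_i,y_j))|V$ through iterated brackets $[\fetth(\cdot),[\fetth(\cdot),R^N(\cdot,\cdot)]]$ and curvature operators with tangent arguments, whose degrees in the $\scrA$-gradation are known from \eqref{eq:Degree1}, \eqref{eq:Degree2}, \eqref{eq:SimpleRelation1} and \eqref{eq:SimpleRelation2}, after which the grading rule \eqref{eq:rules} forces degree $0$. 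That computation is the actual content of the paper's proof; your proposal presupposes its conclusion inside the equivariance identity and offers no independent derivation of it, so there is a genuine gap.
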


\begin{corollary}\label{co:CurvatureInvariant}
$\tilde\bbF_p$ and $\bbF^{ij}_p$ are curvature invariant\footnote{\label{fn:Ci}A linear
  subspace $U\subset T_qN$ is called curvature invariant if $R^N(U\times U\times U)\subset U$ holds.}
subspaces of $T_{f(p)}N$ for each $p\in M$ and $i,j=1,\ldots,r$ with $i\neq j$\,.
\end{corollary}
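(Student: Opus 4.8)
The plan is to deduce this from Theorem~\ref{th:Fundamental} together with the eigenspace description of the summands $\tilde\bbF_p$ and $\bbF^{ij}_p$ that is implicit in Theorem~\ref{th:decomposition}. Fix $p\in M$, let $U$ denote either $\tilde\bbF_p$ or $\bbF^{ij}_p$ with $i\neq j$, and take $\xi,\eta\in U$. By Eq.~\eqref{eq:CurvatureInvariance} the endomorphism $R^N(\xi,\eta)$ of $T_{f(p)}N$ restricts to a skew-symmetric endomorphism $R:=R^N(\xi,\eta)|V\in\so(V)$, and by Theorem~\ref{th:Fundamental} (see Eq.~\eqref{eq:Fundamental3}) we even have $R\in\so(V)_0=\bigcap_{k=1}^r\Eig(\Ad(\sigma^k),1)$; equivalently, $R$ commutes with $\sigma^k|V$ for every $k=1,\ldots,r$.

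Next I would identify $U$ as a simultaneous eigenspace of the commuting family $\{\sigma^k|V\}_{k=1,\ldots,r}$. Using the orthogonal decomposition $V=T_pM\oplus\tilde\bbF_p\oplus\bigoplus_{a<b}\bbF^{ab}_p$ furnished by Theorem~\ref{th:decomposition}, in which every summand is $\sigma^k$-invariant, together with the explicit formulas Eq.~\eqref{eq:sigma1}--\eqref{eq:sigma5}, one checks that $\tilde\bbF_p$ is exactly the simultaneous $(+1)$-eigenspace of all the $\sigma^k|V$, whereas for $i\neq j$ the space $\bbF^{ij}_p$ is exactly the simultaneous eigenspace on which $\sigma^i$ and $\sigma^j$ act by $-\Id$ and every other $\sigma^k$ acts by $+\Id$. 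The only point needing a short argument is the reverse inclusion in each case, namely that the indicated simultaneous eigenspace is contained in $\tilde\bbF_p$, resp.\ $\bbF^{ij}_p$: running through the summands $D^k_p$, $\bbF^{ab}_p$ and $\tilde\bbF_p$, one sees that apart from the claimed one, each is excluded by one of the prescribed eigenvalue conditions.

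Granting this, the corollary follows at once: an endomorphism of $V$ that commutes with each member of a commuting family of involutions preserves every one of their simultaneous eigenspaces, so $R(U)\subseteq U$, and therefore $R^N(\xi,\eta)\zeta=R(\zeta)\in U$ for all $\xi,\eta,\zeta\in U$, which is precisely the curvature invariance of $U$. I do not expect a serious obstacle; the one place to be careful is to establish equality, not merely inclusion, in the eigenspace description of the previous paragraph, since this is exactly what upgrades Eq.~\eqref{eq:CurvatureInvariance} from $R(U)\subseteq V$ to $R(U)\subseteq U$. Conceptually, the statement is nothing but the $\scrA$-graded refinement of the already established fact (Eq.~\eqref{eq:Fundamental_2}) that $R^N(\xi,\eta)$ leaves both $T_pM$ and $\bbF_p$ invariant.
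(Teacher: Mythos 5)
Your argument is correct and follows essentially the same route as the paper: the paper's proof is exactly ``$R^N(\xi,\eta)|V\in\so(V)_0$ by Thm.~\ref{th:Fundamental}, and elements of $\so(V)_0$ preserve the summands of the decomposition'', the latter being recorded in Eq.~\eqref{eq:unit} of Lemma~\ref{le:delta-Funktionen}~(a). The only difference is that you re-derive the relevant content of that lemma by identifying $\tilde\bbF_p$ and $\bbF^{ij}_p$ as simultaneous eigenspaces of the commuting involutions $\sigma^k|V$, instead of citing it.
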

A proof of Thm.~\ref{th:Fundamental} and Cor.~\ref{co:CurvatureInvariant} is given in Section~\ref{se:4}.

\subsection{Geometry of the second osculating bundle}
\label{se:2.4}
In the following, we
suppress  the injective vector bundle homomorphism $Tf:TM\to f^*TN$\,;
for convenience, the reader may simply assume that $M$ is a submanifold of $N$
and $f=\iota^M$\,. Then the ``pull-back'' $f^*TN$ is the vector bundle  over
$M$ which is given by $TM\oplus\bot f$\,.

\begin{definition}
\label{de:split-parallel}
\begin{enumerate}
\item 
The \emph{split connection} is 
the linear connection $\nabla^{\mathrm{sp}}:=\nabla^M\oplus\nabla^\bot$\ on $f^*TN$\,.
\item For each $p\in M$ let $\fetth:T_pM\to\so(T_{f(p)}N)$ be the linear map
defined by 
\begin{equation}\label{eq:fetth}
\forall\,x,y\in T_pM\,,\xi\in\bot_pM:\;\fetth(x)(y+\xi):=-S_\xi x+h(x,y)\;.
\end{equation}

\item Let $L$ be a second Riemannian space and $g:L\to M$ be any map. Sections
  of $f^*TN$ along $g$ which are parallel with respect to $\nabla^N$ or
  $\nabla^{\mathrm{sp}}$ are called ``$\nabla^N$-parallel'' and   ``split-parallel'', respectively.
\end{enumerate}
\end{definition}

Now the equations of Gau{\ss} and Weingarten can be formally combined to
\begin{align}
&\forall\,X\in\Gamma(TM),S\in\Gamma(f^*(TN)):\;
\label{eq:first_Gauss}
\Kovv{N}{X}{S}=\Kovv{\mathrm{sp}}{X}{S}+\fetth(X)\,S\;.
\end{align}
In the same way, the combined Equations of Gau{\ss}, Codazzi and Ricci for the
curvature are given by
\begin{equation}\label{eq:Gauss-Ricci}
\forall x,y\in T_pM:\;R^N(x,y)=R^{\mathrm{sp}}(x,y) +[\fetth(x),\fetth(y)]\;,
\end{equation}
where $R^{\mathrm{sp}}$ denotes the curvature tensor of $\nabla^{\mathrm{sp}}$\,.
Furthermore, the {\em second osculating bundle} 
\begin{equation}\label{eq:oscf}
 \osc f:=TM\oplus\bbF
\end{equation}
is a parallel vector subbundle of  $f^*TN$ with respect to
both $\nabla^N$ and $\nabla^{\mathrm{sp}}$ (see~\cite[Prop.~6]{J1}). In particular,
\begin{equation}\label{eq:ParallelSubbundle}
\forall p\in M, x,y\in T_pM:\;R^N(x,y)(\osc_pf)\subset\osc_pf\;.
\end{equation}

\begin{definition}\label{de:vector_bundle_maps}
\begin{enumerate}
\item
Let $\bbE$ be a vector bundle over $M$ and $\Hom(\bbE)$ denote the
linear space of {\em vector bundle maps} \/on $\bbE$\,.
More precisely, a map $F:\bbE\to\bbE$ belongs to
$\Hom(\bbE)$ if and only if there exists a
``base map'' $\underline{F}:M\to M$ such that
the following diagram is commutative
$$\begin{CD}
\bbE @>F>> \bbE\\
@VVV @VVV \\
M @>\underline{F}>> M
\end{CD}
$$
and $F|\bbE_p:\bbE_p\to\bbE_{\underline{F}(p)}$ is a linear map for each $p\in M$\,. 
Then we also say that $F$ is a vector bundle map along $\underline{F}$\,.
\item
An invertible vector bundle map is also called a {\em vector bundle
  isomorphism}. If $F\in\Hom(\bbE)$ even satisfies $F\circ F=\Id$\,, then $F$
will be called a {\em vector bundle involution}.

\item Now assume that $\bbE$ is equipped with a linear connection
  $\nabla^\bbE$\,. A vector bundle map $F:\bbE\to\bbE$ along $\underline{F}$ is called
  parallel, if $F\circ S$ is a parallel section along the curve
  $\underline{F}\circ c$ for every  curve $c:\R\to M$ and every parallel section $S:\R\to\bbE$ along $c$\,.
\end{enumerate}
\end{definition}

Equipping $\osc f$ with $\nabla^N$ or $\nabla^{\mathrm{sp}}$\,, we thus obtain the
notion of $\nabla^N$-parallel and split-parallel vector bundle maps on $\osc f$\,,
respectively. Then it is clear what is meant by a $\nabla^N$-parallel vector bundle
involution on $\osc f$\,.

For each 
$p=(p_1,\ldots,p_r)\in M_1\times\cdots\times M_r$ 
let $\sigma^i_p$ be the direct product map on $M_1\times\cdots\times M_r$ which is given by 
\begin{equation}\label{eq:Basemap}
\sigma^i_p:=\Id_{M_1}\times\cdots\times\Id_{M_{i-1}}\times
  \sigma^{M_i}_{p_i}\times\Id_{M_{i+1}}\times\cdots\times\Id_{M_r}\;,
\end{equation} 
where $\sigma^{M_i}_{p_i}$ denotes the corresponding geodesic symmetry for $i=1,\ldots,r$\,.

\begin{theorem}\label{th:Symmetries}
For each $p\in M$ there exists a  family $\{\Sigma_p^i\}_{i=1,\ldots,r}$ of pairwise
commuting, $\nabla^N$-parallel vector bundle involutions on $\osc f$ , characterized as follows:
\begin{itemize}
\item The base map of $\Sigma_p^i$ is given by Eq.~\eqref{eq:Basemap},
\item $\sigma_p^i(p)=p$ holds and $\Sigma_p^i|\osc_pf$ is the reflection $\sigma^i$
  described by Eq.~\eqref{eq:sigma1}-\eqref{eq:sigma5}.
\end{itemize}
\end{theorem}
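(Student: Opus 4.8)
The plan is to construct each $\Sigma_p^i$ by parallel transport, using the fact that $\osc f$ is $\nabla^N$-parallel and that the product slices through $p$ carry the required symmetries. First I would fix $i\in\{1,\ldots,r\}$ and note that the base map $\sigma_p^i$ fixes $p$ and restricts to the geodesic symmetry $\sigma^{M_i}_{p_i}$ on the slice $L_i(p)$ while being the identity on every other slice direction. At the point $p$ itself, define $\Sigma_p^i|\osc_pf$ to be the reflection $\sigma^i$ of Eq.~\eqref{eq:sigma1}--\eqref{eq:sigma5}; this is a well-defined linear involution of $\osc_pf=T_pM\oplus\bbF_p$ preserving the decomposition of Thm.~\ref{th:decomposition}, and by Eq.~\eqref{eq:sigma2} it is compatible with $h$.

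Next I would propagate $\Sigma_p^i$ to all of $M$ by the standard symmetric-space trick: since $M$ is a simply connected symmetric space, any two points are joined by a broken geodesic, and $\osc f$ being $\nabla^N$-parallel means $\nabla^N$-parallel transport along curves in $M$ gives canonical identifications of the fibers. The candidate $\Sigma_p^i$ is declared to be the unique vector bundle map along $\sigma_p^i$ whose value at $p$ is $\sigma^i$ and which is $\nabla^N$-parallel; concretely, for $q\in M$ pick a curve $c$ from $p$ to $q$, and set $\Sigma_p^i|\osc_q f := (\text{$\nabla^N$-parallel transport along }\sigma_p^i\circ c)\circ\sigma^i\circ(\text{$\nabla^N$-parallel transport along }c)^{-1}$. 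One must check this is independent of the choice of $c$. The key ingredient here is that $\sigma_p^i$ itself is an isometry of $M$, and on each slice it is either the identity or a geodesic symmetry, so it is in particular a product of geodesic symmetries; combined with the fact that $\Sigma_p^i$ was built to intertwine the second fundamental form at $p$, one propagates the intertwining relation $h(\Sigma x,\Sigma y)=\Sigma h(x,y)$ along $c$ using parallelity of $h$ (Definition~\ref{de:parallel}) and of $\osc f$. The well-definedness then follows because the monodromy around a loop acts trivially: $M$ simply connected plus $\nabla^N$-parallelism of the relevant structures forces the holonomy obstruction to vanish.

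After well-definedness, the remaining assertions are checked directly. That $\Sigma_p^i$ is a vector bundle involution follows since $\sigma_p^i\circ\sigma_p^i=\Id_M$ and $\sigma^i\circ\sigma^i=\Id$, and parallel transport is functorial, so $\Sigma_p^i\circ\Sigma_p^i$ is the $\nabla^N$-parallel vector bundle map along $\Id_M$ with value $\Id$ at $p$, hence $\Id$. That $\Sigma_p^i$ is $\nabla^N$-parallel is built into the construction. For commutativity, $\sigma_p^i$ and $\sigma_p^j$ commute as base maps (they act on disjoint sets of factors, or via Eq.~\eqref{eq:Vertauschen} at the fiber level), and $\sigma^i\circ\sigma^j=\sigma^j\circ\sigma^i$ on $\osc_pf$ by Eq.~\eqref{eq:Vertauschen}; so both $\Sigma_p^i\circ\Sigma_p^j$ and $\Sigma_p^j\circ\Sigma_p^i$ are $\nabla^N$-parallel vector bundle maps along the same base map with the same value at $p$, hence equal by the uniqueness part.

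The main obstacle I expect is precisely the path-independence (well-definedness) of the parallel-transport construction. The naive worry is that the monodromy of $\nabla^N$ restricted to $\osc f$ need not commute with $\sigma^i$. The way around it is to observe that $\sigma_p^i$, being an isometry of the symmetric space $M$ that fixes $p$, acts on the loop space based at $p$, and one shows that the composite ``transport, apply $\sigma^i$, transport back along the image loop'' equals the identity by reducing to geodesic loops in the irreducible slices, where the geodesic symmetry reverses the geodesic and the reflection $\sigma^i$ on $\osc_pf$ is exactly the fiberwise expression of this reversal — this is where Cor.~\ref{co:slice} enters, guaranteeing $f|L_i(p)$ is itself parallel, so that on the slice the osculating bundle and its symmetry behave as for a parallel submanifold of a symmetric space, a situation in which such reflections are classically known to be $\nabla^N$-parallel. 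Alternatively, and perhaps more cleanly, one invokes the transvection/holonomy description: $\nabla^N$-parallel transport around a loop lies in the extrinsic holonomy group of $\osc f$, which by the Ambrose–Singer-type reasoning is generated by the curvature endomorphisms $R^N(x,y)|\osc_pf$; by Eq.~\eqref{eq:ParallelSubbundle} and Eq.~\eqref{eq:Fundamental_2} these lie in $\so(V)_+$, hence commute with $\sigma=\sigma^1\circ\cdots\circ\sigma^r$, and the finer statement Thm.~\ref{th:Fundamental} together with the $\scrA$-gradation gives commutation with each individual $\sigma^i$ — which is exactly what makes the construction consistent.
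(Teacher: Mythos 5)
Your construction by $\nabla^N$-parallel transport along curves stands or falls with path-independence, and this is exactly the point where the proposal fails. For two curves $c_1,c_2$ from $p$ to $q$ your definition agrees only if $\sigma^i\circ P_{\gamma}=P_{\sigma^i_p\circ\gamma}\circ\sigma^i$ for every loop $\gamma$ at $p$ (here $P$ denotes $\nabla^N$-parallel transport in $\osc f$ and $\gamma=c_1^{-1}\ast c_2$), i.e.\ conjugation by $\sigma^i$ must realize the effect of the base isometry $\sigma^i_p$ on parallel transport. This is not the statement ``$\sigma^i$ commutes with the holonomy of $\osc f$'', and in the paper it is obtained only as a \emph{consequence} of Theorem~\ref{th:Symmetries} (it is the displayed identity in the proof of Theorem~\ref{th:hol}~(a)); so your argument is essentially circular. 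Moreover, your ``cleaner'' fix is based on a false claim: the extrinsic holonomy algebra $\frakh$ does \emph{not} centralize $\sigma^i$. Ambrose--Singer generates $\frakh$ from (parallel translates of) the tangential curvature endomorphisms $R^N(x,y)|V$ with $x,y\in T_qM$, and by Eq.~\eqref{eq:Degree2} an element $R^N(x_i,x_j)|V$ with $x_i\in D^i_p$, $x_j\in D^j_p$, $j\neq i$, lies in $\so(V)_{\delta_i+\delta_j}$ and therefore \emph{anti}commutes with $\sigma^i$; Eq.~\eqref{eq:Fundamental_2} and Theorem~\ref{th:Fundamental}, which you invoke, only concern $R^N(\xi,\eta)$ for \emph{normal} vectors and say nothing about these generators. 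The $\scrA$-gradation of $\frakh$ in Theorem~\ref{th:hol} shows that $\Ad(\sigma^i)$ merely normalizes $\frakh$, which is useless for your purpose; and even full commutation with holonomy would not yield the required relation between transport along $\gamma$ and along the different loop $\sigma^i_p\circ\gamma$. Your first sketch (``reduce to geodesic loops in the irreducible slices'') is likewise not an argument: general loops in $M$ do not decompose this way, and the holonomy in question is that of $\nabla^N$ on $\osc f$, not an intrinsic holonomy of the slices.

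The missing idea is the paper's two-step strategy, which avoids $\nabla^N$-transport altogether at the construction stage. One first builds $\Sigma^i_p$ \emph{split}-parallel: the tangential part is simply $T\sigma^i_p$, and the normal part $I^i_p$ is obtained from the homogeneous-vector-bundle structure of $\bbF$ (Proposition~\ref{p:HomogeneousVectorbundle} together with Lemma~\ref{le:CanonicalConnection}~(c), which identifies $\nabla^\bot$ with the canonical connection), via the correspondence of Lemmas~\ref{le:HomogeneousVectorBundle1} and~\ref{le:HomogeneousVectorBundle2}: a $\nabla^\bot$-parallel bundle map along $\sigma^i_p$ is the same as a parallel section of $\rmL^1(\bbF,\sigma_p^{i*}\bbF)$, i.e.\ a $K$-invariant element of its fibre at $p$, and $\sigma^i|\bbF_p$ is such an element because it is $\pm\Id$ on the $K$-invariant summands of Theorem~\ref{th:decomposition}. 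This settles well-definedness by representation-theoretic invariance rather than by any transport identity. One then verifies the compatibility $I^i_p\,h(x,y)=h(T\sigma^i_p x,T\sigma^i_p y)$ at every point (comparing two parallel sections with equal initial value, using Eq.~\eqref{eq:sigma2}), deduces $\Sigma^i_p(\fetth(x)v)=\fetth(T\sigma^i_p x)(\Sigma^i_p v)$ via Lemma~\ref{le:splitting}, and only then upgrades split-parallelity to $\nabla^N$-parallelity through the Gau{\ss}--Weingarten formula $\nabla^N=\nabla^{\mathrm{sp}}+\fetth$ (Eq.~\eqref{eq:first_Gauss}). Your final commutativity argument (uniqueness of a parallel bundle map with prescribed base map and value at $p$, plus Eq.~\eqref{eq:Vertauschen}) is fine once such maps exist, but the existence itself is the content of the theorem and your proposal does not establish it.
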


A proof of this theorem is given in Section~\ref{se:5}.

\begin{remark}
Put $\Sigma_p:=\Sigma^1_p\circ\cdots\circ\Sigma^r_p$ for some $p\in M$\,. 
Then the base map of $\sigma_p$ is the geodesic symmetry of
$M$\,; furthermore, according to Eq.~\eqref{eq:factorization}, $\Sigma_p|\osc_pf$ is the reflection in $\bot^1_pf$\,.
Therefore, $\Sigma_p$ is the ``weak extrinsic symmetry'' at $p$ whose existence
was already proved in~\cite[Thm.~9]{J1}\,. Now we see how the intrinsic product structure of $M$ induces a distinguished
factorization of $\Sigma_p$\,.
\end{remark}

\subsection{The ``extrinsic holonomy Lie algebra''  of $\osc f$}
\label{se:2.5}
For each differentiable curve $c:[0,1]\to N$ let
$\ghdisp{0}{1}{c}{N}$ denote the parallel displacement in $TN$ along
$c$\,, i.e.
$$
\ghdisp{0}{1}{c}{N}\,S(0)=S(1)
$$
for all parallel sections $S:[0,1]\to TN$ along $c$ (cf. Def.~\ref{de:split-parallel}). 
In the following, we equip $\osc f$ with the linear connection $\nabla^N$ (see
Sec.~\ref{se:2.4})\,. Given a ``base point'' $p\in M$\,, we put
$V:=\osc_pf$\,; then 
\begin{align}
\label{eq:Hol(osc_M)}
&\Hol(\osc f):=\Menge{\ghdisp{0}{1}{f\circ c}{N}|V:V\to V}{c:[0,1]\to M\text{\ is a loop
    with }c(0)=p}\subset\rmO(V)\;.
\end{align}
is the holonomy group of $\osc f$ (see~\cite[Ch.~II and~III]{KN})\,.

\begin{definition}[\cite{J1}]\label{de:HolonomyLieAlgebra}
Let $\frakh$ denote the Lie algebra of $\Hol(\osc f)$\,; 
then $\hol(\osc f)$ is a subalgebra of $\so(V)$\,, called the ``extrinsic
holonomy Lie algebra of $\osc f$''.
\end{definition}

Non surprisingly, the geometric structure of $\osc f$ described by
Thm.~\ref{th:Symmetries} strongly influences the structure of $\frakh$\,.
In~\cite[Thm.~3]{J1}, we have already established the splitting
$\frakh=\frakh_+\oplus\frakh_-$ with $\frakh_\pm:=\frakh\cap\so(V)_\pm$\,.
Again, here we obtain a ``finer'' result (in the sense of Remark~\ref{re:finer}):

\begin{theorem}\label{th:hol}
\begin{enumerate}
\item $\frakh$ is an $\scrA$-graded subalgebra of $\so(V)$\,, i.e.\ there is the splitting
\begin{equation}\label{eq:splitting_of_hol}
\frakh=\bigoplus_{\delta\in \scrA}\frakh_{\delta}\;,\end{equation}
with $\frakh_\delta:=\frakh\cap\so(V)_\delta$ for each $\delta\in\scrA$\,.
\item Put $\frakh_i:=\frakh_{\delta_i}$ (see Lemma~\ref{le:delta-Funktionen}) and $\frakh^i:=\frakh_0\oplus
  \frakh_i$ for $i=1,\ldots,r$\,.
Then $\frakh^i$ is an $\scrA$-graded subalgebra of $\frakh$\,.\footnote{Actually, $\frakh^i$ is merely a $\Z/2\,\Z$-graded Lie algebra.} 
Furthermore, we have 
\begin{align}\label{eq:OuterDerivation1}
[\fetth(x),\frakh^i]\subset\frakh^i\;
\end{align}
for each $x\in D^i_p$ and $i=1,\ldots,r$\,. 
\item We have $R^N(x,y)\,V\subset V$ and
$R^N(\xi,\eta)\,V\subset V$ for all
$x,y\in D^j_p$ and $\xi,\eta\in\bbF^{kl}_p$ or $\xi,\eta\in\tilde\bbF_p$ ($k,l=1,\ldots,r$);
furthermore, 
\begin{align}
\frakh_0=\sum_{j=1}^r\Spann{R^N(x,y)|V}{x,y\in
  D^j_p}+\Spann{R^N(\xi,\eta)|V}{\xi,\eta\in\tilde\bbF_p}
+\sum_{k,l=1}^r\Spann{R^N(\xi,\eta)|V}{\xi,\eta\in\bbF^{kl}_p}\;.\label{eq:hol_plus}
\end{align}
\end{enumerate}
\end{theorem}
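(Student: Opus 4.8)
The plan is to derive Theorem~\ref{th:hol} from the geometry established in Theorem~\ref{th:Symmetries}, essentially by transporting the pointwise $\scrA$-gradation of $\so(V)$ along loops and showing it is preserved. First I would recall the general principle (cf.~\cite[Ch.~II and~III]{KN}): for a $\nabla^N$-parallel vector bundle involution $\Sigma$ on $\osc f$ with base map $\underline{\Sigma}$ fixing $p$, the restriction $\Sigma|_{\osc_pf}=\sigma^i$ conjugates the holonomy group $\Hol(\osc f)$ into itself, because if $c$ is a loop at $p$ then $\underline{\Sigma}^i\circ c$ is again a loop at $p$ (here $\sigma^i_p(p)=p$), and $\nabla^N$-parallelity of $\Sigma^i_p$ forces $\ghdisp{0}{1}{f\circ(\underline{\Sigma}^i\circ c)}{N}|_V = \sigma^i\circ \ghdisp{0}{1}{f\circ c}{N}|_V\circ(\sigma^i)^{-1}$. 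Differentiating, $\Ad(\sigma^i)$ preserves $\frakh$, and since the $\{\sigma^i\}$ pairwise commute, $\frakh$ inherits the common eigenspace decomposition, which is exactly $\frakh_\delta=\frakh\cap\so(V)_\delta$; the bracket rule~\eqref{eq:rules} restricted to $\frakh$ gives part~(a). For part~(b), $\frakh^i=\frakh_0\oplus\frakh_i$ is the common $(+1)$-eigenspace of all $\Ad(\sigma^j)$ with $j\neq i$, hence a subalgebra by~\eqref{eq:rules} (sums of degrees in $\{0,\delta_i\}$ stay in $\{0,\delta_i\}$); the relation~\eqref{eq:OuterDerivation1} follows because $\fetth(x)$ for $x\in D^i_p$ has the right $\scrA$-degree — one checks using Eq.~\eqref{eq:sigma1}--\eqref{eq:sigma5} and the definition~\eqref{eq:fetth} that $\sigma^j\circ\fetth(x)\circ\sigma^j=\fetth(x)$ for $j\neq i$, so $\ad(\fetth(x))$ preserves the common $(+1)$-eigenspace $\frakh^i$.

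For part~(c), the curvature invariance statements $R^N(x,y)V\subset V$ for $x,y\in D^j_p$ and $R^N(\xi,\eta)V\subset V$ for $\xi,\eta\in\bbF^{kl}_p$ or $\tilde\bbF_p$ follow from Cor.~\ref{co:CurvatureInvariant} together with~\eqref{eq:ParallelSubbundle} and the fact that $\osc f=TM\oplus\bbF$ contains all these subspaces; more precisely $\tilde\bbF_p,\bbF^{kl}_p\subset\bbF_p\subset V$ and curvature-invariance of $\tilde\bbF_p$ and $\bbF^{ij}_p$ plus~\eqref{eq:CurvatureInvariance} does the rest (and for $x,y\in D^j_p\subset T_pM$ this is just~\eqref{eq:ParallelSubbundle}). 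The identity~\eqref{eq:hol_plus} is the substantive part: by the Ambrose--Singer theorem, $\frakh$ is spanned by parallel transports of curvature operators $R^N(v,w)|_V$ with $v,w\in V$, and expanding $V=D^1_p\oplus\cdots\oplus D^r_p\oplus\bbF^\sharp_p\oplus\bigoplus_i\bbF^i_p\oplus\bigoplus_{i<j}\bbF^{ij}_p$ (Thm.~\ref{th:decomposition}) into the listed blocks, one must identify which mixed terms $R^N(\cdot,\cdot)$ land in $\frakh_0$. Using Thm.~\ref{th:Fundamental}, the ``diagonal'' pairs (both arguments in the same $D^j_p$, or both in $\tilde\bbF_p$, or both in some $\bbF^{kl}_p$) contribute to $\so(V)_0$, hence to $\frakh_0$; one then argues, as in the proof of \cite[Thm.~3]{J1}, that these already generate all of $\frakh_0$ — the off-diagonal curvature terms contribute only to the $\frakh_\delta$ with $\delta\neq 0$, so they cannot enlarge $\frakh_0$. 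Combined with the fact that $\frakh_0=\frakh\cap\so(V)_0$ is itself spanned by the degree-$0$ part of the Ambrose--Singer generators, this yields~\eqref{eq:hol_plus}.

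The main obstacle I anticipate is part~(c), specifically proving that the listed ``diagonal'' curvature operators span \emph{all} of $\frakh_0$, not merely a subalgebra of it. The subtlety is that the Ambrose--Singer generating set consists of $\nabla^N$-parallel transports $\ghdisp{p}{q}{}{}\circ R^N(v,w)|_{\osc_qf}\circ(\ghdisp{p}{q}{}{})^{-1}$ for $q\neq p$, and a priori one must control the $\scrA$-degree of these transported operators. The key is that parallel transport along $f\circ c$ intertwines the fiberwise reflections $\Sigma^i$ (which are $\nabla^N$-parallel by Thm.~\ref{th:Symmetries}), so it preserves the $\scrA$-grading fiberwise; hence the degree-$0$ component of a transported curvature operator is the transport of the degree-$0$ component, which at $q$ is a sum of the diagonal pieces by Thm.~\ref{th:Fundamental} applied at $q$. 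Making this bookkeeping precise — that parallel transport commutes with the projections onto the $\scrA$-homogeneous components, uniformly in the base point — is where the real work lies; everything else is a formal consequence of Theorems~\ref{th:decomposition}, \ref{th:Fundamental} and~\ref{th:Symmetries}.
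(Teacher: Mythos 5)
Part~(a) of your proposal is exactly the paper's argument and is fine. The trouble starts in part~(b): your degree computation (that $\Ad(\sigma^j)\,\fetth(x)=\fetth(x)$ for $j\neq i$, equivalently $\fetth(x)\in\so(V)_i$ by Eq.~\eqref{eq:Degree1}) only shows $[\fetth(x),\so(V)_0\oplus\so(V)_i]\subset\so(V)_0\oplus\so(V)_i$ inside the \emph{ambient} algebra $\so(V)$; it does not show that the bracket lands in $\frakh$ at all. Note that $\fetth(x)$ itself need not belong to $\frakh$ --- that is precisely assertion~(d) of the Main Theorem --- so there is no cheap reason why $\ad(\fetth(x))$ should preserve $\frakh$. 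The essential input, which you neither prove nor cite, is the normalizer property $[\fetth(x),\frakh]\subset\frakh$ for all $x\in T_pM$ (Eq.~\eqref{eq:OuterDerivation2}, quoted in the paper from \cite[Thm.~3]{J1}); only this, combined with Eq.~\eqref{eq:Degree1} and the graded bracket rules~\eqref{eq:rules}, yields \eqref{eq:OuterDerivation1}.

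Part~(c) has two genuine gaps. First, Ambrose--Singer for the induced connection on $\osc f$ produces generators of the form $\tau_c^{-1}\circ R^N(u,w)|\osc_qf\circ\tau_c$ with $u,w\in T_qM$ \emph{tangent to the base}; it does not produce curvature operators with normal arguments. Hence even the inclusion of $\Spann{R^N(\xi,\eta)|V}{\xi,\eta\in\tilde\bbF_p}$ and $\Spann{R^N(\xi,\eta)|V}{\xi,\eta\in\bbF^{kl}_p}$ into $\frakh_0$ is nontrivial: the paper obtains it from the identities \eqref{eq:Fundamental0}, \eqref{eq:Fundamental1} (consequences of \eqref{eq:ZweiteAbleitung}, i.e.\ \cite[Lemma~5]{J1}) together with the same normalizer property $[\fetth(x),\frakh]\subset\frakh$, none of which appears in your outline. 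Second, your mechanism for the reverse inclusion --- that $\nabla^N$-parallel transport ``commutes with the projections onto the $\scrA$-homogeneous components'' --- is false: the family of fiberwise reflections $q\mapsto\sigma^i_q$ (with identity base map) is split-parallel but not $\nabla^N$-parallel, since $\Ad(\sigma^i)\,\fetth(x)=-\fetth(x)$ for $x\in D^i_q$. The involutions $\Sigma^i_p$ of Thm.~\ref{th:Symmetries} are $\nabla^N$-parallel only along the nontrivial base maps \eqref{eq:Basemap}; this gives $\Ad(\sigma^i)\frakh=\frakh$ for loops (your part~(a)) but says nothing about transport along arbitrary curves respecting the grading. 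The paper sidesteps transported generators entirely by quoting from the proof of \cite[Thm.~3]{J1} that $\frakh_+=\frakj_0+\frakj_2$ with $\frakj_0,\frakj_2$ spanned by curvature operators and double brackets $[\fetth(x_1),[\fetth(x_2),R^N(y_1,y_2)|V]]$ \emph{at the base point}; gradedness of $\frakh$ then isolates the degree-zero generators \eqref{eq:generators_0}--\eqref{eq:generators_1}, and \eqref{eq:ZweiteAbleitung} together with \eqref{eq:SimpleRelation1}, \eqref{eq:SimpleRelation2} rewrites these in terms of the right-hand side of \eqref{eq:hol_plus}. Without these inputs from \cite{J1} your argument cannot be completed as stated.
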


A proof of this theorem is given in Section~\ref{se:5}. 
\subsection{Homogeneity of parallel submanifolds}
\label{se:2.6}
Let $\Iso(N)$ denote the isometry group of $N$ (which is actually a Lie group, according to~\cite[Ch.~IV]{He}). 
Given a subset $M\subset N$\,, suppose that there exists a connected Lie subgroup $G\subset
\Iso(N)$ and some $p\in M$ such that $M$ is equal to the orbit $G\,p$\,.
Then $M$ is actually a submanifold\footnote{However, $M$ is not necessarily
  ``embedded'', i.e.\ its topology may be strictly finer than the ``subset
  topology'' (cf.~\cite[p.~7]{BCO})} of $N$\,, called a {\em homogeneous
  submanifold}. In this case, a standard argument shows that $M$ is even a complete Riemannian
manifold. Hence, if $f:M\to N$ is a parallel isometric immersion from a simply
connected symmetric space and $f(M)$ is a homogeneous submanifold of $N$\,,
then $f$ is necessarily a Riemannian covering onto $f(M)$\,. The following
stronger concept of extrinsic homogeneity was already used in~\cite{J2}:

\begin{definition}\label{de:homogeneous_holonomy}
Let $M$ be a submanifold of $N$\,.
We say that $M$ has {\em extrinsically homogeneous tangent holonomy
bundle}\/ if there exists a connected Lie subgroup $G\subset\Iso(N)$ with the
following properties:
\begin{itemize}
\item 
$g(M)=M$ for all $g\in G$\,.
\item  For each $p\in M$ and every curve $c:[0,1]\to M$ with $c(0)=p$ 
there exists some $g\in G$ such that $g(p)=c(1)$ and that the parallel displacement
along $c$ is given by
\begin{equation}\label{eq:f_is_equivariant}
\ghdisp{0}{1}{c}{M}=T_pg|T_pM:T_pM\to T_{c(1)}M\;.
\end{equation}
\end{itemize}
\end{definition}

\begin{definition}\label{de:curvature_isotropic}
\begin{enumerate}
\item
An (intrinsically) flat totally geodesic submanifold of $N$ is briefly called a
{\em flat} \/of $N$\,.  
\item According to~\cite[Ch.~V,~\S~6]{He}, the rank of $N$ is the maximal dimension of a flat of $N$\,.
\item According to~\cite[Ch.~V,~\S~1]{He}, $N$ is
called ``of compact type'' or ``of non-compact type'' if the 
Killing form of $\fraki(N)$ restricted to $\frakp$ is strictly negative
or strictly positive, respectively.\footnote{$N$ is of
compact (or non-compact) type if and only if the universal covering space of
$N$ is compact (or non-compact) (cf.~\cite[Ch.V-VII]{He}).}
\end{enumerate}
\end{definition}

Let $p$ be a fixed point of $M$\,; then $T_pM$ is seen as a subspace of
$T_{f(p)}N$\,, and so is $D^i_p$ for $i=1,\ldots,r$\,. Let $\exp^N$ denote the exponential spray
of $N$\,. Since $f|L_i(p)$ is also a parallel isometric immersion according
to Corollary~\ref{co:slice}, we can apply the Codazzi Equation to deduce that $D^i_p$ is even a curvature invariant subspace
of $T_{f(p)}N$ (cf. Fn.~\ref{fn:Ci}). Hence, by virtue of a result due to E.~Cartan,
\begin{equation}
\label{eq:tg}
\bar M_i:=\exp^N( D^i_p)\subset N
\end{equation} 
is a totally geodesically embedded symmetric space. Furthermore, let $\frakh$
denote the Lie algebra from Def.~\ref{de:HolonomyLieAlgebra}.

\begin{theorem}[Main Theorem]\label{th:MainResult}
Besides the conventions made at the beginning of Section~\ref{se:overview}, 
we also assume that $N$ is of compact or non-compact type and that
$\dim(M_i)\geq 3$ holds for $i=1,\ldots,r$\,.\footnote{\label{fn:1}
This condition simply means that $M$ does not split off a factor whose
dimension is 1 or 2 (cf. Def.~\ref{de:deRham})} Then the following
assertions are equivalent:
\begin{enumerate}
\item $f(L_i(p))$ is not contained in any flat of $N$ for $i=1,\ldots,r$\,.
\item $f(M)$ is a homogeneous submanifold.
\item $f(M)$ is a submanifold with extrinsically homogeneous
  tangent holonomy bundle. 
\item  We have \begin{equation}
\label{eq:fetth_in_hol}
\fetth(T_pM)\subset\frakh\,.
\end{equation}
\item The symmetric space $\bar M_i$ defined by Eq.~\eqref{eq:tg} is
  irreducible\footnote{\label{fn:irreducible} An arbitrary symmetric space is called ``irreducible'' if its universal covering space
    is an irreducible symmetric space in the sense of Def.~\ref{de:deRham}.} for $i=1,\ldots,r$\,.
\end{enumerate} 
\end{theorem}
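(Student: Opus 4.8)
The plan is to establish the cycle of implications $(a)\Rightarrow(e)\Rightarrow(d)\Rightarrow(c)\Rightarrow(b)\Rightarrow(a)$, using the $\scrA$-graded structure of $\frakh$ from Thm.~\ref{th:hol} as the main algebraic tool. The implications $(c)\Rightarrow(b)$ and $(b)\Rightarrow(a)$ should be the easy ones. For $(c)\Rightarrow(b)$, a submanifold with extrinsically homogeneous tangent holonomy bundle is in particular an orbit of the group $G$ appearing in Def.~\ref{de:homogeneous_holonomy}, hence homogeneous. For $(b)\Rightarrow(a)$: if $f(M)$ is a homogeneous submanifold and some $f(L_i(p))$ were contained in a flat $F\subset N$, then $f(L_i(p))$ would itself be an intrinsically flat parallel submanifold; but $L_i(p)$ is an irreducible symmetric space of dimension $\geq 3$, and a standard argument (the Gau{\ss} equation forces the intrinsic curvature of a submanifold of a flat to come entirely from the second fundamental form, making $L_i(p)$ into an isoparametric-type object that cannot be an irreducible symmetric space of rank $\geq 1$) yields a contradiction. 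I would also need the orbit structure to be compatible with the slice foliation, which follows from Cor.~\ref{co:slice}.

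The heart of the argument is $(a)\Rightarrow(e)\Rightarrow(d)$. For $(a)\Rightarrow(e)$: by Eq.~\eqref{eq:tg} and the remark preceding the theorem, $\bar M_i=\exp^N(D^i_p)$ is a totally geodesic symmetric space whose tangent space at $f(p)$ is $D^i_p$ and whose intrinsic curvature is $R^N|D^i_p$. Condition $(a)$ says $\bar M_i$ is not flat (any reducible symmetric space of rank $\geq 2$ that is \emph{not} irreducible would, combined with $\dim\geq 3$, force a flat or lower-rank de\,Rham factor to be detectable as a flat of $N$ — here I expect to use that a de\,Rham factor of $\bar M_i$ is again totally geodesic in $N$, hence a flat of $N$ if it is Euclidean, or else has rank $\geq 1$ contradicting... ) — so the precise claim is that for an \emph{irreducible} symmetric space $L_i(p)$ of dimension $\geq 3$, its totally geodesic image $\bar M_i$ is irreducible iff $f(L_i(p))$ is not contained in a flat. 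One direction is that a Euclidean de\,Rham factor of $\bar M_i$ gives a flat of $N$ containing a nontrivial piece of $f(L_i(p))$; for the full statement one uses the structure of the isometric immersion $L_i(p)\hookrightarrow \bar M_i$ together with the rank-$1$ versus higher-rank dichotomy. This is where the appendix on isotropy representations and extrinsic tensor products presumably enters.

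For $(e)\Rightarrow(d)$: this is the deepest step and I expect it to be the main obstacle. The idea is to use Thm.~\ref{th:hol}(c), which expresses $\frakh_0$ explicitly in terms of the curvature endomorphisms $R^N(x,y)|V$ for $x,y\in D^j_p$ and $R^N(\xi,\eta)|V$ for $\xi,\eta\in\bbF^{kl}_p$ or $\tilde\bbF_p$. One knows $\frakh$ is a subalgebra of $\so(V)$ that, together with $\fetth(T_pM)$, closes up (via Eq.~\eqref{eq:Gauss-Ricci}) into a Lie algebra; the obstruction to $\fetth(T_pM)\subset\frakh$ is exactly an outer part. Using that $N$ is of compact or non-compact type, the ``transvection algebra'' $\fetth(T_pM)+\frakh$ is semisimple, and I would argue that irreducibility of each $\bar M_i$ forces the $D^i_p$-component $\fetth(D^i_p)$ to lie in $\frakh^i\subset\frakh$: concretely, $\fetth(D^i_p)$ generates, together with $\frakh_0$, the transvection algebra of $\bar M_i$, and irreducibility prevents this from having a nontrivial ``extrinsic'' complement; the relation Eq.~\eqref{eq:OuterDerivation1}, $[\fetth(x),\frakh^i]\subset\frakh^i$, is what lets one run this factor-by-factor. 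Summing over $i$ and using $\fetth(T_pM)=\bigoplus_i\fetth(D^i_p)$ (which follows from Eqs.~\eqref{eq:sigma1}–\eqref{eq:sigma5} since $\fetth$ is homogeneous of odd degree $\delta_i$ on the $i$-th slot) gives $(d)$.

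Finally $(d)\Rightarrow(c)$: this should be the ``holonomy is a group of extrinsic isometries'' mechanism from~\cite{J2}. Assuming $\fetth(T_pM)\subset\frakh$, the Lie algebra $\frakh$ is invariant under $\fetth(T_pM)$ and hence $\frakg:=\fetth(T_pM)\oplus\frakh$ — here one must check this is genuinely a Lie algebra, which follows from Eq.~\eqref{eq:Gauss-Ricci} together with $(d)$ — integrates to a connected Lie group $G\subset\Iso(N)$. One then verifies that $G$ preserves $f(M)$ and realizes parallel displacement along curves in $M$ as in Eq.~\eqref{eq:f_is_equivariant}: the transvections along geodesics of $M$ act as parallel translation in $\osc f$ by construction of $\frakh$, and $\fetth(T_pM)\subset\frakh$ ensures the tangent-normal splitting is preserved, so the restriction to $T_pM$ is exactly the Van der Waerden–Bortolotti parallel transport. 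This is essentially~\cite[proof of the homogeneity criterion]{J2}, adapted to the present $\osc f$ setting, so I would cite it and indicate the adaptation rather than redo it. The main work, again, is $(e)\Rightarrow(d)$, where the compact/non-compact type hypothesis and the irreducibility of the $\bar M_i$ must be leveraged through the representation theory of the appendix.
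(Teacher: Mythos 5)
Your overall cycle of implications coincides with the paper's, and your treatment of $(c)\Rightarrow(b)$ and of $(d)\Rightarrow(c)$ (citing \cite{J2}) is fine; but the three remaining directions all have genuine gaps. The most serious one is $(e)\Rightarrow(d)$, which you yourself identify as the heart. Your proposed mechanism --- that the ``transvection algebra'' $\fetth(T_pM)+\frakh$ is semisimple and that ``irreducibility prevents a nontrivial extrinsic complement'' --- is not an argument: the second clause is a restatement of the conclusion to be proved, and the semisimplicity claim is neither justified nor usable (it is not even clear that $\fetth(T_pM)+\frakh$ is a Lie algebra before one knows $(d)$). The paper's proof is quantitative: for each $i$ one considers $A^i(x)=\fetth(x)-P_i(\fetth(x))$, shows (Lemma~\ref{le:InjectiveOrZero}) that $A^i$ takes values in $\frakc(\frakh^i)\cap\so(V)_i$ and is either injective or identically zero (the latter case giving $\fetth(D^i_p)\subset\frakh_i$), and then establishes the estimate $\dim\bigl(\frakc(\frakh^i)\cap\so(V)_i\bigr)\leq 2$ (Prop.~\ref{p:Prop1001}); comparison with $\dim(M_i)\geq 3$ kills injectivity. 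Proving that estimate is where hypothesis $(e)$ and the whole machinery enter: the block decomposition $V_{ij}=D^i_p\oplus\bbF^{ij}_p$, the totally geodesic submanifolds $\exp^N(\bbF^{ij}_p)$ together with a five-case analysis of their de\,Rham type (using the appendix on exterior tensor products of isotropy representations and the Hermitian/non-Hermitian dichotomy), curvature-isotropic subspaces, and, for the diagonal block $V_{ii}$, a reduction to \cite[Prop.~12]{J2}. None of this, nor any substitute for it, appears in your sketch; in particular you never say where $\dim(M_i)\geq 3$ is used, and it is used exactly here.

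The two ``easy'' directions are also not correct as you state them. For $(b)\Rightarrow(a)$ you assert that a (parallel) submanifold contained in a flat must be intrinsically flat, respectively cannot be an irreducible symmetric space of dimension $\geq 3$; this is false --- round spheres and the standard (Veronese-type) embeddings are parallel submanifolds of Euclidean space, hence of flats, and are irreducible symmetric spaces. The correct argument first transfers homogeneity to the slices: homogeneity of $f(M)$ yields an equivariant homomorphism $\tau:G\to\Iso(M)^0$ with transitive image, and via Eq.~\eqref{eq:IsometryGroup}--\eqref{eq:IsotropyGroup} the group $\tau(G)$ is transitive on $L_i(p)$, so $f(L_i(p))$ is itself a homogeneous submanifold; only then does \cite[Prop.~1]{J2} --- and this is precisely where the compact/non-compact type hypothesis is used, which your argument never invokes --- give that a homogeneous submanifold contained in a flat is intrinsically flat, contradicting irreducibility of $M_i$. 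Your appeal to Cor.~\ref{co:slice} does not provide the needed compatibility of orbits with the slice foliation. Finally, for $(a)\Rightarrow(e)$ your sketch only produces a flat of $N$ containing ``a nontrivial piece'' of $f(L_i(p))$ from a Euclidean de\,Rham factor of $\bar M_i$; this does not establish the contrapositive (that reducibility of $\bar M_i$ forces all of $f(L_i(p))$ into a flat), and note that $f(L_i(p))$ need not be contained in $\bar M_i=\exp^N(D^i_p)$ at all. The paper obtains this direction by applying \cite[Thm.~5]{J2} to the parallel immersion $f|L_i(p)$ of the simply connected irreducible slice (Cor.~\ref{co:slice}); unless you intend to reprove that theorem, you should cite it rather than improvise.
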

The direction ``$(a)\Rightarrow (c)$'' should be seen as the main
result of this article; in case $M$ is even irreducible, this implication
already follows from~\cite[Thm.~5]{J2}. A proof of the Main Theorem can be found in Section~\ref{se:6}. 

The following is an immediate consequence of Theorem~\ref{th:MainResult}:

\begin{corollary} In the situation of Theorem~\ref{th:MainResult}, 
if all factors of $M$ are of dimension larger than the rank of
$N$\,, then $f(M)$ is a homogeneous submanifold of $N$\,.  
\end{corollary}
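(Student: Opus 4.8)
The plan is to prove this corollary by verifying condition~(a) of Theorem~\ref{th:MainResult} and then invoking the implication $(a)\Rightarrow (b)$. So fix a base point $p\in M$ and, for each $i\in\{1,\ldots,r\}$, consider the product slice $L_i(p)$ and its image $f(L_i(p))$. Recall that by Corollary~\ref{co:slice}, $f|L_i(p):L_i(p)\to N$ is itself a parallel isometric immersion, and that $L_i(p)$ is a simply connected, irreducible symmetric space with $\dim(L_i(p))=\dim(M_i)$. The hypothesis of the corollary is precisely that $\dim(M_i)>\rank(N)$ for $i=1,\ldots,r$.

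First I would argue by contradiction: suppose $f(L_i(p))$ \emph{were} contained in some flat $F$ of $N$ for some $i$. A flat of $N$ is, by Definition~\ref{de:curvature_isotropic}, an intrinsically flat totally geodesic submanifold, so $\dim(F)\leq\rank(N)$. On the other hand, the restriction $f|L_i(p)$ is an isometric immersion, hence $\dim(f(L_i(p)))=\dim(L_i(p))=\dim(M_i)$ as an immersed submanifold. Since an immersed submanifold sitting inside $F$ cannot have dimension exceeding $\dim(F)$, we would get $\dim(M_i)\leq\dim(F)\leq\rank(N)$, contradicting the hypothesis $\dim(M_i)>\rank(N)$. (One should be slightly careful here: ``contained in a flat'' must be interpreted at the level of the immersion, i.e.\ the immersion $f|L_i(p)$ factors through the totally geodesic inclusion $F\hookrightarrow N$; then the dimension count via the immersed submanifold $f|L_i(p)$ is unambiguous, and the immersed image has dimension $\dim(M_i)$.) Hence no $f(L_i(p))$ can lie in a flat, which is exactly statement~(a) of Theorem~\ref{th:MainResult}.

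Next, since the standing hypotheses of Theorem~\ref{th:MainResult} (namely $M$ simply connected without Euclidean factor, $f$ parallel, $N$ of compact or non-compact type, and $\dim(M_i)\geq 3$) are all in force in the present situation — note in particular that $\dim(M_i)>\rank(N)\geq 1$ forces $\dim(M_i)\geq 2$, and if $N$ is of compact or non-compact type then $\rank(N)\geq 1$; to secure $\dim(M_i)\geq 3$ one uses that the rank of an irreducible symmetric space of compact or non-compact type that is not a sphere or real hyperbolic space is $\geq 1$, but actually the clean way is to observe $\rank(N)\geq 1$ always, hence $\dim(M_i)\geq 2$, and then invoke that the rank-$1$ case already gives $\dim(M_i)\geq 2$; if finer bookkeeping is needed one notes that whenever $\rank(N)=1$ and $\dim(M_i)=2$ the slice $f(L_i(p))$ is a parallel surface in a rank-$1$ space which is never a flat, so~(a) holds anyway and Theorem~\ref{th:MainResult} may be applied after passing to the genuinely $\geq 3$-dimensional factors) — we may apply the equivalence. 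From $(a)\Rightarrow (b)$ we conclude that $f(M)$ is a homogeneous submanifold of $N$, which is the assertion of the corollary.

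The main obstacle, such as it is, is not deep: it is entirely the dimension bookkeeping in the previous paragraph, i.e.\ confirming that $\dim(M_i)>\rank(N)$ together with $N$ being of compact or non-compact type genuinely implies $\dim(M_i)\geq 3$, so that the hypothesis ``$\dim(M_i)\geq 3$'' of Theorem~\ref{th:MainResult} is automatically met and the theorem can be cited verbatim. Since $N$ of compact or non-compact type has $\rank(N)\geq 1$, we get $\dim(M_i)\geq 2$; and $\rank(N)=1$ holds only for spheres and (real, complex, quaternionic, Cayley) hyperbolic spaces, in each of which a two-dimensional parallel submanifold is never totally geodesic and flat, so~(a) is automatic — thus in the borderline case one does not even need Theorem~\ref{th:MainResult} with the $\geq 3$ hypothesis, one simply notes the conclusion is already known there, or more honestly one restricts attention to the factors with $\dim(M_i)\geq 3$ and treats the rank-$1$/dimension-$2$ factors by direct inspection. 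Everything else is a one-line appeal to the Main Theorem.
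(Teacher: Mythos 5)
Your core argument is correct and is exactly the paper's intended one (the paper states the corollary as an immediate consequence of Theorem~\ref{th:MainResult} without separate proof): a flat of $N$ has dimension at most $\rank(N)$, so $\dim(M_i)>\rank(N)$ forces $f(L_i(p))$ to lie in no flat, which is condition~(a), and then $(a)\Rightarrow(b)$ gives homogeneity. The long ``dimension bookkeeping'' paragraph is unnecessary and partly off the mark — the phrase ``in the situation of Theorem~\ref{th:MainResult}'' means \emph{all} of its hypotheses, including $\dim(M_i)\geq 3$, are already assumed, so there is nothing to derive there; moreover your side remarks are shaky (compact rank-one spaces also include the projective spaces $\C\rmP^n$, etc., and one cannot apply the theorem ``after passing to the genuinely $\geq 3$-dimensional factors'', since its hypothesis concerns every factor of $M$).
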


\section{Proof of Theorem~\ref{th:decomposition} and Corollary~\ref{co:slice}}
\label{se:3}
Let $M$ be a simply connected Riemannian submanifold without Euclidian factor
and $M_1\times\cdots\times M_r$ denote its de\,Rham decomposition (see Def.~\ref{de:deRham})\,.
Let $\Iso(M)^0$ and $\Iso(M_i)^0$ denote the connected components of the corresponding
isometry groups, respectively. We keep $p=(p_1,\ldots,p_r)\in M$ fixed and let $K$ and $K_i$ denote the
isotropy subgroups of $\Iso(M)^0$ and $\Iso(M_i)^0$ at $p$ and $p_i$\,,
respectively, for $i=1,\ldots,r$\,. Then, by means of the
uniqueness of the de\,Rham decomposition, we have 
\begin{equation}\label{eq:IsometryGroup}
\Iso(M)^0\cong \Iso(M_1)^0\times\cdots\times \Iso(M_r)^0\;,\end{equation} hence
\begin{equation}\label{eq:IsotropyGroup}
K\cong K_1\times\cdots\times K_r\;.
\end{equation}
Furthermore, $K$ and $K_i$ both are connected (since $M$ and its irreducible
factors are simply connected).

\begin{definition}\label{de:HomogeneousVectorbundle}
A \emph{homogeneous vector bundle} over $M$ is a pair $(\bbE,\alpha)$ 
where $\bbE\to M$ is a  vector bundle and
$\alpha:\Iso(M)^0\times\bbE\to\bbE$  is an action
through vector bundle isomorphisms (cf. Def.~\ref{de:vector_bundle_maps}~(b)) such that the
bundle projection of $\bbE$ is equivariant.
\end{definition}

In the situation of the last definition, 
\begin{equation}\label{eq:tau}
\Iso(M)^0\to M\,,\; g\mapsto g(p)
\end{equation}
is a principal fiber bundle with structure group $K$ and
$\bbE$ is a vector bundle associated therewith via
\begin{equation}\label{eq:assoziierung}
\Iso(M)^0\times \bbE_p\to\bbE\,,\;(g,v)\mapsto\alpha(g,v)\;.
\end{equation}
Therefore, one briefly writes ``$\bbE\cong
\Iso(M)^0\times_K\bbE_p$'' or ``$\bbE\cong
(\Iso(M)^0\times\bbE_p)/K$'' (cf.~\cite[Ch.~2.1]{J2}).

Now let $f:M\to N$ be a parallel isometric immersion and 
$\bbF$ denote the first normal bundle of $f$
(see Eq.~\eqref{eq:FirstNormalBundle}), equipped with the linear connection $\nabla^\bot$\,.

\begin{proposition}[{\cite[Prop.~10]{J1}}]\label{p:HomogeneousVectorbundle}
There exists an action $\alpha:\Iso(M)^0\times \bbF\to
\bbF$ through isometric, $\nabla^\bot$-parallel vector bundle isomorphisms 
characterized by
\begin{equation}\label{eq:alpha}
\alpha(g,h(x,y))=h(T_pg\,x,T_pg\,y)
\end{equation}
for all $x,y\in T_pM$ and $g\in\Iso(M)^0$\,. Hence $(\bbF,\alpha)$ is a homogeneous vector bundle over $M$\,.
\end{proposition}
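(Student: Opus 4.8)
The plan is to read the action off fiberwise from the characterizing identity~\eqref{eq:alpha}, so that the whole statement collapses to a single well-definedness question, which I would then settle by playing the parallelism of $h$ against the holonomy of the symmetric space $M$.

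First I would reformulate. Since $h$ is symmetric, $X\odot Y\mapsto h(X,Y)$ defines a bundle map $\beta\colon\Sym^2(TM)\to\bbF$ which is fiberwise surjective by the definition~\eqref{eq:FirstNormalBundle} of $\bbF$, and which is parallel (for the connection on $\Sym^2(TM)$ induced by $\nabla^M$, together with $\nabla^\bot$ on $\bbF$) because $h$ is a parallel section of $\rmL^2(TM,\bot f)$. Hence $\mathcal K:=\ker\beta$ is a $\nabla^M$-parallel vector subbundle of $\Sym^2(TM)$ and $\beta$ induces fiberwise isomorphisms $\bbF_q\cong\Sym^2(T_qM)/\mathcal K_q$. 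For $g\in\Iso(M)^0$ and $q\in M$ the differential $T_qg$ induces $(T_qg)_*\colon\Sym^2(T_qM)\to\Sym^2(T_{g(q)}M)$, and the prescription $h(x,y)\mapsto h(T_qg\,x,T_qg\,y)$ descends to a well-defined linear map $\bbF_q\to\bbF_{g(q)}$ if and only if $(T_qg)_*\mathcal K_q\subseteq\mathcal K_{g(q)}$. So everything comes down to this one inclusion.

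To prove it — and this is the step I expect to be the main obstacle — I would factor $T_qg$ through a parallel transport. Picking a broken geodesic $c$ from $q$ to $g(q)$ and composing the transvections along its segments yields $\tau\in\Iso(M)^0$ with $\tau(q)=g(q)$ and $T_q\tau$ equal to the $\nabla^M$-parallel transport $P^M_c$ along $c$; then $k:=\tau^{-1}g$ fixes $q$, so $k\in K$, and $T_qg=P^M_c\circ T_qk$. Since $\mathcal K$ is $\nabla^M$-parallel, $(P^M_c)_*$ maps $\mathcal K_q$ onto $\mathcal K_{g(q)}$, so it remains to check $(T_qk)_*\mathcal K_q\subseteq\mathcal K_q$ for every $k\in K$. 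This is exactly where the hypotheses on $M$ enter: because $M$ is simply connected, symmetric and without Euclidean factor, the Riemannian holonomy group of $M$ at $q$ coincides with the linear isotropy group $\{T_qk : k\in K\}$ — classical for irreducible symmetric spaces, and inherited by products without flat factor via~\eqref{eq:IsometryGroup} and~\eqref{eq:IsotropyGroup} — whereas the parallel subbundle $\mathcal K$ is, by definition, invariant under parallel transport along loops, i.e.\ $\mathcal K_q$ is invariant under the holonomy group acting on $\Sym^2(T_qM)$. Putting the two together gives $(T_qk)_*\mathcal K_q=\mathcal K_q$, as needed. I expect this identification of holonomy with linear isotropy to be essentially the only nontrivial ingredient.

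It then remains to collect the properties of $\alpha$, all by evaluation on the spanning elements $h(x,y)$. Setting $\alpha(g)|\bbF_q:=\alpha(g)_q$, each $\alpha(g)_q\colon\bbF_q\to\bbF_{g(q)}$ is a linear isomorphism (onto, since $T_qg$ is bijective and $\bbF_{g(q)}$ is spanned by the values of $h$ at $g(q)$), the map $\alpha\colon\Iso(M)^0\times\bbF\to\bbF$ covers the base map $g\colon M\to M$ so that the bundle projection is equivariant, and $\alpha(g_1g_2)=\alpha(g_1)\circ\alpha(g_2)$ follows from the chain rule $T_q(g_1g_2)=T_{g_2(q)}g_1\circ T_qg_2$; hence $\alpha$ is an action through vector bundle isomorphisms. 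For the refinements I would use $T_qg=P^M_c\circ T_qk$ again: it gives $\alpha(g)_q=P^\perp_c\circ\alpha(k)_q$, and choosing a loop $\gamma$ at $q$ with $P^M_\gamma=T_qk$ together with the parallelism of $h$ gives $\alpha(k)_q=P^\perp_\gamma|\bbF_q$; since $\bbF$ is a $\nabla^\bot$-parallel subbundle carrying the fiber metric induced from $\bot f$, both of these $\nabla^\bot$-parallel transports are fiberwise isometries, so $\alpha(g)$ is isometric. Finally, evaluating on $h(x,y)$ and using once more that $h$ is parallel and that the isometry $g$ commutes with $\nabla^M$-parallel transport, one gets $\alpha(g)_{q_1}\circ P^\perp_c=P^\perp_{g\circ c}\circ\alpha(g)_{q_0}$ for every curve $c$ from $q_0$ to $q_1$, which is precisely the statement that $\alpha(g)$ is a $\nabla^\bot$-parallel vector bundle map in the sense of Definition~\ref{de:vector_bundle_maps}(c). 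Smoothness of $\alpha$ is immediate from the defining formula, so $(\bbF,\alpha)$ is a homogeneous vector bundle over $M$, with the characterization~\eqref{eq:alpha} holding by construction.
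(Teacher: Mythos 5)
Your argument is correct, and it is worth noting that the paper itself offers no proof of this proposition: it is quoted verbatim from~\cite[Prop.~10]{J1}, so there is no in-paper argument to compare against. Your construction — presenting $\bbF_q$ as the quotient $\Sym^2(T_qM)/\mathcal{K}_q$ of a parallel surjection, reducing well-definedness of $h(x,y)\mapsto h(T_qg\,x,T_qg\,y)$ to the invariance $(T_qg)_*\mathcal{K}_q\subseteq\mathcal{K}_{g(q)}$, and then factoring $T_qg$ as (parallel transport along a broken geodesic) $\circ$ (isotropy differential) — is sound, and the one genuinely nontrivial input you isolate, namely that for a simply connected symmetric space without Euclidian factor the linear isotropy group coincides with the holonomy group, is exactly the fact the paper records as Lemma~\ref{le:CanonicalConnection}~(b) (there deduced from~\cite[Prop.~11]{J1}, so no circularity arises). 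Your verification of the remaining properties (isometry via $\alpha(g)_q=P^\bot_c\circ P^\bot_\gamma|\bbF_q$, and $\nabla^\bot$-parallelity via the intertwining $\alpha(g)_{q_1}\circ P^\bot_c=P^\bot_{g\circ c}\circ\alpha(g)_{q_0}$, both resting on the parallelity of $h$) is also correct.

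Two small points you pass over quickly: smoothness of $\alpha$ should be justified by choosing, near any point, a local frame of $\bbF$ of the form $h(X_a,Y_a)$ for smooth local vector fields (possible because $\bbF$ has constant rank), on which the defining formula visibly depends smoothly on $(g,q)$; and the word ``characterized'' in the statement asks for uniqueness, which follows from the action property together with transitivity of $\Iso(M)^0$ on $M$ and the fact that $\bbF_p$ is spanned by values of $h$ — you assert this only implicitly, and a sentence would close it.
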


Let $(\bbE,\alpha)$ be a homogeneous vector bundle over $M$ (compare
Def.~\ref{de:HomogeneousVectorbundle}) and $\fraki(M)$ denote the Lie algebra of $\Iso(M)$\,.
Following~\cite[Ch.~2.1]{J2}, 
the Cartan decomposition $\fraki(M)=\frakk\oplus\frakp$ induces a distinguished
connection $\nabla^\bbE$\,, called the
\emph{canonical connection}. It can be obtained as follows:
On the principal fiber bundle~\eqref{eq:tau}
there is a $\Iso(M)^0$-invariant connection $\scrH$ defined by
\begin{equation}\label{eq:scrH}
\scrH_g:=\Menge{X_g}{X\in\frakp}
\end{equation}
for all $g\in\Iso(M)^0$ where the elements of $\frakp$ are also considered as left-invariant
vector fields on $\Iso(M)^0$ (see~\cite[Vol.1,~p.~239]{KN}).
Since $\bbE$ is an associated vector bundle via Eq.~\eqref{eq:assoziierung},
the connection $\scrH$ induces a linear connection $\nabla^\bbE$\,. 
In order to relate  the parallel displacement induced by
$\nabla^\bbE$ to the horizontal structure $\scrH$\,, let a curve
$c:\R\to M$ with $c(0)=p$ be given; then
\begin{equation}\label{eq:pardisp}
\forall v\in \bbE_p:\;\ghdisp{0}{1}{c}{\nabla^\bbE}\,v=\alpha(\hat c(1),v)\;,
\end{equation}
where $\hat c:[0,1]\to \Iso(M)^0$ denotes the $\scrH$-lift of $c$ with $\hat
c(0)=\Id$\,. One can also show that the canonical connection does not depend on
the special choice of the base point $p$\,.

Let $\rho:K\to\rmO(T_pM)$ denote the isotropy representation, which equips $TM$
with the structure of a homogeneous vector bundle over $M$\,, too. Note that
$\rho$ is a faithful representation (since $K$ acts through isometries on $M$).
Furthermore, let $\Hol(M)\subset\rmO(T_pM)$ and
$\Hol(\bbF)\subset\rmO(\bbF_p)$ denote the corresponding holonomy groups,
respectively. Then $\Hol(M)$ and $\Hol(\bbF)$ both are connected
(since $M$ is simply connected).

\begin{lemma}\label{le:CanonicalConnection}
\begin{enumerate}
\item $\nabla^M$ is the canonical connection of $TM$\,.
\item $\rho$ maps $K$ isomorphically onto $\Hol(M)$ such that $K_i$
  corresponds to  $\Hol(M_i)$\,. Therefore, $K_i$ acts irreducibly and
  non-trivially on $D^i_p$ for $i=1,\ldots,r$\,. 
\item $\nabla^\bot$ is the canonical connection of $\bbF$ and the
  corresponding holonomy group $\Hol(\bbF)$ is the homomorphic image of $K$ which
  is given by \begin{equation}\label{eq:54321}
\Menge{\alpha(g,\,\cdot\,)|\bbF_p}{g\in K}\;.
\end{equation}
\end{enumerate}
\end{lemma}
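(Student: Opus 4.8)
The plan is to verify the three assertions by systematically unwinding the construction of the canonical connection via the horizontal distribution $\scrH$ from Eq.~\eqref{eq:scrH} and comparing parallel displacement through Eq.~\eqref{eq:pardisp}. For part~(a), I would observe that the Levi-Civita connection $\nabla^M$ of a symmetric space $M$ is precisely the connection induced from the reductive decomposition $\fraki(M)=\frakk\oplus\frakp$: indeed, $\nabla^M$ is $\Iso(M)^0$-invariant, torsion-free, and its geodesics through $p$ are the curves $t\mapsto\exp(tX)\cdot p$ with $X\in\frakp$, which is exactly the defining property of the canonical connection attached to $\scrH$ (the horizontal lift of such a geodesic is $t\mapsto\exp(tX)$, an integral curve of the left-invariant field $X\in\frakp$, hence horizontal by~\eqref{eq:scrH}). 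This is the standard Ambrose--Singer/Kobayashi--Nomizu fact for symmetric spaces, so I would simply cite~\cite[Ch.~X]{KN} or~\cite[Ch.~IV]{He} rather than reprove it. Under the product decomposition~\eqref{eq:IsometryGroup}, the reductive decomposition of $\fraki(M)$ is the direct sum of those of the $\fraki(M_i)$, so $\scrH$ and hence $\nabla^M$ respect the product structure, which will be reused below.

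For part~(b): faithfulness of $\rho$ was already noted, so $\rho:K\to\rmO(T_pM)$ is injective. The holonomy group $\Hol(M)\subset\rmO(T_pM)$ of the canonical connection of a homogeneous vector bundle is, by the general holonomy formula for invariant connections (the image of loops under~\eqref{eq:pardisp}), contained in $\{\rho(k)\mid k\in K\}$; conversely, since $M$ is a symmetric space, the holonomy group of $\nabla^M$ is the full linear isotropy group $\rho(K)$ (this is again classical: for a simply connected symmetric space $G/K$ with $G=\Iso(M)^0$, the restricted holonomy group equals the image of the isotropy representation). Hence $\rho$ maps $K$ isomorphically onto $\Hol(M)$. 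Compatibility with the product factorization then follows from~\eqref{eq:IsotropyGroup} together with the fact, established in part~(a), that the canonical connection splits as $\nabla^{M_1}\oplus\cdots\oplus\nabla^{M_r}$: parallel displacement along a loop in $M$ is the product of parallel displacements of the projected loops in the $M_i$, so $\Hol(M)=\Hol(M_1)\times\cdots\times\Hol(M_r)$ under~\eqref{eq:IsotropyGroup}, and $K_i\cong\Hol(M_i)$. That $K_i$ acts irreducibly and nontrivially on $D^i_p=T_{p_i}M_i$ is then exactly the statement that $M_i$ is an irreducible symmetric space of dimension $\geq 2$ (its linear isotropy representation is irreducible by the de\,Rham decomposition, and nontrivial since $\dim M_i\geq 2>0$).

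For part~(c): by Proposition~\ref{p:HomogeneousVectorbundle}, $(\bbF,\alpha)$ is a homogeneous vector bundle and $\alpha$ acts through $\nabla^\bot$-parallel isometric vector bundle isomorphisms. The point is to identify $\nabla^\bot$ with the canonical connection of $(\bbF,\alpha)$. Since $\alpha$ is $\nabla^\bot$-parallel, $\nabla^\bot$ is an $\Iso(M)^0$-invariant connection on $\bbF$; but I must show it is the one coming from $\scrH$, i.e.\ that its horizontal lifts of geodesics $t\mapsto\exp(tX)\cdot p$, $X\in\frakp$, are the $\scrH$-horizontal curves. The cleanest route: a parallel section of $\bbF$ along such a geodesic $c$ is, by the Van der Waerden--Bortolotti formalism and the parallelism of $h$ (Definition~\ref{de:parallel}), of the form $t\mapsto h(X(t),Y(t))$ with $X,Y$ parallel along $c$ in $TM$; by part~(a) these are $t\mapsto T_p(\exp tX)\,x$ etc., so the $\nabla^\bot$-parallel section is $t\mapsto h(T_p(\exp tX)x,\,T_p(\exp tX)y)=\alpha(\exp tX,h(x,y))$ by~\eqref{eq:alpha}, which is precisely the formula~\eqref{eq:pardisp} for parallel transport in the canonical connection along the $\scrH$-lift $t\mapsto\exp(tX)$. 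Hence $\nabla^\bot$ is the canonical connection. Formula~\eqref{eq:54321} for $\Hol(\bbF)$ then follows from~\eqref{eq:pardisp} exactly as in part~(b): the holonomy group of the canonical connection is the image of loops at $p$, and every such loop lifts to a loop in the fiber, namely an element of $K$, acting by $\alpha(g,\cdot)|\bbF_p$. I expect the main obstacle to be the bookkeeping in part~(c) — namely being careful that ``parallel with respect to the Van der Waerden--Bortolotti connection $\Leftrightarrow$ of the form $h(X(t),Y(t))$ with $X,Y$ parallel'' uses exactly the hypothesis that $f$ is parallel, and that this identification of $\nabla^\bot$-parallel sections is what pins down the canonical connection; the rest is an assembly of standard facts about symmetric spaces and invariant connections.
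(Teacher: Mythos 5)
Your proposal is correct and takes essentially the same route as the paper: for (a) both arguments are the standard identification of the canonical connection of $TM$ with $\nabla^M$ (the paper invokes that the canonical connection is metric and torsion-free, you use the geodesic characterization); for (b) both rest on $\Hol(M)=\rho(K)$, which the paper obtains by reducing to the irreducible case and citing~\cite[Prop.~11]{J1} and which you quote as a classical fact --- note this fact requires the standing hypothesis that $M$ has no Euclidian factor, since it fails otherwise; and for (c) your computation transporting $h(x,y)$ via the parallelism of $h$ and comparing with $\alpha(\hat c(1),\cdot)$ through Eq.~\eqref{eq:pardisp} and~\eqref{eq:alpha} is exactly the paper's argument, except that you work along geodesics through $p$ plus invariance while the paper works along arbitrary curves. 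The only step worth making explicit is the equality in Eq.~\eqref{eq:54321}: as phrased, your loop-lifting argument gives only the inclusion $\Hol(\bbF)\subset\Menge{\alpha(g,\,\cdot\,)|\bbF_p}{g\in K}$, and the reverse inclusion needs that every $g\in K$ occurs as the endpoint $\hat c(1)$ of the horizontal lift of some loop at $p$, which follows from part~(b) combined with the faithfulness of $\rho$.
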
 
\begin{proof}
For~a): According to~\cite[Ch.~X.2]{KN}, 
the canonical connection $\nabla^{TM}$ is a metric and torsion-free
connection, hence $\nabla^{TM}$ is the Levi Civita connection $\nabla^M$\,. 

For~(b):
We have $\Hol(M)\cong\Hol(M_1)\times\cdots\times\Hol(M_r)$ (since $M$ the Riemannian
product of the $M_j$'s) and $K\cong
K_1\times\cdots\times K_r$ (see~\ref{eq:IsotropyGroup}). Thus
we can assume that $M$ is irreducible; then~\cite[Prop.~11]{J1} implies
that the first statement of~(b) holds on the level of Lie algebras. Thus also $\Hol(M)=\rho(K)$
holds, since the involved Lie groups are connected. 
Therefore, $K_i$ acts irreducibly  on $D^i_p$ as a consequence of the de\,Rham
Decomposition Theorem. In particular, $K_i$ acts non-trivially on $D^i_p$\,, since $\dim(M_i)\geq 2$\,.

For~c):
Let a point $p\in M$\,, a curve $c:[0,1]\to M$ with $c(0)=p$ and some $g\in\Iso(M)^0$ with $g(p)=p$ be
given\,. Let $\hat c$ denote the horizontal lift of $c$ with $\hat
c(0)=\Id$\,. Then it suffices to show that the parallel displacement
described in Eq.~\eqref{eq:pardisp} (with $\bbE:=\bbF$) is equal to the parallel
displacement with respect to $\nabla^\bot$\,. 

For this: Let $\xi\in\bbF_p$ be given; thereby, we will assume 
that $\xi=h(x,y)$ for certain $x,y\in T_pM$\,.
Then, in accordance with Part~(a) and Eq.~\eqref{eq:pardisp}, 
the parallel displacement of $x$ and $y$ in $TM$ along $c$ is given by $T\hat
c(1)\,x$ and $T\hat c(1)\,y$\,, respectively. 
Then $$\alpha(\hat c(1),\xi)\stackrel{\eqref{eq:alpha}}{=}h(T\hat c(1)\,x,T\hat c(1)\,y)$$
is the parallel displacement of $\xi$ along $c$ with respect to
$\nabla^\bot$ (since $h$ is a parallel section of $\rmL^2(TM,\bot f)$). 

The previous arguments also show that $\Hol(\bbF)$ is given by
Eq.~\eqref{eq:54321}.
\end{proof} 

The proof of the following lemma is ``straight forward'':

\begin{lemma}\label{le:HomogeneousVectorBundle1}
Let $(\bbE,\alpha)$ be a homogeneous vector bundle over $M$ and $\nabla^\bbE$
denote the corresponding canonical connection. Let $p\in M$ be a fixed point and $K$ be the corresponding 
isotropy subgroup of $\Iso(M)^0$\,.
Assigning to each
$\nabla^\bbE$-parallel section $s$ its value $s(p)\in\bbE_p$ induces a one-one
correspondence 
\begin{equation}\label{eq:correspondence}
\text{parallel sections of } \bbE\ \leftrightarrow\ K\text{-invariant elements
  of }\bbE_p\,.
 \end{equation}
\end{lemma}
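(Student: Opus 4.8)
The plan is to establish the one-one correspondence in Eq.~\eqref{eq:correspondence} by exhibiting its inverse and checking that both composites are the identity.

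First I would show that the value map $s\mapsto s(p)$ does land in the $K$-invariant subspace $\bbE_p^K$. Let $s$ be a $\nabla^\bbE$-parallel section and let $g\in K$, so $g(p)=p$. Pick any loop $c:[0,1]\to M$ based at $p$ whose $\scrH$-lift $\hat c$ with $\hat c(0)=\Id$ satisfies $\hat c(1)=g$; such a loop exists because the structure group of the principal bundle~\eqref{eq:tau} is the connected group $K$ and, by the characterization~\eqref{eq:scrH} of $\scrH$ together with~\cite[Vol.1,~p.~239]{KN}, the holonomy of $\scrH$ at $\Id$ is all of $K$ (equivalently: any $g\in K$ is reached by an $\scrH$-horizontal path, since $K$ is connected and $\frakp$ generates $\fraki(M)$ — already used implicitly in Lemma~\ref{le:CanonicalConnection}). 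Parallelness of $s$ together with Eq.~\eqref{eq:pardisp} gives $\alpha(g,s(p))=\ghdisp{0}{1}{c}{\nabla^\bbE}s(p)=s(p)$, so $s(p)\in\bbE_p^K$.

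Conversely, given $v\in\bbE_p^K$ I would define a section $s_v$ by transporting $v$ along paths: for $q\in M$ choose a curve $c$ from $p$ to $q$ and put $s_v(q):=\ghdisp{0}{1}{c}{\nabla^\bbE}v=\alpha(\hat c(1),v)$. Well-definedness is the crucial point and also the main obstacle: if $c'$ is a second curve from $p$ to $q$, then $\hat c(1)$ and $\hat c'(1)$ differ by right multiplication with some $k\in K$ (they lie over the same point $q$), i.e.\ $\hat c'(1)=\hat c(1)\,k$; here one must be careful about the side on which $K$ acts, but since the bundle~\eqref{eq:assoziierung} is associated via $(g,v)\mapsto\alpha(g,v)$ and the holonomy loop $c^{-1}*c'$ at $p$ lifts to a path from $\Id$ to $\hat c(1)^{-1}\hat c'(1)\in K$, the element $k$ is precisely realized as a holonomy element, hence $\alpha(\hat c(1),v)=\alpha(\hat c(1)\hat c(1)^{-1}\hat c'(1),v)=\alpha(\hat c'(1),v)$ using $\alpha(k,v)=v$ for $k\in K$ — wait, more carefully: $\alpha(\hat c'(1),v)=\alpha(\hat c(1)k,v)=\alpha(\hat c(1),\alpha(k,v))=\alpha(\hat c(1),v)$. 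Thus $s_v$ is well defined, and it is smooth because locally $\hat c(1)$ depends smoothly on the endpoint (choosing a smooth family of curves). By construction $s_v$ is $\nabla^\bbE$-parallel (its value along any curve is the parallel transport of its initial value) and $s_v(p)=\alpha(\Id,v)=v$.

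Finally I would check the two composites. Starting from $v\in\bbE_p^K$, the section $s_v$ has $s_v(p)=v$, so one composite is the identity. Starting from a parallel section $s$, the section $s_{s(p)}$ agrees with $s$ at $p$, and since both are $\nabla^\bbE$-parallel and parallel sections are determined by their value at one point (parallel transport along curves connecting $p$ to an arbitrary $q$), we get $s_{s(p)}=s$ everywhere. This completes the one-one correspondence. The only genuinely delicate bookkeeping is the left/right-action convention in the associated-bundle picture, which is why the author calls the proof ``straight forward''; everything else is a formal consequence of Eq.~\eqref{eq:pardisp} and the connectedness of $K$.
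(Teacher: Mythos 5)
Your argument is correct and is exactly the ``straightforward'' proof the paper omits: Eq.~\eqref{eq:pardisp} identifies $\nabla^\bbE$-parallel transport with $\alpha(\hat c(1),\cdot)$, and you correctly isolate the one substantive input for the forward direction, namely that every $g\in K$ is realized as $\hat c(1)$ for some loop at $p$ (the holonomy of the principal connection $\scrH$ is all of $K$), which indeed holds in the standing setting of Section~\ref{se:3} ($M$ simply connected without Euclidian factor) via Lemma~\ref{le:CanonicalConnection}~(b) and would fail, as it must, for a Euclidian factor. The converse direction (well-definedness of $s_v$ from $\hat c'(1)=\hat c(1)k$ with $k\in K$ and $K$-invariance of $v$, parallelity by composition of transports, and the two composites) is likewise handled correctly.
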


\paragraph{Proof of Thm.~\ref{th:decomposition}}
\begin{proof}
For~(a): Clearly,
\begin{equation}\label{eq:FirstNormalBundle3}
\bbF=\tilde\bbF+\sum_{1\leq i<j\leq r}\bbF^{ij}\;.
\end{equation}
Now let $i,j,k,l$ with $i\neq j$ and $\{i,j\}\neq \{k,l\}$ be given; 
hence, it remains to show that $\bbF^{ij}_p\bot\bbF^{kl}_p$ holds.

For this: Without loss
of generality, we will assume that $i$ is different from both $k$ and
$l$\,. Furthermore, we note that $K_i$ is a Lie subgroup of $K$ according to 
Eq.~\eqref{eq:IsotropyGroup}. Hence, on the one hand, we have the action of
$K_i$ on $\bbF_p$ through orthogonal transformations (by means of $\alpha$);
then $\bbF^{kl}_p$ and $\bbF^{ij}_p$ both are invariant subspaces of $\bbF_p$\,.

On the other hand, $K_i$ acts orthogonally on $T_pM$ by means of the isotropy
representation. Then $D^i_p$ is an irreducible invariant subspace of $T_pM$ (according to
Lemma~\ref{le:CanonicalConnection}~(b)) and $D^j_p$ is an invariant subspace on which $K_i$ acts trivially (since
$i\neq j$). Therefore, the induced action of $K_i$ on $D^i_p\otimes D^j_p$ 
(as described in Def.~\ref{de:ExteriorTensorprodukt2}) is isomorphic to the direct sum of $m_j$ copies of
$D^i_p$ (where $m_j$ denotes the dimension of $M_j$), whereby the action of
$K_i$ on each copy of $D^i_p$ is irreducible and non-trivial.

Therefore, and since $h:D^i_p\otimes D^j_p\to\bbF^{ij}_p$ is a surjective
homomorphism in accordance with Eq.~\eqref{eq:alpha}, 
$\bbF^{ij}_p$ also decomposes into a direct sum of non-trivial and irreducible subspaces,
by virtue of Lemma~\ref{le:Schur2}~(d) (here and in the following, 
we use the ``Lie group version'' of this Lemma, cf. Rem.~\ref{re:Reformulate}). 
On the other hand, since $i$ is different from both $k$ and
$l$\,, $K_i$ acts trivially on $D^k_p\otimes D^l_p$\,. Thus
$K_i$ also acts trivially on $\bbF^{kl}_p$\,, because of Eq.~\eqref{eq:alpha}.
Thus $\bbF^{ij}_p\bot\bbF^{kl}_p$\,, according to Lemma~\ref{le:Schur2}~(a)\,.

For~(b): It suffices to show that
\begin{align}\label{eq:1111}
&\bbF^\sharp_p\subset\tilde\bbF_p\;,\\
\label{eq:1112}
&\bbF^\sharp_p\bot\bbF^i_p\ \text{for}\ i=1,\ldots,r\;,\\
\label{eq:1113}
&\bbF^i_p\bot\bbF^j_p\ \text{for}\ i\neq j\;.
\end{align}

For this: Since $M$ is simply connected, $\bbF^\sharp$ is pointwise spanned
by the parallel sections of $\bbF$\,. Hence sections of
$\bbF^\sharp$ correspond uniquely to $K$-invariant elements of $\bbF_p$\,, 
according to Lemma~\ref{le:CanonicalConnection}~(a) in combination with Lemma~\ref{le:HomogeneousVectorBundle1}.
Therefore, $\bbF^\sharp_p$ is the maximal subspace of $\bbF_p$ on which $K$
acts trivially. Since we have already seen in the proof of Part~(a) 
that $\bbF^{ij}_p$ is isomorphic to a direct sum of non-trivial and irreducible
$K_i$-modules, it follows by arguments given before that
$\bbF^\sharp_p\bot\bbF^{ij}_p$ for all $i\neq j$\,; now Part~(a) implies that Eq.~\eqref{eq:1111} holds.

Let $\bbF^{ii}_p=W_0\oplus
W_1\oplus\cdots\oplus W_l$ be a decomposition into invariant subspaces such that $W_0$ is a trivial
$K_i$-module and that $W_k$ is an irreducible and non-trivial $K_i$-module
for $k=1,\ldots,l$\,. Then $W_0=\bbF^\sharp_p\cap\bbF^{ii}_p$ holds, since
$K_j$ acts trivially on $\bbF^{ii}_p$ for $j\neq i$ anyway, according to Eq.~\eqref{eq:alpha}.
Therefore, $\bbF^i_p$ is given by $W_1\oplus\cdots\oplus W_l$\,. From
Lemma~\ref{le:Schur2}~(a) we now conclude that Eq.~\eqref{eq:1112} is valid. 
Finally, since $K_i$ acts trivially on $\bbF^j_p$ for $i\neq
j$\,, we obtain from a similar argument that Eq.~\eqref{eq:1113} also holds\,.

\end{proof}

\begin{lemma}\label{le:hilf1}
We have  
\begin{align}\label{eq:hilf1}
&\forall x_i\in D^i_p,x_j\in
D^j_p,\xi\in\bbF^{ij}_p:\,S_\xi\,x_i\in  D^j_p\;,\\
\label{eq:hilf2}
&\forall x_j\in D^j_p,\xi\in\bbF^{ii}_p:\;S_\xi\,x_j\in D^j_p\;,\\
\label{eq:hilf3}
&\forall x_i,y_i\in D^i_p:\;R^N(x_i,y_i)(D^j_p\oplus\bbF^{ij}_p)\subset D^j_p\oplus\bbF^{ij}_p
\end{align}
for all $i,j=1,\ldots,r$\,.
\end{lemma}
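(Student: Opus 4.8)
The plan is to prove the three displayed assertions in order, using \eqref{eq:hilf1} and \eqref{eq:hilf2} to derive \eqref{eq:hilf3}. The only tools needed are the Weingarten relation $\g{S_\xi x}{y}=\g{h(x,y)}{\xi}$ for $x,y\in T_pM$ and $\xi\in\bot_pf$, and the fact that $T_pM=\bigoplus_{k=1}^{r}D^k_p$ is an \emph{orthogonal} direct sum (because $M$ is a Riemannian product). Consequently, to show that a vector of $T_pM$ lies in a prescribed factor $D^j_p$ it is enough to verify that it is orthogonal to $D^k_p$ for every $k\ne j$.

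For \eqref{eq:hilf1} I assume $i\ne j$ (the case $i=j$ being contained in \eqref{eq:hilf2}). Given $x_i\in D^i_p$, $\xi\in\bbF^{ij}_p$ and $y\in D^k_p$ with $k\ne j$, one has $h(x_i,y)\in\bbF^{ik}_p$; by Theorem~\ref{th:decomposition}(a) this block is orthogonal to $\bbF^{ij}_p$, either because $k\ne i$ and $\{i,k\}\ne\{i,j\}$, or because $k=i$ and then $h(x_i,y)\in\bbF^{ii}_p\subset\tilde\bbF_p$, which is orthogonal to the off-diagonal block $\bbF^{ij}_p$. Hence $\g{S_\xi x_i}{y}=0$ for all such $y$, and \eqref{eq:hilf1} follows. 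The reasoning for \eqref{eq:hilf2} is identical: for $\xi\in\bbF^{ii}_p\subset\tilde\bbF_p$, $x_j\in D^j_p$ and $y\in D^k_p$ with $k\ne j$, the vector $h(x_j,y)\in\bbF^{jk}_p$ is an off-diagonal block, hence orthogonal to $\tilde\bbF_p$ by Theorem~\ref{th:decomposition}(a), so that $\g{S_\xi x_j}{y}=0$.

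For \eqref{eq:hilf3} I would substitute $x_i,y_i\in D^i_p$ into the combined Gau{\ss}--Codazzi--Ricci identity \eqref{eq:Gauss-Ricci}, $R^N(x_i,y_i)=R^{\mathrm{sp}}(x_i,y_i)+[\fetth(x_i),\fetth(y_i)]$, and check that each summand leaves $W:=D^j_p\oplus\bbF^{ij}_p$ invariant. Since $\nabla^{\mathrm{sp}}=\nabla^M\oplus\nabla^\bot$, its curvature splits as $R^M\oplus R^\bot$; the product structure of $M$ forces $R^M(x_i,y_i)$ to annihilate $D^j_p$ when $j\ne i$ and to preserve $D^i_p$, while $R^\bot(x_i,y_i)$ preserves $\bbF^{ij}_p$ because $\bbF^{ij}$ is a $\nabla^\bot$-parallel subbundle of $\bot f$; hence $R^{\mathrm{sp}}(x_i,y_i)W\subset W$. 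For the commutator term the key point is that $\fetth(x_i)$ \emph{already} maps $W$ to $W$: by \eqref{eq:fetth} one has $\fetth(x_i)(z_j)=h(x_i,z_j)\in\bbF^{ij}_p$ for $z_j\in D^j_p$, and $\fetth(x_i)(\eta)=-S_\eta x_i\in D^j_p$ for $\eta\in\bbF^{ij}_p$ by \eqref{eq:hilf1} (by \eqref{eq:hilf2} when $j=i$); the same holds with $y_i$. Therefore $[\fetth(x_i),\fetth(y_i)]W\subset W$, and adding the two contributions yields \eqref{eq:hilf3}.

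None of these steps involves more than a line of computation; the one point needing care is the bookkeeping of which $\bbF^{ab}$-block $h$ or $\fetth$ maps into, matched against the orthogonality pattern of Theorem~\ref{th:decomposition}(a). The only mildly degenerate case is $j=i$ in \eqref{eq:hilf3}, where $W=D^i_p\oplus\bbF^{ii}_p$ is the second osculating space of the slice $f|L_i(p)$ and one appeals to \eqref{eq:hilf2} rather than \eqref{eq:hilf1}; alternatively that case also follows from Corollary~\ref{co:slice} together with \eqref{eq:ParallelSubbundle} applied to $f|L_i(p)$.
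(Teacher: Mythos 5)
Your argument is correct and essentially the paper's own proof: Eqs.~\eqref{eq:hilf1} and~\eqref{eq:hilf2} via the Weingarten relation together with the orthogonality pattern of Theorem~\ref{th:decomposition}(a), and Eq.~\eqref{eq:hilf3} via the combined equation~\eqref{eq:Gauss-Ricci}, the parallelity of $D^j$ and $\bbF^{ij}$, and the fact (from the first two identities) that $\fetth(x_i)$ preserves $D^j_p\oplus\bbF^{ij}_p$. The extra case bookkeeping you spell out (e.g.\ $j=i$) is consistent with, and only slightly more explicit than, the paper's version.
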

\begin{proof}
For Eq.~\eqref{eq:hilf1}: For each $x_k\in D^k_p$ with $k\neq j$ we have
$$\g{S_\xi\,x_i}{x_k}=-\g{h(x_i,x_k)}{\xi}=0\;,$$
according to Thm.~\ref{th:decomposition}. Hence
$S_\xi\,x_i\in D^j_p$\,.

For Eq.~\eqref{eq:hilf2}: For each $x_k\in D^k_p$ with $k\neq j$ we have
$$\g{S_\xi\,x_j}{x_k}=-\g{h(x_j,x_k)}{\xi}=0\;,$$
by means of Thm.~\ref{th:decomposition}. Hence $S_\xi\,x_j\in
D^j_p$\,.

For Eq.~\eqref{eq:hilf3}: We have $[\fetth(x_i),\fetth(y_i)]\,\bbF^{ij}_p\subset
\bbF^{ij}_p$ and $[\fetth(x_i),\fetth(y_i)]\,D^j_p\subset
D^j_p$ for all $x_i,y_i\in D^i_p$\,, according
to Eq.~\eqref{eq:hilf1}. Moreover, $R^M(x_i,y_i)\,D^j_p\subset
D^j_p$ (since $D^j$ is a parallel vector subbundle of $TM$) 
and $R^\bot(x_i,y_i)\,\bbF^{ij}_p\subset
\bbF^{ij}_p$ (since $\bbF^{ij}$ is parallel vector subbundle of $\bbF$). Now the result follows from Eq.~\eqref{eq:Gauss-Ricci}.
\end{proof}

\paragraph{Proof of Cor.~\ref{co:slice}}
\begin{proof}
Put $f_i:=f|L_i(p):L_i(p)\to N$\,; then 
$$
T_qL_i(p)=D^i_q\ \text{and}\ \bot_qf_i=\bigoplus_{j\neq i}D^j_q\oplus\bot_qf\;,
$$
holds for each $q\in M$\,. Furthermore, since $L_i(p)\subset M$ is totally geodesic, the second fundamental form of $f_i$
is given by $$h|D^i_q\times D^i_q$$ for each $q\in L_i(p)$\,; hence the first normal bundle of $f_i$ is given by
$\bbF^{ii}|L_i(p)$ (pullback of $\bbF$ to $L_i(p))$)\,. 
I claim that $\nabla^\bot$ coincides on $\bbF^{ii}|L_i(p)$ with the usual normal
connection of $f_i$\,.

For this: Let $\xi$ be a section of $\bbF^{ii}$ along $L_i(p)$\,. Then we have $\nabla^N_X\xi=\nabla^\bot_X\xi-S_\xi(X)$\,,
where $\nabla^\bot_X\xi$ again is a section of $\bbF^{ii}$ along $L_i(p)$ (because
$\bbF^{ii}$ is a parallel vector subbundle of $\bbF$) and
$S_\xi(X)$ again is a section of $D^i$ along $L_i(p)$\,, according to
Lemma~\ref{le:hilf1}~(b). Therefore, by means of the Weingarten equation, the
covariant derivative of $\xi$ with respect to the normal connection of $\bot f_i$
is given by $\nabla^\bot_X\xi$\,.
 
Now let $c:\R\to L_i(p)$ be a curve and $X,Y$ be parallel sections of $TL_i(p)$ along
$c$\,. Then $c$ is a curve into $M$ and $X,Y$ are parallel sections of $TM$ along
$c$\,, too. Hence $t\mapsto\xi(t):=h(X(t),Y(t))$ defines a parallel section of
$\bbF^{ii}$ along $c$\,, and hence, by the previous, $\xi$ is also a parallel
section of $\bot f_i$ with respect to the usual normal connection of $f_i$\,. Therefore, $f_i$ is a parallel isometric immersion. 
\end{proof}

\section{Proof of Theorem~\ref{th:Fundamental} and Corollary~\ref{co:CurvatureInvariant}}
\label{se:4}
We continue with the notation from Section~\ref{se:2.2}. For $i\in \{1,\ldots,r\}$ let $\delta_i$ denote
the characteristic function of $\{i\}$\,, i.e. 
\begin{equation}\label{eq:CharacterisiticFunction}
\delta_i(i)=1\ \text{and}\ \delta_i(j)=0\ \text{for}\ j\neq i\;.
\end{equation}

\begin{lemma}\label{le:delta-Funktionen}
\begin{enumerate}
\item
Let $0$ denote the zero-function on $\{1,\ldots,r\}$\,. Then
\begin{align}\label{eq:unit}
\so(V)_0=\Menge{&A\in\so(V)_+}{A(D^i_p)\subset D^i_p\ \text{and}\ 
A(\bbF^{ij}_p)\subset\bbF^{ij}_p\ \text{for all}\ i,j=1,\ldots,r\ \text{with}\ j\neq i\;.}
\end{align}

\item Put $\so(V)_i:=\so(V)_{\delta_i}$ for $i=1,\ldots,r$\,. Then we have 
\begin{align}\label{eq:Degree0}
\so(V)_i=\Menge{A\in\so(V)_-}{A(D^i_p)\subset\tilde\bbF_p\ \text{and}\ 
A(D^j_p)\subset\bbF^{ij}(p)\ \text{for all}\ j\neq i\;.}
\end{align}

Therefore,
\begin{equation}\label{eq:Degree1}
\fetth(x)\in\so(V)_i\ \text{for all}\ x\in D^i_p\ \text{and}\ i=1,\ldots,r\;.
\end{equation}
\item  Put
  $\so(V)_{ij}:=\so(V)_{\delta_i+\delta_j}$ for all $i,j=1,\ldots,r$\,. 
Then, in addition to Eq.~\eqref{eq:ParallelSubbundle}, we have 
\begin{align}
\label{eq:Degree2}
\forall (x,y)\in D^i_p\times D^j_p:\;R^N(x,y)|V\in\so(V)_{ij}\;.
\end{align}
In particular (since $\delta_i+\delta_i=0$ holds), 
\begin{align}\label{eq:Degree3}
\forall x,y\in D^i_p:\;R^N(x,y)|V\in\so(V)_0\;.
\end{align}
\end{enumerate}
\end{lemma}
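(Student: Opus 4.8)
The plan is to prove Lemma~\ref{le:delta-Funktionen} by unwinding the definitions of the eigenspaces $\so(V)_\delta$ in terms of the reflections $\sigma^i$, using the explicit action of each $\sigma^i$ on $D^j_p$ and $\bbF^{kl}_p$ recorded in Eq.~\eqref{eq:sigma1}-\eqref{eq:sigma5}, together with the fundamental facts~\eqref{eq:CurvatureInvariance}-\eqref{eq:Fundamental_2} and Thm.~\ref{th:Fundamental}. The point is that an element $A\in\so(V)$ lies in $\so(V)_\delta$ iff $\sigma^i A\sigma^i=(-1)^{\delta(i)}A$ for all $i$, and since the $\sigma^i$ act by $\pm\Id$ on each summand of the decomposition $V=T_pM\oplus\tilde\bbF_p\oplus\bigoplus_{k<l}\bbF^{kl}_p$ (further refined via $D^j_p\subset T_pM$), conjugation by $\sigma^i$ acts on $\Hom(W,W')$ (for $W,W'$ two such summands) by the sign $\eps_i(W)\eps_i(W')$, where $\eps_i(W)\in\{\pm1\}$ is the eigenvalue of $\sigma^i$ on $W$. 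So $\so(V)_\delta$ is spanned by those $\Hom(W,W')$-blocks (intersected with $\so(V)$) whose ``bidegree'' matches $\delta$.

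For part~(a): $\delta=0$ means $\eps_i(W)\eps_i(W')=1$ for all $i$, i.e. $W$ and $W'$ have the same $\sigma^i$-eigenvalue for every $i$. Reading off the signs from~\eqref{eq:sigma1}-\eqref{eq:sigma5}: $\sigma^i$ is $-1$ on $D^i_p$ and on each $\bbF^{ij}_p$ ($j\neq i$), and $+1$ on everything else. One checks that two summands $W,W'$ agree in all signs precisely when $W=W'$ or the pair is one of the ``coupled'' pairs $(D^i_p,\tilde\bbF_p\text{-type})$ — actually the cleanest route is: $A\in\so(V)_0$ iff $A\in\so(V)_+$ (degree even, by~\eqref{eq:Even}) and $A$ additionally commutes with each $\sigma^i$; commuting with $\sigma^i$ and preserving $V$ forces $A$ to preserve each $\sigma^i$-eigenspace, hence the intersections $D^i_p$ and $\bbF^{ij}_p$, giving~\eqref{eq:unit}. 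Conversely if $A$ preserves all $D^i_p$ and all $\bbF^{ij}_p$ then it commutes with every $\sigma^i$ (since these subspaces determine the $\pm1$-eigenspace decomposition of $\sigma^i$), so $A\in\so(V)_0$.

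For part~(b): $\delta_i$ has $\delta_i(i)=1$, $\delta_i(j)=0$ for $j\neq i$. An $A\in\so(V)_{\delta_i}$ must anticommute with $\sigma^i$ and commute with all $\sigma^j$, $j\neq i$. Anticommuting with $\sigma^i$ means $A$ swaps the $\pm1$-eigenspaces of $\sigma^i$, i.e. maps $D^i_p\oplus\bigoplus_{j\neq i}\bbF^{ij}_p$ into its orthogonal complement $\tilde\bbF_p\oplus\bigoplus_{j\neq i}D^j_p\oplus\bigoplus_{k,l\neq i}\bbF^{kl}_p$ and vice versa; commuting with $\sigma^j$ for $j\neq i$ then forces $A(D^i_p)\subset\tilde\bbF_p$ (the $+1$-part for all $\sigma^j$ inside the complement of the $\sigma^i$-$(-1)$-space) and $A(D^j_p)\subset\bbF^{ij}_p$, since $\bbF^{ij}_p$ is the unique summand that is $-1$ for $\sigma^i$, $-1$ for $\sigma^j$, $+1$ for the rest. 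That gives~\eqref{eq:Degree0}. Then~\eqref{eq:Degree1} is immediate from the definition~\eqref{eq:fetth} of $\fetth$ and Lemma~\ref{le:hilf1}: for $x\in D^i_p$ one has $\fetth(x)\,y=-S_y^{\flat}\cdots$ — more precisely $\fetth(x)(D^i_p)\subset\tilde\bbF_p$ because $h(x,D^i_p)\subset\bbF^{ii}_p\subset\tilde\bbF_p$, $\fetth(x)(D^j_p)\subset\bbF^{ij}_p$ because $h(x,D^j_p)\subset\bbF^{ij}_p$, and $\fetth(x)(\bbF^{ij}_p)\subset D^j_p$, $\fetth(x)(\bbF^{ii}_p)\subset D^i_p$ by~\eqref{eq:hilf1}-\eqref{eq:hilf2}; matching these against~\eqref{eq:Degree0} shows $\fetth(x)\in\so(V)_i$.

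For part~(c): $\so(V)_{ij}=\so(V)_{\delta_i+\delta_j}$ consists of $A\in\so(V)$ anticommuting with $\sigma^i$ and $\sigma^j$ and commuting with all other $\sigma^k$. Given $x\in D^i_p$, $y\in D^j_p$, we already know $\fetth(x)\in\so(V)_i$ and $\fetth(y)\in\so(V)_j$ from part~(b), so $[\fetth(x),\fetth(y)]\in\so(V)_{\delta_i+\delta_j}=\so(V)_{ij}$ by the grading rule~\eqref{eq:rules}. On the other hand $R^{\mathrm{sp}}(x,y)$ annihilates nothing dangerous: by the combined Gauss–Codazzi–Ricci equation~\eqref{eq:Gauss-Ricci}, $R^N(x,y)=R^{\mathrm{sp}}(x,y)+[\fetth(x),\fetth(y)]$, and $R^{\mathrm{sp}}(x,y)=R^M(x,y)\oplus R^\bot(x,y)$ preserves each of $D^k_p$ and each $\bbF^{kl}_p$ (the distributions $D^k$ and the subbundles $\bbF^{kl}$ being $\nabla^M$- resp.\ $\nabla^\bot$-parallel, by Thm.~\ref{th:decomposition}), hence $R^{\mathrm{sp}}(x,y)|V\in\so(V)_0$ by part~(a). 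Adding, $R^N(x,y)|V\in\so(V)_0+\so(V)_{ij}=\so(V)_{ij}$ (using $\so(V)_0\subset\so(V)_{ij}$ only when $i=j$; in general we get $R^N(x,y)|V\in\so(V)_{ij}$ because the $\so(V)_0$-summand has degree $0\le\delta_i+\delta_j$ only if $i=j$ — so one must argue more carefully). The cleaner argument for $i\neq j$: project $R^N(x,y)|V$ onto each graded component; the $\so(V)_0$-component equals the $\so(V)_0$-component of $R^{\mathrm{sp}}(x,y)+[\fetth(x),\fetth(y)]$, which is $R^{\mathrm{sp}}(x,y)|V$ itself (since $[\fetth(x),\fetth(y)]\in\so(V)_{ij}$ has no $\so(V)_0$-part when $i\neq j$), while simultaneously by the first Bianchi identity / Codazzi applied to $f|L_i(p)$ one shows $R^N(x,y)$ maps $V$ into the $\sigma^i$- and $\sigma^j$-anticommuting part — the slickest path is to invoke Thm.~\ref{th:Fundamental} in reverse together with~\eqref{eq:Fundamental_2} to pin down the bidegree, so that the $\so(V)_0$-piece must vanish, yielding $R^N(x,y)|V\in\so(V)_{ij}$ exactly. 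Finally~\eqref{eq:Degree3} is the special case $j=i$, where $\delta_i+\delta_i=0$, so it is nothing but Eq.~\eqref{eq:Degree3} $=$ part~(a) applied to $R^N(x,y)|V$, which is legitimate because $R^N(x,y)$ preserves $D^i_p$ and all $\bbF^{ik}_p$ by Lemma~\ref{le:hilf1}~(c) and preserves $V$ by~\eqref{eq:ParallelSubbundle}.

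\medskip

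The main obstacle I anticipate is part~(c) with $i\neq j$: one needs to show not merely that $R^N(x,y)|V$ lands in $\so(V)_0\oplus\so(V)_{ij}$ (which is easy from~\eqref{eq:Gauss-Ricci} plus parts (a),(b)), but that the $\so(V)_0$-component actually vanishes. The honest way to see this is a direct symmetry argument: the reflection $\Sigma$ associated to the pair $\{i,j\}$ — or rather the identity $\sigma^i R^N(x,y)\sigma^i = R^N(\sigma^i x,\sigma^i y)= R^N(-x,y)=-R^N(x,y)$ on $V$, valid because $T_pf$ and $h$ are $\sigma^i$-equivariant by~\eqref{eq:sigma2}, while $R^N$ is $\Iso(N)$-invariant — forces $R^N(x,y)|V\in\Eig(\Ad(\sigma^i),-1)$, and likewise for $\sigma^j$, and $R^N(x,y)|V\in\Eig(\Ad(\sigma^k),+1)$ for $k\neq i,j$; intersecting gives exactly $\so(V)_{ij}$. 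So the resolution is to run the equivariance computation $\sigma^i\circ R^N(x,y)\circ\sigma^i = R^N(\sigma^i x, \sigma^i y)$ once and read off all signs — this is the crux and should be done explicitly rather than deferred to~\eqref{eq:rules}.
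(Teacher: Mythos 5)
Parts~(a) and~(b) of your argument, and the case $i=j$ of part~(c), are correct and follow essentially the paper's route: one reads off the signs from Eq.~\eqref{eq:sigma1}--\eqref{eq:sigma5} to get Eq.~\eqref{eq:unit} and~\eqref{eq:Degree0}, gets Eq.~\eqref{eq:Degree1} from Eq.~\eqref{eq:fetth}, and for $i=j$ combines Eq.~\eqref{eq:Gauss-Ricci} with $R^{\mathrm{sp}}(x,y)|V\in\so(V)_0$ and $[\fetth(x),\fetth(y)]\in\so(V)_0$. The genuine gap is part~(c) with $i\neq j$. You correctly see that Eq.~\eqref{eq:Gauss-Ricci} only yields membership in $\so(V)_0\oplus\so(V)_{ij}$ and that the $\so(V)_0$-component must be shown to vanish, but neither of your proposed fixes closes this. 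Invoking Thm.~\ref{th:Fundamental} ``in reverse'' is circular: the paper proves that theorem \emph{using} this lemma, and it concerns $R^N(\xi,\eta)$ for normal arguments anyway. Your ``honest'' symmetry argument rests on the identity $\sigma^i\circ R^N(x,y)\circ\sigma^i=R^N(\sigma^i x,\sigma^i y)$ on $V$, justified by ``$R^N$ is $\Iso(N)$-invariant''; but $\sigma^i$ is merely a linear reflection of the osculating space $V$, not (the differential at $f(p)$ of) an isometry of $N$, and it is not known at this stage to extend to a curvature-preserving transformation of $T_{f(p)}N$ --- establishing that $\sigma^i$ is compatible with $R^N$ on $V$ is essentially equivalent to what is to be proved (or to Thm.~\ref{th:Symmetries}, which comes later and with much heavier machinery).

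The elementary missing step --- and the paper's actual argument --- is that for $(x,y)\in D^i_p\times D^j_p$ with $i\neq j$ one has $R^{\mathrm{sp}}(x,y)|V=0$: indeed $R^M(x,y)=0$ because $M$ is the Riemannian product of the $M_k$'s, and then $R^\bot(x,y)|\bbF_p=0$ because for a parallel immersion $R^\bot(x,y)\,h(z,w)=h(R^M(x,y)\,z,w)+h(z,R^M(x,y)\,w)$ (see \cite[Prop.~4~(d)]{J1}) and $\bbF_p$ is spanned by the values $h(z,w)$. Hence $R^N(x,y)|V=[\fetth(x),\fetth(y)]$, which lies in $\so(V)_{\delta_i+\delta_j}=\so(V)_{ij}$ directly by part~(b) and the grading rule~\eqref{eq:rules}; the $\so(V)_0$-part is identically zero rather than something to be killed indirectly. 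If you insist on the conjugation route, you would have to verify the $\sigma^i$-equivariance of $R^M$, of $R^\bot|\bbF_p$ (again via the displayed formula) and of $[\fetth(\cdot),\fetth(\cdot)]$ separately, which reduces to the same computation, or first prove Thm.~\ref{th:Symmetries} --- possible without circularity, but a detour.
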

\begin{proof}
Using Eq.~\eqref{eq:sigma1}-\eqref{eq:sigma5}, the proof
of Eq.~\eqref{eq:unit} and~\eqref{eq:Degree0} is straightforward. 
Then Eq.~\eqref{eq:Degree1} immediately follows from Eq.~\eqref{eq:fetth} and~\eqref{eq:Degree0}. 

For Eq.~\eqref{eq:Degree2} and~\eqref{eq:Degree3}: Note that the curvature tensor
for the split-connection of $f^*TN$ (see Def.~\ref{de:split-parallel}) is given by 
$R^{\mathrm{sp}}(x,y)=R^M(x,y)\oplus R^\bot(x,y)$ for all $x,y\in T_pM$\,,
hence $R^{\mathrm{sp}}(x,y)(V)\subset V\ \text{and}\ R^{\mathrm{sp}}(x,y)|V\in
\so(V)_0$ for all $x,y\in T_pM$ as a consequence of Eq.~\eqref{eq:unit} and since Eq.~\eqref{eq:FirstNormalBundle}-\eqref{eq:FirstNormalBundle2} 
are parallel vector subbundles of $\bot f$\,. Therefore, and by means of
Eq.~\eqref{eq:Gauss-Ricci} and~\eqref{eq:Degree1}, Eq.~\eqref{eq:Degree3}
now follows. Moreover, in case $i\neq j$\,,
$R^M(x,y)=0$ for all $(x,y)\in D^i_p\times D^j_p$\,, hence, according to~\cite[Prop.~4~(d)]{J1}, 
$$
R^\bot(x_1,y_1)h(x_2,y_2)=h(R^M(x_1,y_1)\,x_2,y_2)+h(x_2,R^M(x_1,y_1)\,y_2)=0
$$
for all $(x_1,y_1)\in D^i_p\times D^j_p$\,, $(x_2,y_2)\in T_pM\times
T_pM$\,, i.e. $R^\bot(x_1,y_1)|\bbF_p=0$\,. Consequently,
$R^N(x,y)|V=[\fetth(x),\fetth(y)]$ for all $(x,y)\in D^i_p\times
 D^j_p$\,, according to Eq.~\eqref{eq:Gauss-Ricci}; therefore, and by means of arguments
given before, now Eq.~\eqref{eq:Degree2} also follows for $i\neq j$\,.
\end{proof}

\paragraph{Proof of Thm.~\ref{th:Fundamental}}\label{se:4.1}
\begin{proof}
According to~\cite[Lemma~5]{J1} (see in particular Ed.~(60) there), for each
$p\in M$ the
following Equation holds on $V$ for all $x_1,\ldots x_4\in T_pM$\,:
\begin{align}\notag
[\fetth(x_1),[\fetth(x_2),R^N(x_3,x_4)]]=&R^N(\fetth(x_1)\fetth(x_2)\,x_3,x_4)+R^N(x_3,\fetth(x_1)\fetth(x_2)\,x_4)\\
&+R^N(\fetth(x_1)\,x_3,\fetth(x_2)\,x_4)+R^N(\fetth(x_2)\,x_3,\fetth(x_1)\,x_4)\;.
\label{eq:ZweiteAbleitung}
\end{align}

Furthermore, Eq.~\eqref{eq:fetth},\eqref{eq:hilf1} and~\eqref{eq:hilf2} imply that 
\begin{align}\label{eq:SimpleRelation1}&\fetth(D^i_p)\,\fetth(D^j_p)(D^i_p)=\fetth(D^i_p)\,\fetth(D^i(0))(D^j_p)
\subset D^j_p\;,\\
&\fetth(D^i_p)\,\fetth(D^j_p)(D^j_p)\subset D^i_p
\label{eq:SimpleRelation2}
\end{align}  
for all $i,j=1,\ldots,r$\,.
Using that $R^N(h(x_j,x_i),h(x_i,x_j))=0$ (because of the symmetry of $h_p$)\,, Eq.~\eqref{eq:ZweiteAbleitung}
implies that for all $x_i,y_i\in D^i_p$ and $x_j,y_j\in D^j_p$
the following equations hold on $V$\,,
\begin{align}
R^N(h(x_i,x_i),h(y_j,y_j))
\label{eq:Fundamental0}
=&[\fetth(x_i),[\fetth(y_j),R^N(x_i,y_j)]]
-R^N(\fetth(x_i)\,\fetth(y_j)\,x_i,y_j)-R^N(x_i,\fetth(x_i)\,\fetth(y_j)\,y_j)\;,\\
\notag
R^N(h(x_i,x_j),h(y_i,y_j))
=&[\fetth(x_i),[\fetth(y_j),R^N(x_j,y_i)]]-R^N(\fetth(x_i)\,\fetth(y_j)\,x_j,y_i)-R^N(x_j,\fetth(x_i)\,\fetth(x_j)\,y_i)\\&-R^N(h(x_i,y_i),h(x_j,y_j))\;.\label{eq:Fundamental1}
\end{align}
In order to establish Eq.~\eqref{eq:Fundamental3}, first assume that $\xi,\eta\in\tilde\bbF_p$\,. Furthermore, without loss of generality we
can assume that there exists $(x,y)\in D^i_p\times D^j_p$ such that
$\xi=h(x,x)$\,, $\eta=h(y,y)$ (since $h_p$ is symmetric).
Then Eq.~\eqref{eq:Fundamental3} is an immediate consequence
of Eq.~\eqref{eq:rules},\eqref{eq:Degree1},\eqref{eq:Degree2},\eqref{eq:SimpleRelation1},\eqref{eq:SimpleRelation2} and~\eqref{eq:Fundamental0}.

Now assume that $\xi,\eta\in\bbF^{ij}_p$\,. 
Using Eq.~\eqref{eq:Fundamental1} in combination with the
previous, Eq.~\eqref{eq:Fundamental3} now follows by similar arguments as above.
\end{proof}

\paragraph{Proof of Cor.~\ref{co:CurvatureInvariant}}
\begin{proof}
Use Eq.~\eqref{eq:unit} in combination with Thm.~\ref{th:Fundamental}.
\end{proof}

\section{Proof of Theorem~\ref{th:Symmetries} and Theorem~\ref{th:hol}}
\label{se:5}
The proof of the following basic lemma is left to the reader:

\begin{lemma}[Continuation of Lemma~\ref{le:HomogeneousVectorBundle1}]\label{le:HomogeneousVectorBundle2}
Let $(\bbE,\alpha)$ be a homogeneous vector bundle over $M$ and $\nabla^\bbE$
denote the corresponding canonical connection. Let $p\in M$ be a fixed point and $K$ be the corresponding 
isotropy subgroup of $\Iso(M)^0$\,. 

\begin{enumerate}
\item For each $\sigma\in\Iso(M)$\,, the ``pull back bundle'' $\sigma^*\bbE\to M$ (whose total space is the fiber product
$M\times_\sigma\bbE$) is a homogeneous vector bundle over $M$\,, too, 
by means of the action $$\Iso(M)^0\times\sigma^*\bbE\to\sigma^*\bbE,(g,v)\mapsto
\alpha(\sigma\circ g\circ\sigma^{-1},v)\;.$$
Moreover, the corresponding canonical connection is the one which is induced
by $\nabla^\bbE$ in the usual way.
\item If $(\tilde \bbE,\tilde\alpha)$ is a second homogeneous vector bundle over $M$\,,
  then the induced vector bundle $\rmL^1(\bbE,\tilde\bbE)$ 
is a homogeneous vector bundle over $M$\,, too, by  means of the action 
$$\Iso(M)^0\times\rmL^1(\bbE,\tilde\bbE)\to\rmL^1(\bbE,\tilde\bbE),(g,\ell:\bbE_p\to\tilde\bbE_p)\mapsto
\tilde\alpha_g|\tilde\bbE_p\circ\ell\circ\alpha_g^{-1}|\bbE_{g(p)}\;.$$
Moreover, if $\nabla^{\tilde\bbE}$ denotes the canonical connection of $\tilde\bbE$\,, then the canonical connection on $\rmL^1(\bbE,\tilde\bbE)$ 
is the one which is induced by $\nabla^\bbE$ and $\nabla^{\tilde\bbE}$ in the usual way.
\end{enumerate}
\end{lemma}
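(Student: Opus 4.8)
The plan is to derive both parts from Eq.~\eqref{eq:pardisp}, i.e.\ from the fact that the canonical connection of a homogeneous vector bundle $(\bbE,\alpha)$ over $M$ with base point $p$ is the unique linear connection whose parallel displacement along any curve $c$ with $c(0)=p$ is $v\mapsto\alpha(\hat c(1),v)$, where $\hat c$ is the $\scrH$-horizontal lift of $c$ through $\Id$. Since $\scrH$ depends only on the principal bundle $\Iso(M)^0\to M,\ g\mapsto g(p)$, and on the Cartan decomposition $\fraki(M)=\frakk\oplus\frakp$ at $p$, the very same $\scrH$ (and hence the same lift $\hat c$) governs the canonical connection of \emph{every} homogeneous vector bundle over $M$. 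I would also use, as the excerpt records, that the canonical connection is independent of the base point, so Eq.~\eqref{eq:pardisp} remains valid with $p$ replaced by any $q$ and $\scrH$ by the analogous distribution $\scrH^{q}$.

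For~(a): First I would dispatch the routine verification that $\beta(g,v):=\alpha(\sigma g\sigma^{-1},v)$ is an action of $\Iso(M)^0$ on $\sigma^*\bbE$ through vector bundle isomorphisms with base map $g$ (this uses only that $g\mapsto\sigma g\sigma^{-1}$ is an automorphism of $\Iso(M)$ preserving the identity component, and that $\beta(g,\cdot)$ maps $\sigma^*\bbE_q=\bbE_{\sigma(q)}$ onto $\bbE_{\sigma(g(q))}=\sigma^*\bbE_{g(q)}$), so $(\sigma^*\bbE,\beta)$ is a homogeneous vector bundle. The pullback connection on $\sigma^*\bbE$ is characterized by: a section over $c$ is parallel iff the corresponding section of $\bbE$ over $\sigma\circ c$ is $\nabla^\bbE$-parallel; hence its parallel displacement along a curve $c$ with $c(0)=p$ is the $\nabla^\bbE$-parallel displacement along $\sigma\circ c$, which starts at $\sigma(p)$. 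To evaluate that, I would observe that the geodesic symmetry at $\sigma(p)$ equals $\sigma\circ s_p\circ\sigma^{-1}$ (where $s_p$ is the geodesic symmetry at $p$), because both fix $\sigma(p)$ and have differential $-\Id$ there; consequently $\Ad(\sigma)$ carries the Cartan decomposition at $p$ onto the one at $\sigma(p)$, so the conjugation automorphism $C_\sigma:=\sigma(\cdot)\sigma^{-1}$ of $\Iso(M)^0$ carries $\scrH$ onto $\scrH^{\sigma(p)}$ (both being $\Iso(M)^0$-invariant distributions with value $\Ad(\sigma)\frakp=\frakp_{\sigma(p)}$ at $\Id$). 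Therefore $C_\sigma\circ\hat c$ is the $\scrH^{\sigma(p)}$-lift of $\sigma\circ c$ through $\Id$, and Eq.~\eqref{eq:pardisp} with base point $\sigma(p)$ yields that the $\nabla^\bbE$-parallel displacement along $\sigma\circ c$ is $v\mapsto\alpha(\sigma\hat c(1)\sigma^{-1},v)=\beta(\hat c(1),v)$. Comparing this with Eq.~\eqref{eq:pardisp} applied to $(\sigma^*\bbE,\beta)$ identifies the pullback connection with the canonical connection of $(\sigma^*\bbE,\beta)$.

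For~(b): Again I would first note that $\gamma(g,\ell):=\tilde\alpha_g\circ\ell\circ\alpha_g^{-1}$ is an action of $\Iso(M)^0$ on $\rmL^1(\bbE,\tilde\bbE)$ through vector bundle isomorphisms with base map $g$, so $(\rmL^1(\bbE,\tilde\bbE),\gamma)$ is a homogeneous vector bundle. For the connection, I would use that the parallel displacement of the induced connection on $\rmL^1(\bbE,\tilde\bbE)$ along $c$ is $\ell_0\mapsto P^{\tilde\bbE}_c\circ\ell_0\circ(P^\bbE_c)^{-1}$, with $P^\bbE_c,P^{\tilde\bbE}_c$ the parallel displacements of $\nabla^\bbE,\nabla^{\tilde\bbE}$ along $c$. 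For $c(0)=p$, Eq.~\eqref{eq:pardisp} gives $P^\bbE_c=\alpha(\hat c(1),\cdot)$ and $P^{\tilde\bbE}_c=\tilde\alpha(\hat c(1),\cdot)$ with one and the same lift $\hat c$; hence the displacement equals $\ell_0\mapsto\tilde\alpha_{\hat c(1)}\circ\ell_0\circ\alpha_{\hat c(1)}^{-1}=\gamma(\hat c(1),\ell_0)$, which by Eq.~\eqref{eq:pardisp} applied to $(\rmL^1(\bbE,\tilde\bbE),\gamma)$ is the parallel displacement of its canonical connection; so the two connections coincide.

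I expect part~(a) to be the only genuinely delicate point, the obstacle being precisely that $\sigma$ need not lie in $\Iso(M)^0$: one cannot conjugate within the acting group, and must instead go through the observation that $\Ad(\sigma)$ intertwines the Cartan decompositions — equivalently the horizontal distributions $\scrH$ — at $p$ and at $\sigma(p)$, together with the base-point independence of the canonical connection. Everything else (that $\beta$ and $\gamma$ are actions by vector bundle isomorphisms, and the standard formula for the induced connection on $\rmL^1(\bbE,\tilde\bbE)$) is routine.
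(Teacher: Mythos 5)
The paper gives no proof of this lemma at all (it is explicitly left to the reader), so there is nothing to compare your argument against; judged on its own, your proof is correct and fills the gap along the expected lines: both parts reduce to Eq.~\eqref{eq:pardisp}, using in~(a) that conjugation by $\sigma$ carries the Cartan decomposition, hence the horizontal distribution $\scrH$\,, at $p$ onto the one at $\sigma(p)$ (via $s_{\sigma(p)}=\sigma\circ s_p\circ\sigma^{-1}$), together with the base-point independence of the canonical connection that the paper asserts, and in~(b) that one and the same horizontal lift $\hat c$ governs the parallel displacements in $\bbE$\,, $\tilde\bbE$ and $\rmL^1(\bbE,\tilde\bbE)$\,. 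The only step you leave tacit is that agreement of parallel displacements along all curves emanating from the fixed base point $p$ already forces the two connections to coincide; this follows from the standard concatenation argument (transport along an arbitrary curve segment is a quotient of two transports along curves starting at $p$) and deserves one sentence.
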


The proof of the following lemma is also straightforward:

\begin{lemma}\label{le:splitting}
Let $V'$ be a Euclidian vector space and $W\subset V'$ be a subspace. Let
$\sigma\:V'\to V'$ denote the reflection in $W^\bot$ and
$\so(V')=\so(V')_+\oplus\so(V')_-$ be the decomposition into 
the eigenspaces of $\Ad(\sigma)$ (see Remark~\ref{re:finer}). Then the natural map
$\so(V')_-\to\rmL(W,W^\bot),A\mapsto A|W$ is a linear isomorphism.
\end{lemma}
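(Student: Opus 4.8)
The plan is to unwind the definitions and reduce everything to the classical fact that $\so(V')$ decomposes, as a vector space, according to whether an endomorphism commutes or anticommutes with the reflection $\sigma$. First I would write $V'=W\oplus W^\bot$ and represent any $A\in\so(V')$ in block form with respect to this orthogonal splitting, say $A=\begin{pmatrix}A_{11}&A_{12}\\ A_{21}&A_{22}\end{pmatrix}$, where $A_{11}\in\so(W)$, $A_{22}\in\so(W^\bot)$, and $A_{21}=-A_{12}^{\,t}\in\rmL(W,W^\bot)$ (skew-symmetry of $A$ forces the diagonal blocks to be skew and the off-diagonal blocks to be negative transposes of each other). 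Since $\sigma=\Id_W\oplus(-\Id_{W^\bot})$, conjugation $\Ad(\sigma)A=\sigma A\sigma$ acts as $+1$ on the diagonal blocks and $-1$ on the off-diagonal blocks; hence $\so(V')_-$ consists exactly of those $A$ with $A_{11}=0$ and $A_{22}=0$, i.e.\ $A=\begin{pmatrix}0&-A_{21}^{\,t}\\ A_{21}&0\end{pmatrix}$.

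Next I would observe that the map $\so(V')_-\to\rmL(W,W^\bot)$, $A\mapsto A|W$, sends such an $A$ precisely to its lower-left block $A_{21}$ (because $A|W$ has image $A_{21}(W)\subset W^\bot$ and the diagonal block acting on $W$ is zero). This map is manifestly linear. It is injective because $A_{21}=0$ forces $A=0$ (the other blocks being already zero on $\so(V')_-$), and it is surjective because given any $B\in\rmL(W,W^\bot)$ the endomorphism $\begin{pmatrix}0&-B^{\,t}\\ B&0\end{pmatrix}$ is skew-symmetric, lies in $\so(V')_-$ by the block characterization above, and restricts on $W$ to $B$. A quick dimension count ($\dim\so(V')_-=\dim W\cdot\dim W^\bot=\dim\rmL(W,W^\bot)$) gives an alternative way to conclude bijectivity once injectivity is shown.

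There is essentially no obstacle here; the only mild care needed is to be explicit that ``the reflection $\sigma$ in $W^\bot$'' in this lemma means $\sigma|W=\Id$, $\sigma|W^\bot=-\Id$ (matching the sign convention $\sigma^W$ from Section~\ref{se:2.2}), so that the ``$-$'' eigenspace of $\Ad(\sigma)$ is the off-diagonal part rather than the diagonal part — consistent with Remark~\ref{re:finer}, where $\so(V)_-$ is the part that does not preserve $\bbF_p$. With that convention fixed, the argument is the short block-matrix computation sketched above, and I would present it in three lines: block decomposition, identification of $\so(V')_-$ with the off-diagonal blocks, and the isomorphism $A\mapsto A_{21}$.

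\begin{proof}
Write $V'=W\oplus W^\bot$ and decompose $A\in\so(V')$ into blocks $A=\left(\begin{smallmatrix}A_{11}&A_{12}\\ A_{21}&A_{22}\end{smallmatrix}\right)$ with respect to this orthogonal sum; skew-symmetry of $A$ means $A_{11}\in\so(W)$, $A_{22}\in\so(W^\bot)$ and $A_{21}=-A_{12}^{\,t}$\,. Since $\sigma=\Id_W\oplus(-\Id_{W^\bot})$, the map $\Ad(\sigma):A\mapsto\sigma A\sigma$ fixes $A_{11}$ and $A_{22}$ and negates $A_{12},A_{21}$\,. Hence $\so(V')_-$ consists precisely of the endomorphisms of the form $\left(\begin{smallmatrix}0&-B^{\,t}\\ B&0\end{smallmatrix}\right)$ with $B\in\rmL(W,W^\bot)$\,. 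For such $A$ one has $A|W=B$ (viewed as a map into $V'$ with image in $W^\bot$), so the natural map $\so(V')_-\to\rmL(W,W^\bot),A\mapsto A|W$, is $A\mapsto B$\,; it is linear, injective (as $B=0$ forces $A=0$) and surjective (given $B$, the endomorphism $\left(\begin{smallmatrix}0&-B^{\,t}\\ B&0\end{smallmatrix}\right)$ lies in $\so(V')_-$ and restricts on $W$ to $B$). Thus it is a linear isomorphism.
\end{proof}
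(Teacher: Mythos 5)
Your proof is correct; the paper itself gives no argument for this lemma (it is explicitly left to the reader as straightforward), and your block-matrix computation is precisely the intended routine verification. One small remark: the worry about the sign convention for $\sigma$ is moot, since $\Ad(\sigma)=\Ad(-\sigma)$, so the eigenspace decomposition of $\so(V')$ — and hence the lemma — is unaffected by which of $W$, $W^\bot$ the reflection negates.
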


\paragraph{Proof of Thm.~\ref{th:Symmetries}}
\begin{proof}
\textbf{1. Step:} 
Let $p$ be a fixed point and let $\sigma^i_p$ be defined according to Eq.~\eqref{eq:Basemap}; then $\sigma_p^i(p)=p$ holds.
We will show that $\bbF$ admits a parallel vector bundle
isomorphism $I_p^i$ along the base map $\sigma_p^i$ such that 
\begin{align}
&I_p^i|\bbF_p=\sigma^i|\bbF_p\;,\\
\label{eq:Compatible}
&\forall q\in M,\,x,y\in T_qM:\;I_p^ih(x,y)=h(T\sigma_p^i\,x,T\sigma_p^i\,y)
\end{align}
holds for $i=1,\ldots,r$\,.

For this: As a consequence of Lemma~\ref{le:CanonicalConnection}~(c),
Lemma~\ref{le:HomogeneousVectorBundle1} and Lemma~\ref{le:HomogeneousVectorBundle2}, 
every parallel vector bundle homomorphism of $\bbF$ along $\sigma^i_p$ 
uniquely corresponds to a parallel section of
$\bbE:=\rmL^1(\bbF,\sigma_p^{i*}\bbF)$\,, where the latter is seen as a homogeneous
vector bundle equipped with the corresponding canonical connection as described
in Lemma~\ref{le:HomogeneousVectorBundle2}.
We let $K$ be the isotropy subgroup of
$\Iso(M)$ at $p$\,; then $\bbF^{ij}_p$ and $\bbF^{ij}_p$ are subspaces
of $\bbF_p$ which are invariant under the action of $K$\,; hence the 
linear map $\sigma^i|\bbF_p$ (defined by Eq.~\eqref{eq:sigma3}-\eqref{eq:sigma5})
is a $K$-invariant element of $\bbE_p$\,. 
Therefore, according to Eq.~\eqref{eq:correspondence}, $\sigma^i|\bbF_p$ uniquely extends
to a parallel section $I_p^i$ of $\bbE$\,, again by means of
Lemma~\ref{le:HomogeneousVectorBundle1}. Then the base map $\sigma^i_p$ is an involution on
$M$ and $I_p^i|\bbF_p=\sigma^i|\bbF_p$ is a reflection of
$\bbF_p$\,, hence the parallelity of $I_p^i$ implies that $I_p^i|\bbF_p$ is a
vector bundle involution. It remains to
establish Eq.~\eqref{eq:Compatible}:

Let $c:[0,1]\to M$ be a curve
with $c(0)=p$ and $c(1)=q$ and
let $X,Y$ be parallel sections of $TM$ along $c$ with $X(1)=x$
and $Y(1)=y$\,. Consider the two sections $S_1$ and $S_2$ of $\bot^1f$ along the curve
$\sigma_p^i\circ c$ defined by $S_1(t):=I_p^i(h(X(t),Y(t))$ and $S_2(t):=h(T\sigma_p^i\,X(t),T\sigma_p^i\,Y(t))$\,.
Using the parallelity of $f$ and the fact that $\sigma_p^i$ is an isometry of $M$\,,
we see that both $S_1$ and $S_2$ are parallel sections. Furthermore
$S_1(0)=S_2(0)$ holds, in accordance with Eq.~\eqref{eq:sigma2};
therefore $S_1=S_2$\,, in particular Eq.~\eqref{eq:Compatible} holds.

\textbf{2. Step:}
Put $\Sigma_p^i:=T\sigma_p^i\oplus I_p^i$\,. Then $\Sigma_p^i$
  is a split-parallel vector bundle involution along $\sigma_p^i$\,, and
  $\Sigma_p^i|V$ is the reflection $\sigma^i$
  described by Eq.~\eqref{eq:sigma1}-\eqref{eq:sigma5}. I claim that $\Sigma_p^i$
  is also $\nabla^N$-parallel:

\eqref{eq:Compatible} in combination with Lemma~\ref{le:splitting} implies that
\begin{equation}
\label{eq:Compatible_2}
\forall q\in M,x\in T_qM,v\in\osc_qf:\;\Sigma_p^i(\fetth(x)\,v)=\fetth(T\sigma_p^i\,x)(\Sigma_p^i\,v)\;.
\end{equation}
Since $\Sigma_p^i$ is split-parallel, Eq.~\eqref{eq:Compatible_2} combined with the
Gau{\ss}-Weingarten equation Eq.~\eqref{eq:first_Gauss} 
implies that $\Sigma_p^i$ is $\nabla^N$-parallel, too. The result follows.

The last assertion of the theorem follows from Eq.~\eqref{eq:Vertauschen} in
combination with the parallelity of $\Sigma_p^i$\,. 
\end{proof}

\paragraph{Proof of Thm.~\ref{th:hol}}
\begin{proof}
For~(a): Let $p$ be a fixed point, put $V:=\osc_pf$ and let $\sigma^i$ denote the reflection of
$V$ defined by Eq.~\eqref{eq:sigma1}-\eqref{eq:sigma5}.
Then it suffices to show that 
\begin{equation}\label{eq:hol_2}
\Ad(\sigma^i)(\frakh)=\frakh\;.
\end{equation}
Let $\Sigma_p^i$ denote the symmetry of $\osc f$ described in Thm.~\ref{th:Symmetries}, and let $c:[0,1]\to M$ be a loop with $c(0)=p$\,.
Remember that $\Sigma_p^i$ is a $\nabla^N$-parallel vector bundle isomorphism of $\osc f$ along $\sigma_p^i$ (see Eq.~\eqref{eq:Basemap}) with
$\Sigma_p^i|\osc_pf=\sigma^i$\,, in accordance with Thm.~\ref{th:Symmetries}; hence
\begin{equation}
\sigma^i\circ\ghdisp{0}{1}{c}{N}|V=\ghdisp{0}{1}{\sigma_p^i\circ c}{N}\circ \sigma^i\;.
\end{equation}
From the last line in combination with Eq.~\eqref{eq:Hol(osc_M)} we
conclude that $\Hol(\osc f)$ is invariant under group conjugation with
$\sigma^i$\,; thus Eq.~\eqref{eq:hol_2} holds.

For~(b): In accordance with~\cite[Thm.~3]{J1} 
\begin{equation}\label{eq:OuterDerivation2}
[\fetth(x),\frakh]\subset\frakh
\end{equation}
holds for each $x\in T_pM$\,. Now Eq.~\eqref{eq:OuterDerivation1} follows as an immediate
consequence of Part~(a) in combination with Eq.~\eqref{eq:Degree1} and the rules for graded Lie algebras (see Eq.~\eqref{eq:rules}).

For~(c): First, let us see that r.h.s.\ of~\eqref{eq:hol_plus} is contained in
$\frakh_0$\,.

For this: We have $R^N(x,y)(V)\subset V$ and $R^N(x,y)|V\in\frakh_0$ for all $x,y\in
D^j_p$ and $j=1,\ldots,r$\,, 
because of Eq.~\eqref{eq:Degree3} and the Theorem of
Ambrose/Singer (cf.~\cite[Proof of Thm.~3]{J1}). 
Now let $\xi\in \bbF^{jj}_p$\,, $\eta\in\bbF^{l,l}_p$ be given; thereby, we will 
assume that there exist $(x_j,x_k)\in D^j_p\times D^k_p$ such that $\xi=h(x_j,x_j)$ and $\eta=h(x_k,x_k)$\,.
Then, in accordance with Eq.~\eqref{eq:CurvatureInvariance} and~\eqref{eq:Fundamental3}
we have $R^N(\xi,\eta)(V)\subset V$ and $R^N(\xi,\eta)|V\in\so(V)_0$\,;
moreover, as a consequence of Eq.~\eqref{eq:Fundamental0} combined
with Eq.~\eqref{eq:OuterDerivation2}, even $R^N(\xi,\eta)|V\in\frakh_0$ holds.
Additionally using Eq.~\eqref{eq:Fundamental1}, we obtain the same 
result for all $\xi,\eta\in \bbF^{jl}_p$ and $j,l=1,\ldots,r$\,, which proves
our claim.

In order to finally establish Eq.~\eqref{eq:hol_plus}, we introduce the following linear subspaces of $\so(V)$\,,
\begin{align*}
&\frakj_0:=\Spann{R^N(y_1,y_2)|V}{y_1,y_2\in T_pM}\ \text{and}
\ \frakj_2:=\Spann{[\fetth(x_1),[\fetth(x_2),[R^N(y_1,y_2)|V]]]}{x_1,x_2,y_1,y_2\in T_pM}\;.
\end{align*}
Then, according to~\cite[Proof of Thm.~3]{J1},
we have $\frakh_+=\frakj_0+\frakj_2$\,. By means of Eq.~\eqref{eq:Even}, $\frakh_0\subset\frakh_+$ and,
furthermore, using Eq.~\eqref{eq:Degree1}-\eqref{eq:Degree2}, 
$R^N(x_j,x_k)|V$ and $[\fetth(x_j),[\fetth(x_k),[R^N(x_l,x_m)|V]]]$ are
homogeneous elements of degree $\delta_j+\delta_k$ and $\delta_j+\delta_k+\delta_l+\delta_m$\,, respectively, for all
$(x_j,x_k,x_l,x_m)\in D^j_p\times D^k_p\times D^l_p\times D^m_p$ and
$j,k,l,m=1,\ldots r$\,.  
Thus $\frakh_0$ is necessarily generated (as a vector space) by the following sets,
\begin{align}\label{eq:generators_0}
&S_{\!jj}:=\Menge{R^N(x_j,y_j)|V}{(x_j,y_j)\in D^j_p\times D^j_p\,}\,,\\
\label{eq:generators_2}
&S_{\!jkjk}:=\Menge{[\fetth(x_j),[\fetth(x_k),[R^N(y_j,y_k)|V]]]}{(x_j,y_k),(y_j,x_k)\in
  D^j_p\times D^k_p}\,,\\
\label{eq:generators_1}
&S_{\!jjkk}:=\Menge{[\fetth(x_j),[\fetth(y_j),[R^N(x_k,y_k)|V]]]}{(x_j,y_j)\in D^j_p\times D^j_p\,,(x_k,y_k)\in
  D^k_p\times D^k_p}
\end{align}
with $j,k=1,\ldots,r$\,.

Clearly, $S_{\!jj}$ is contained in to r.h.s.\ of~\eqref{eq:hol_plus} for all $j=1,\ldots,r$\,. Moreover,
according to Eq.~\eqref{eq:ZweiteAbleitung}, 
\begin{align*}
[\fetth(x_j),[\fetth(x_k),R^N(y_j,y_k)]]
\notag
=&-R^N(\fetth(x_j)\,\fetth(x_k)\,y_j,y_k)+R^N(y_j,\fetth(x_j)\,\fetth(x_k)\,y_k)\\&+R^N(h(x_j,y_j),h(x_k,y_k))
+R^N(h(x_k,y_j),h(x_j,y_k))\qmq{(on $V$)\;,}\\
[\fetth(x_j),[\fetth(y_j),R^N(x_k,y_k)]]
\notag
=&-R^N(\fetth(x_j)\,\fetth(y_j)\,x_k,y_k)+R^N(x_k,\fetth(x_j)\,\fetth(y_j)\,y_k)\\&+R^N(h(x_j,x_k),h(y_j,y_k))
+R^N(h(x_j,y_k),h(y_j,x_k))\qmq{(on $V$)}
\end{align*}
for all $(x_j,y_j)\in D^j_p\times D^j_p\,,(x_k,y_k)\in
  D^k_p\times D^k_p$\,;
hence, also using Eq.~\eqref{eq:SimpleRelation1} and~\eqref{eq:SimpleRelation2}, 
both $S_{\!jjll}$ and $S_{\!jljl}$ are contained in
r.h.s.\ of~\eqref{eq:hol_plus}. This finishes the proof of Eq.~\eqref{eq:hol_plus}.
 \end{proof}

\section{Proof of Theorem~\ref{th:MainResult}}
\label{se:6}
\textbf{\boldmath Throughout this section, we assume that $N$ is a symmetric space which is
  of compact or non-compact type, that $M$ is a simply connected symmetric
  space without Euclidian factor whose de\,Rham decomposition is given by
   $M_1\times\cdots\times M_r$ and that $f:M\to N$ is a parallel isometric immersion.} 
As before, we let $L_i$ denote the canonical foliation of $M$ and $D^i:=TL_i$
be the corresponding distribution for $i=1,\ldots,r$\,.
Keeping some $p\in M$ fixed, $T_pM$ and $D_i(p)$ both are seen
as subspaces of $T_{f(p)}N$ by means of $T_pf$\,.

Because the curvature tensor of $M$ is parallel and since $L_i(p)$ is totally
geodesic in $M$\,, the Theorem of Ambrose/Singer implies that the holonomy Lie
algebra of $L_i(p)$ is the subalgebra of $\so(D^i_p)$ which is given by
\begin{align}
\label{eq:hol(Leaf)}
\hol(L_i(p)):=\Spann{R^M(x,y)|D^i_p}{x,y\in D^i_p}
\end{align}
for $i=1,\ldots,r$\,. Furthermore, since $\Hol(L_i(p))$ is connected,
Lemma~\ref{le:CanonicalConnection}~(b) in combination with
Remark~\ref{re:Switch} yields:

\begin{lemma}\label{le:deRham}
$W_i$ is an irreducible $\hol(L_i(p))$-module for $i=1,\ldots,r$\,.
\end{lemma}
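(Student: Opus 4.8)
The plan is to derive this directly from Lemma~\ref{le:CanonicalConnection}~(b) together with the elementary fact that a connected Lie group and its Lie algebra admit exactly the same invariant subspaces in any finite-dimensional representation; no new geometric input is needed.

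First I would note that the product slice $L_i(p)=\{p_1\}\times\cdots\times M_i\times\cdots\times\{p_r\}$, endowed with the metric induced from the Riemannian product $M=M_1\times\cdots\times M_r$, is isometric to the de\,Rham factor $M_i$, and that under this identification the subspace $W_i=D^i_p$ corresponds to $T_{p_i}M_i$. In particular $\Hol(L_i(p))=\Hol(M_i)$ as subgroups of $\rmO(W_i)=\rmO(T_{p_i}M_i)$, and (as recorded in the prose preceding the statement, using $\nabla^M R^M=0$ and the Ambrose/Singer theorem) $\hol(L_i(p))$ is precisely the Lie algebra of this connected group.

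Next I would invoke Lemma~\ref{le:CanonicalConnection}~(b): the isotropy representation maps $K_i$ isomorphically onto $\Hol(M_i)$, and $K_i$ acts irreducibly on $D^i_p$. Hence $\Hol(L_i(p))$ acts irreducibly on $W_i$. Finally, since $L_i(p)$ is simply connected the group $\Hol(L_i(p))$ is connected, so every $\hol(L_i(p))$-invariant subspace of $W_i$ is already $\Hol(L_i(p))$-invariant (Remark~\ref{re:Switch}); irreducibility of the group representation therefore passes to the Lie algebra $\hol(L_i(p))$, which is the assertion.

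I do not anticipate a real obstacle; the argument is essentially a citation. The only subtlety is to apply the group/Lie-algebra correspondence in the correct direction — it is the connectedness of $\Hol(L_i(p))$, guaranteed by simple-connectedness of the slice, that lets ``irreducible as a group'' descend to ``irreducible as a Lie algebra''.
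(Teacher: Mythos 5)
Your proposal is correct and follows exactly the paper's route: identify $\Hol(L_i(p))$ with $\Hol(M_i)$, cite Lemma~\ref{le:CanonicalConnection}~(b) for irreducibility of the group action on $D^i_p$, and use connectedness of $\Hol(L_i(p))$ (via Remark~\ref{re:Switch}) to pass from the group to $\hol(L_i(p))$, with Ambrose/Singer identifying the latter as in the surrounding text. Nothing is missing.
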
 
We put $V:=\osc_pf$ and $\so(V)^i:=\so(V)_0\oplus\so(V)_i$
(see Eq.~\eqref{eq:unit} and~\eqref{eq:Degree0}); then $\so(V)^i$ is a $\Z/2\Z$-graded Lie algebra.
Furthermore, let $\frakh$ be the Lie algebra from Def.~\ref{de:HolonomyLieAlgebra} 
and $\frakh^i$ be its subalgebra which was introduced in
Thm.~\ref{th:hol}\,. Then there is the splitting
$\frakh^i=\frakh_0\oplus\frakh_i$ turning $\frakh^i$ into a graded subalgebra of $\so(V)^i$\,.

We consider the usual
positive definite scalar product on $\so(V)$\,,
\begin{equation}\label{eq:Trace}
\g{A}{B}:=-\trace(A\circ B)\;,
\end{equation}
and let $P_i:\so(V)\to\frakh^i$ denote the orthogonal
projection onto $\frakh^i$\,.

In accordance with Eq.~\eqref{eq:Degree1},
we introduce the linear map $A^i:D^i_p\to\so(V)^i$ given by
 \begin{equation}\label{eq:Difference1}
A^i(x):=\fetth(x)-P_i(\fetth(x))
\end{equation}
for each $x\in D^i_p$ and $i=1,\ldots,r$\,.

\begin{definition}\label{de:centralizer}
Let $V'$ be a Euclidian vector space. For each subset $X\subset\so(V')$ we
have the corresponding centralizer in $\so(V')$\,, 
\begin{equation}\label{eq:centralizer}
\frakc(X):=\Menge{A\in\so(V')}{\forall B\in X:A\circ B=B\circ A}\,.
\end{equation}
\end{definition}

\begin{lemma}\label{le:InjectiveOrZero}
We have
\begin{equation}\label{eq:OuterDerivation3}
A^i(x)\in\frakc(\frakh^i)\cap\so(V)_i
\end{equation}
for each $x\in D^i_p$ and $i=1,\ldots,r$\,. Furthermore, the following is
true:
Either $A^i$ is an injective map or $\fetth(x)\in\frakh_i$ holds for each $x\in D^i_p$\,. 
\end{lemma}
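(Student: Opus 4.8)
The plan is to prove the two assertions of Lemma~\ref{le:InjectiveOrZero} separately, exploiting the graded structure established in Thm.~\ref{th:hol} together with the definition of $A^i$ as the ``component of $\fetth$ orthogonal to $\frakh^i$''. First I would verify Eq.~\eqref{eq:OuterDerivation3}. That $A^i(x)\in\so(V)_i$ is immediate from Eq.~\eqref{eq:Degree1}, which says $\fetth(x)\in\so(V)_i$ for $x\in D^i_p$, together with the fact that $\frakh^i=\frakh_0\oplus\frakh_i$ is a graded subspace of $\so(V)^i=\so(V)_0\oplus\so(V)_i$ (so the orthogonal projection $P_i$ preserves the $\scrA$-grading, because the grading decomposition~\eqref{eq:SplittingIntoEigenspaces} is orthogonal with respect to the trace form~\eqref{eq:Trace} — the involutions $\Ad(\sigma^i)$ are orthogonal). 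Hence $P_i(\fetth(x))\in\frakh_i\subset\so(V)_i$ and the difference stays in $\so(V)_i$. For the centralizer claim I would argue as follows: by Eq.~\eqref{eq:OuterDerivation1}, $[\fetth(x),\frakh^i]\subset\frakh^i$, and trivially $[\frakh^i,\frakh^i]\subset\frakh^i$ since $\frakh^i$ is a subalgebra; therefore $[A^i(x),\frakh^i]\subset\frakh^i$. On the other hand, writing any $B\in\frakh^i$ and using that $\langle\cdot,\cdot\rangle$ is $\ad$-invariant, for $C\in\frakh^i$ one has $\langle[A^i(x),B],C\rangle=-\langle B,[A^i(x),C]\rangle=\langle[C,A^i(x)],B\rangle$; but $A^i(x)$ is by construction orthogonal to $\frakh^i$, so pairing $[A^i(x),B]$ (which lies in $\frakh^i$) against the $\frakh^i$-part must be reconciled with $A^i(x)\perp\frakh^i$. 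More cleanly: for all $B\in\frakh^i$, $\langle[A^i(x),B],A^i(x)\rangle=-\langle B,[A^i(x),A^i(x)]\rangle=0$ trivially, so instead I would compute $\langle[\fetth(x),B],C\rangle$ for $B,C\in\frakh^i$, note it equals $\langle[P_i\fetth(x),B],C\rangle$ because $[\fetth(x),B]\in\frakh^i$ (by~\eqref{eq:OuterDerivation1}) and $A^i(x)=\fetth(x)-P_i\fetth(x)\perp\frakh^i$ forces $\langle[A^i(x),B],C\rangle=0$; running this over all $C\in\frakh^i$ and using $[A^i(x),B]\in\frakh^i$ gives $[A^i(x),B]=0$. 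This is the crux and I expect it to be the main technical point.

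Next I would prove the dichotomy. Suppose $A^i$ is not injective. By linearity the kernel $\Kern(A^i)$ is a nonzero linear subspace of $D^i_p$. The key observation is that $\Kern(A^i)$ is invariant under the holonomy algebra $\hol(L_i(p))$ of the slice, which by Eq.~\eqref{eq:hol(Leaf)} is spanned by the $R^M(x,y)|D^i_p$ with $x,y\in D^i_p$, and by Lemma~\ref{le:deRham} acts irreducibly on $W_i=D^i_p$. Hence $\Kern(A^i)=D^i_p$, i.e.\ $A^i\equiv 0$, which is exactly the statement that $\fetth(x)=P_i(\fetth(x))\in\frakh_i\subset\frakh$ for all $x\in D^i_p$. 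To see the invariance of the kernel I would differentiate the relation $A^i(x)\in\frakc(\frakh^i)$: for $x\in\Kern(A^i)$ and $y,z\in D^i_p$, the element $R^N(y,z)|V$ lies in $\frakh_0\subset\frakh^i$ (by Thm.~\ref{th:hol}(c), or Eq.~\eqref{eq:Degree3} plus Ambrose--Singer), so it commutes with $A^i(\cdot)$; combining this with the Jacobi-type identity Eq.~\eqref{eq:ZweiteAbleitung} (or rather the simpler second-order relation governing how $\fetth$ and curvature interact) should express $A^i(R^M(y,z)x)$ in terms of brackets of $A^i(x)$ with elements of $\frakh^i$, all of which vanish. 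Concretely, since $\fetth$ is essentially parallel-transported in a controlled way and $R^M(y,z)x$ arises from the curvature action on $D^i_p$, one gets $A^i(R^M(y,z)x)=[\text{something in }\frakh^i, A^i(x)]+A^i(x)\text{-linear terms}=0$ whenever $A^i(x)=0$.

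I would round this off by being careful about one subtlety: to run the irreducibility argument I must know that $\Kern(A^i)$ really is a $\hol(L_i(p))$-submodule, and for that the identity relating $A^i(R^M(y,z)x)$ to $A^i(x)$ must hold on the nose, not merely up to terms in $\frakh^i$ — but since $A^i$ takes values in $\frakc(\frakh^i)$ and those correction terms are brackets $[\frakh^i, A^i(x)]$ which vanish identically when $A^i(x)=0$, this is fine. The expected main obstacle is establishing cleanly that $P_i$ commutes with the relevant structure (grading, and the $\ad(\fetth(x))$-action modulo $\frakh^i$) so that $A^i$ genuinely lands in the centralizer and transforms equivariantly under curvature — everything else (the eigenspace orthogonality, the irreducibility input from Lemma~\ref{le:deRham}, the bookkeeping of degrees) is routine given the results already proved.
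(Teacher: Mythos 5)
Your treatment of the first assertion, Eq.~\eqref{eq:OuterDerivation3}, is correct and complete: the graded pieces $\so(V)_\delta$ are pairwise orthogonal for the form~\eqref{eq:Trace} (each $\Ad(\sigma^j)$ preserves it), so by \eqref{eq:Degree1} the projection satisfies $P_i(\fetth(x))\in\frakh_i$ and hence $A^i(x)\in\so(V)_i$; moreover $[A^i(x),B]\in\frakh^i$ for every $B\in\frakh^i$ (by \eqref{eq:OuterDerivation1} and the fact that $\frakh^i$ is a subalgebra), while $\g{[A^i(x),B]}{C}=\g{A^i(x)}{[B,C]}=0$ for all $C\in\frakh^i$ because $A^i(x)\bot\frakh^i$, whence $[A^i(x),B]=0$. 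This is exactly the kind of argument the paper intends (it refers to the proof of Prop.~11 in \cite{J2} for both assertions).

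The dichotomy, however, contains a genuine gap. Your strategy --- show that $\Kern(A^i)$ is a $\hol(L_i(p))$-submodule of $D^i_p$ and invoke the irreducibility of Lemma~\ref{le:deRham} --- is the right one, but the invariance of the kernel is precisely the technical content of the statement, and you only assert it: Eq.~\eqref{eq:ZweiteAbleitung} is not the relevant identity here, and the formula you predict (everything reducing to brackets of elements of $\frakh^i$ with $A^i(x)$) is not what comes out. What is needed is the following. Since $h$ is parallel (cf.\ \cite[Prop.~4~(d)]{J1}, as used in the proofs of Lemma~\ref{le:delta-Funktionen} and Lemma~\ref{le:hilf3}), the operator $R^{\mathrm{sp}}(y,z)=R^M(y,z)\oplus R^\bot(y,z)$ acts on $V$ as a derivation compatible with $h_p$ and $S$, and a short computation with \eqref{eq:fetth} gives $[R^{\mathrm{sp}}(y,z),\fetth(x)]=\fetth(R^M(y,z)\,x)$; combined with \eqref{eq:Gauss-Ricci} this yields, on $V$, $\fetth(R^M(y,z)\,x)=[R^N(y,z)|V,\fetth(x)]-[[\fetth(y),\fetth(z)],\fetth(x)]$. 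Now take $y,z\in D^i_p$ and $x\in\Kern(A^i)$, i.e.\ $\fetth(x)\in\frakh^i$. The first bracket lies in $\frakh^i$ because $R^N(y,z)|V\in\frakh_0$ (Thm.~\ref{th:hol}~(c)); the second lies in $\frakh^i$ by the Jacobi identity together with two applications of \eqref{eq:OuterDerivation1}: first $[\fetth(y),\fetth(x)],[\fetth(z),\fetth(x)]\in\frakh^i$, then bracketing once more with $\fetth(y)$ resp.\ $\fetth(z)$ stays in $\frakh^i$. Hence $\fetth(R^M(y,z)\,x)\in\frakh^i\cap\so(V)_i=\frakh_i$, so $R^M(y,z)\,x\in\Kern(A^i)$, and by \eqref{eq:hol(Leaf)} the kernel is $\hol(L_i(p))$-invariant; Lemma~\ref{le:deRham} then gives the alternative. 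Note in particular that the cubic correction term does not vanish because it is a bracket of $A^i(x)$ with $\frakh^i$ --- it is controlled by \eqref{eq:OuterDerivation1} --- and this mechanism, together with the derivation identity coming from the parallelity of $h$, is exactly what your sketch leaves out.
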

\begin{proof}
For Eq.~\eqref{eq:OuterDerivation3}:
By means of the splitting $\frakh^i=\frakh_0\oplus\frakh_i$ and
using Eq.~\eqref{eq:OuterDerivation1},\eqref{eq:Degree0} and~\eqref{eq:Degree1},
we can use analogous arguments as in~\cite[Proof of Prop.~11]{J2}.

For the last assertion, 
we use Lemma~\ref{le:deRham} in combination with Eq.~\eqref{eq:OuterDerivation1} and~\eqref{eq:hol_plus}
in order to apply similar arguments as in~\cite[Proof of Prop.~11]{J2}.
 \end{proof}

\begin{proposition}\label{p:Prop1001}
Suppose that $\dim(M_i)\geq 3$ and that the symmetric space $\bar M_i$ defined
by Eq.~\eqref{eq:tg} is irreducible (cf.\ Fn.~\ref{fn:irreducible}) for
$i=1,\ldots,r$\,. Then the following estimate is valid,
\begin{equation}
\label{eq:Estimate3}
\dim\big(\frakc(\frakh^i)\cap\so(V)_i\big)\leq 2\;.
\end{equation}
\end{proposition}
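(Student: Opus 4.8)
The plan is to fix $i$ and analyse the space $\frakc(\frakh^i)\cap\so(V)_i$ by exploiting how the factor $M_i$ sits inside $N$ via the totally geodesic submanifold $\bar M_i=\exp^N(D^i_p)$. First I would restrict attention to the ``$i$-th osculating block'': by Eq.~\eqref{eq:Degree0} every $A\in\so(V)_i$ is determined by $A|D^i_p$, which maps $D^i_p$ into $\tilde\bbF_p$ and $D^j_p$ into $\bbF^{ij}_p$ for $j\neq i$. Since $f|L_i(p)$ is a parallel isometric immersion (Cor.~\ref{co:slice}) with first normal bundle $\bbF^{ii}$, the relevant data is really that of the single parallel immersion $f_i:M_i\to N$ and its osculating space $V_i:=D^i_p\oplus\bbF^{ii}_p$. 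The restriction map $\so(V)_i\to\rmL(D^i_p,\tilde\bbF_p)$ is injective (Lemma~\ref{le:delta-Funktionen}(b) together with Lemma~\ref{le:splitting}), so I would push the whole computation down to $V_i$ and to $\frakh^i$ acting there.

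Next I would identify $\frakh^i$, or rather the relevant part of it, with the extrinsic holonomy algebra of $f_i$. By Theorem~\ref{th:hol}(c), $\frakh_0$ is generated by curvature terms $R^N(x,y)|V$ with $x,y\in D^j_p$ and by the terms $R^N(\xi,\eta)|V$ with $\xi,\eta\in\tilde\bbF_p$ or in a single $\bbF^{kl}_p$; the ones relevant for the action on $V_i$ are $R^N(x,y)$ with $x,y\in D^i_p$ and $R^N(\xi,\eta)$ with $\xi,\eta\in\bbF^{ii}_p$, and these generate $\hol(\osc f_i)$. The hypothesis $\dim(M_i)\geq 3$ guarantees $M_i$ is not a surface, so the holonomy algebra $\hol(L_i(p))\subset\so(D^i_p)$ acts irreducibly (Lemma~\ref{le:deRham}) and nontrivially, and moreover $\hol(L_i(p))$ is one of the classical irreducible holonomy algebras of an irreducible symmetric space of rank $\geq 1$; in particular it is \emph{not} abelian. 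An element $A\in\frakc(\frakh^i)\cap\so(V)_i$ must in particular commute with all of $R^N(x,y)|V=R^M(x,y)|D^i_p$ (the split part of the curvature, cf.\ Eq.~\eqref{eq:Gauss-Ricci} and the proof of Lemma~\ref{le:delta-Funktionen}), i.e.\ $A|D^i_p\in\rmL(D^i_p,\tilde\bbF_p)$ intertwines the $\hol(L_i(p))$-action on $D^i_p$ with its action on $\tilde\bbF_p$.

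Now comes the main point, which is also where I expect the real obstacle. I would decompose $\tilde\bbF_p$ as a $\hol(L_i(p))$-module (equivalently a $K_i$-module, by Lemma~\ref{le:CanonicalConnection}) and count, via Schur's Lemma (Lemma~\ref{le:Schur2} from the appendix), the multiplicity of $D^i_p$ in it; each $A$ contributes an intertwiner, so $\dim(\frakc(\frakh^i)\cap\so(V)_i)$ is bounded by (essentially) that multiplicity times the dimension of the commutant of $D^i_p$ as an irreducible module (which is $1$, $2$, or $4$ according to real/complex/quaternionic type). The irreducibility hypothesis on $\bar M_i$ is what forces this count down to at most $2$: if $\bar M_i=\exp^N(D^i_p)$ is an irreducible symmetric space, then the curvature action $R^N(\cdot,\cdot)|D^i_p$ is that of an irreducible symmetric space, and the commutant of $\hol(\bar M_i)$ acting on $D^i_p\cong T\bar M_i$ has dimension $\le 2$ (it is $\R$ in the real case and $\C$ in the Hermitian case, and the quaternionic/other cases are excluded for an irreducible \emph{symmetric} space of rank $\ge 1$ because the holonomy then acts with trivial commutant or exactly the Hermitian $\C$). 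One must be careful to rule out the possibility that $A$ maps $D^i_p$ into a copy of $D^i_p$ sitting inside $\bbF^{ij}_p$ for some $j\neq i$ rather than inside $\bbF^{ii}_p$; but such an $A$ would fail to commute with elements of $\frakh^i$ coming from $R^N(\xi,\eta)$ with $\xi,\eta\in\bbF^{ij}_p$ (using the curvature-invariance of Corollary~\ref{co:CurvatureInvariant} and the degree bookkeeping of Theorem~\ref{th:hol}), so I would invoke Eq.~\eqref{eq:hol_plus} in full to eliminate those components. The hard part is precisely assembling these module-theoretic multiplicity bounds into the clean inequality $\dim(\frakc(\frakh^i)\cap\so(V)_i)\le 2$, mirroring the argument of \cite[Proof of Prop.~11]{J2} but now with the extra $\bbF^{ij}$-components present; the hypothesis $\dim(M_i)\ge 3$ is what makes $\hol(L_i(p))$ large enough (non-abelian, genuinely irreducible) for the Schur-type count to bite.
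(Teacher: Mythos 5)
Your proposed reduction rests on a false step: by Eq.~\eqref{eq:Degree0} together with Lemma~\ref{le:splitting}, an element $A\in\so(V)_i$ is determined by its restriction to $T_pM$, and that restriction consists of the component $A|D^i_p:D^i_p\to\tilde\bbF_p$ \emph{and} the components $A|D^j_p:D^j_p\to\bbF^{ij}_p$ for every $j\neq i$; the map $\so(V)_i\to\rmL(D^i_p,\tilde\bbF_p)$ is therefore not injective, and "pushing the whole computation down to $V_i=D^i_p\oplus\bbF^{ii}_p$" discards exactly the off-diagonal blocks. The paper instead uses the isomorphism \eqref{eq:Blockform2} onto $\bigoplus_j\so(V_{ij})_-$ and must prove the vanishing \eqref{eq:Estimate1} of $\frakc(\frakh^i|V_{ij})\cap\so(V_{ij})_-$ for $j\neq i$, which occupies all of Section~\ref{se:6.2}. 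Your one-line remedy -- that such a component would fail to commute with $R^N(\xi,\eta)$ for $\xi,\eta\in\bbF^{ij}_p$ -- does not work in general: when $\exp^N(\bbF^{ij}_p)$ is a flat of $N$ these curvature operators act trivially on $\bbF^{ij}_p$ and impose no condition, and one has to argue instead with the subalgebra generated by $R^N(D^j_p,D^j_p)$, curvature isotropy (Lemma~\ref{le:curved_flat}), and a case analysis (Cor.~\ref{co:four_cases}) of the de\,Rham type of $\exp^N(\bbF^{ij}_p)$, driven by the representation theory of $K_i\times K_j$ on $\bbF^{ij}_p$ (Prop.~\ref{p:possibilities}, Lemma~\ref{le:ExteriorTensorprodukt}) and the inequality $\tfrac12\,m_i\,m_j>m_j$ -- this, not non-abelianness of the holonomy, is where $\dim(M_i)\geq 3$ enters. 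Note also that the danger you propose to "rule out", namely $A(D^i_p)\subset\bbF^{ij}_p$, is already excluded by the grading; the genuine danger is the component $A|D^j_p\to\bbF^{ij}_p$, which your reduction silently sets to zero.

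For the diagonal block your Schur-type count is not a proof: the fact that the commutant of $\hol(\bar M_i)$ on $D^i_p$ is at most $2$-dimensional does not bound the multiplicity of $D^i_p$ inside $\tilde\bbF_p$, which is what controls $\Hom_\frakg(D^i_p,\tilde\bbF_p)$. The paper gets the bound \eqref{eq:Estimate2} by showing (Lemma~\ref{le:hilf3}, Cor.~\ref{co:hilf2}) that every such intertwiner takes values in $\bbF^{ii}_p$ and restricts to an element of $\frakc(\bar\frakh^i)\cap\so(V_i)_-$, where $\bar\frakh^i$ is the extrinsic holonomy algebra of the slice immersion $f_i=f|L_i(p)$ (Cor.~\ref{co:slice}), and then invoking \cite[Prop.~12]{J2}; that citation is the quantitative heart of the argument, is precisely where the irreducibility of $\bar M_i$ is used, and is what your sketch leaves open. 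Two further slips: $R^N(x,y)|V$ is neither $R^M(x,y)$ nor the split curvature -- they differ by $[\fetth(x),\fetth(y)]$ by Eq.~\eqref{eq:Gauss-Ricci} -- and restricted to $D^i_p$ it is the curvature of the totally geodesic $\bar M_i$, so the algebra relevant for the centralizer is $\hol(\bar M_i)$, not the intrinsic $\hol(L_i(p))$.
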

A proof of this Proposition will be given in Section~\ref{se:6.1}.

\paragraph{Proof of Thm.~\ref{th:MainResult}}
\begin{proof}
Let $f:M\to N$ be a parallel isometric immersion from the simply connected
Riemannian product space $M\cong M_1\times\cdots\times M_r$\,, where
$M_i$ is some irreducible symmetric space of dimension at least 3 for
$i=1,\ldots,r$\,.

For ``$(e)\Rightarrow (d)$'':  
Here we have $\dim(\frakc(\frakh^i)\cap\so(V)_i)<\dim(M_i)$\,, as a consequence of
Prop.~\ref{p:Prop1001}; hence Eq.~\eqref{eq:OuterDerivation3} implies that the
linear map $A^i$ defined by Eq.~\eqref{eq:Difference1} is
not injective. But then already $A^i=0$ according to
Lemma~\ref{le:InjectiveOrZero}, i.e.\ the availability of the relation
\begin{equation}\label{eq:fetth_in_hol2}
\forall x\in D^i_p:\fetth(x)\in\frakh_i
\end{equation}
is ensured for $i=1,\ldots,r$\,. This implies that Eq.~\eqref{eq:fetth_in_hol} holds.

``$(d)\Rightarrow (c)$'' follows from~\cite[Thm.~2 and Lemma~15]{J2}.

``$(c)\Rightarrow (b)$'' is trivial.

For ``$(b)\Rightarrow (a)$'':
Let $M$ be a symmetric space and $f:M\to N$ be an isometric immersion such
that there exists a connected Lie subgroup $G\subset\Iso(N)$ which
acts transitively on $f(M)$\,; in this situation, one can show that $f:M\to f(M)$ is a
covering map and there exists an equivariant Lie group homomorphism
$\tau:G\to\Iso(M)^0$ such that $\tau(G)$ acts transitively on $M$ (the proof
for this fact is standard and left to the reader).
We now see from Eq.~\eqref{eq:IsotropyGroup} that $\tau(G)$ acts transitively on $L_i(p)$ and therefore 
$f(L_i(p))$ is a homogeneous submanifold of $N$\,, too.

By contradiction, now we additionally assume that $f(L_i(p))$ 
is contained in some flat of $N$\,. Since $N$ is of compact or of
non-compact type, every homogenous submanifold of $N$ which is contained in some flat of
$N$\,, is intrinsically flat, too (cf.~\cite[Prop.~1]{J2}). Hence the
previous implies that $L_i(p)$ is a Euclidian space. Therefore, $M_i$ is also a
Euclidian space, which is contrary to our assumptions.

For ``$(a)\Rightarrow (e)$'': Since $f|L_i(p)$ is a parallel isometric
immersion defined on the simply connected, irreducible symmetric space
$L_i(p)$ (in accordance with Cor.~\ref{co:slice}) and, furthermore, $N$ is
of compact or non-compact type, this direction follows from~\cite[Thm.~5]{J2}.
 \end{proof}

\subsection{Proof of Proposition~\ref{p:Prop1001}}\label{se:6.1}
\textbf{\boldmath Besides the conventions made at the beginning of Section~\ref{se:6},
from now on we also assume that $m_i:=\dim(M_i)$ is at least $3$ and that the
symmetric space $\bar M_i$ defined by Eq.~\eqref{eq:tg} is irreducible (in
the sense of Fn.~\ref{fn:irreducible}) for 
$i=1,\ldots,r$\,.} In the following, we keep $p\in M$ fixed and abbreviate $W:=T_pM$ and
$W_i:=D^i_p$\,; then $W$ and $W_i$ both are seen as subspaces of $T_{f(p)}N$\,. 

In this situation, since $\bar M_i$ is totally geodesically embedded, 
$W_i$ is a curvature invariant subspace of $T_{f(p)}N$ and
$$
W_i\times W_i\times W_i\to W_i\;, (x,y,z)\mapsto R^N(x,y)\,z
$$
is the curvature tensor of $\bar M_i$ at $p$ (since $\bar M_i$ is totally
geodesic) for $ i=1,\ldots,r$\,. Analogous to Eq.~\eqref{eq:hol(Leaf)} and
Lemma~\ref{le:deRham} we have:

\begin{lemma}\label{le:hol(barMi)}
The holonomy Lie
algebra of $\bar M_i$ is the subalgebra of $\so(W_i)$ which is given by
\begin{equation}\label{eq:hol(barMi)}
\hol(\bar M_i):=\Spann{R^N(x,y)|W_i}{x,y\in W_i}\;.
\end{equation}
Furthermore, $W_i$ is an irreducible $\hol(\bar M_i)$-module for $i=1,\ldots,r$\,.
\end{lemma}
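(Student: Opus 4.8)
The plan is to reproduce, for the ambient space $N$, the two-step argument that yielded Eq.~\eqref{eq:hol(Leaf)} and Lemma~\ref{le:deRham}. First I would record the two structural facts about $\bar M_i$ that were already noted before the statement: by a result due to E.~Cartan applied to the curvature-invariant subspace $W_i=D^i_p$ (see Eq.~\eqref{eq:tg}), $\bar M_i$ is a totally geodesically embedded symmetric space. Being totally geodesic, its second fundamental form vanishes, so the Gau{\ss} equation gives $R^{\bar M_i}(x,y)\,z=R^N(x,y)\,z$ for all $x,y,z\in W_i=T_p\bar M_i$; being symmetric, $\bar M_i$ has $\nabla^{\bar M_i}$-parallel curvature tensor.

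Next I would invoke the Theorem of Ambrose/Singer, exactly as in the derivation of Eq.~\eqref{eq:hol(Leaf)}. Since $\nabla^{\bar M_i}R^{\bar M_i}=0$, parallel transport along any loop based at $p$ intertwines the curvature tensor with itself, so every Ambrose/Singer generator $P_\gamma^{-1}\circ R^{\bar M_i}(P_\gamma x,P_\gamma y)\circ P_\gamma$ equals $R^{\bar M_i}(x,y)=R^N(x,y)|W_i$. Hence $\hol(\bar M_i)$ is precisely the span appearing on the right-hand side of Eq.~\eqref{eq:hol(barMi)}, which is the first assertion.

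For the irreducibility of $W_i$ as a $\hol(\bar M_i)$-module I would argue as in the proof of Lemma~\ref{le:deRham}. The holonomy Lie algebra is a local invariant, hence it does not change when $\bar M_i$ is replaced by its universal Riemannian covering $\widetilde{\bar M_i}$; by hypothesis and Footnote~\ref{fn:irreducible}, $\widetilde{\bar M_i}$ is a simply connected irreducible symmetric space, so its holonomy group is the identity component of the isotropy group acting through the isotropy representation, and this representation on $W_i$ is irreducible by the de\,Rham Decomposition Theorem (cf.\ Def.~\ref{de:deRham}), using Lemma~\ref{le:CanonicalConnection}~(b) together with Remark~\ref{re:Switch}. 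The only points that require care are the legitimacy of the Gau{\ss}-equation step --- which rests on $\bar M_i$ being totally geodesic, already available before the lemma --- and the fact that passing to the universal cover really leaves $\hol(\bar M_i)$ unchanged, so that the weaker notion of irreducibility from Footnote~\ref{fn:irreducible} is enough; everything else is the verbatim analogue of the intrinsic statements for the leaves $L_i(p)$.
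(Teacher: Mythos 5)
Your proposal is correct and matches what the paper intends: the paper gives no separate proof but declares the lemma ``analogous'' to Eq.~\eqref{eq:hol(Leaf)} and Lemma~\ref{le:deRham}, i.e.\ exactly your steps --- total geodesy of $\bar M_i$ identifies $R^{\bar M_i}$ with $R^N|W_i$, Ambrose/Singer plus parallelism of the curvature tensor yields Eq.~\eqref{eq:hol(barMi)}, and irreducibility follows from the standing assumption on $\bar M_i$ via Lemma~\ref{le:CanonicalConnection}~(b) and Remark~\ref{re:Switch}. Your extra care in passing to the universal cover (so that the irreducibility notion of Fn.~\ref{fn:irreducible} applies while the holonomy Lie algebra is unchanged) is a welcome precision the paper leaves implicit; only note that the Ambrose/Singer generators are conjugations along arbitrary curves emanating from $p$, not just loops, which does not affect your argument since $R$ is parallel along every curve.
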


In accordance with Eq.~\eqref{eq:FirstNormalBundle} and~\eqref{eq:FirstNormalBundle2},
we also set $U:=\bbF_p$\,, $U_{ij}:=\bbF^{ij}_p$ for $i\neq j$ 
and $\tilde U:=\tilde\bbF_p$ \,; then $V:=\osc_pf=W\oplus U$ is the second osculating space
of $f$ at $p$\,. Now we introduce the following linear spaces $V_{ij}$ for $i,j=1,\ldots,r$ \,,
 \begin{align}\label{eqVii}
&V_{ii}:=W_i\oplus\tilde U\;,\\
\label{eqVij}
&V_{ij}:=W_i\oplus U_{ij}\ \ \text{for}\ i\neq j\;.
\end{align} 
Then we have $A(V_{ij})\subset(V_{ij})$ for each
$A\in\frakh^i$ and $i,j=1,\ldots,r$\,.
Hence we can consider the corresponding centralizers $\frakc(\frakh^i|V_{ij})\subset\so(V_{ij})$ (cf. Def.~\ref{de:centralizer})\,.

In order to prove Eq.~\eqref{eq:Estimate3}, we will additionally need the following two
estimates for $i=1,\ldots,r$\,,
\begin{align}
\label{eq:Estimate1}
&\frakc(\frakh^i|V_{ij})\cap\so(V_{ij})_-=\{0\}\ \ \text{for}\ i\neq j\;,\\
\label{eq:Estimate2}
&\dim\big(\frakc(\frakh^i)|V_{ii})\cap\so(V_{ii})_-\big)\leq 2\;.
\end{align}

A proof of these equations is given in Section~\ref{se:6.2} and~\ref{se:6.3}. 

\paragraph{Proof of Proposition~\ref{p:Prop1001}}
\begin{proof}
Recall that 
\begin{equation}\label{eq:Blockform0}
U=\tilde U\oplus\bigoplus_{i\neq j}U_{ij}
\end{equation}
is an orthogonal sum decomposition, according to
Thm.~\ref{th:decomposition}. Therefore, and because of Eq.~\eqref{eq:unit}
and~\eqref{eq:Degree0}, we have
\begin{align}
\label{eq:Blockform1}
&\bigoplus_{j=1,\ldots,r}V_{ij}\subset V\ \text{(as an orthogonal sum) and}\
A(V_{ij})\subset V_{ij}\ \text{for each}\ A\in\so(V)^i\;.
\end{align}
Moreover, the splitting of vector spaces~\eqref{eqVii},\eqref{eqVij} induces the splitting
$\so(V_{ij})=\so(V_{ij})_+\oplus\so(V_{ij})_-$ (as described in
Remark~\ref{re:finer}) such that, as a consequence of Eq.~\eqref{eq:Degree0},
\begin{equation}\label{eq:Blockform2}
\so(V)_i\to\bigoplus_{j=1,\ldots,r}\so(V_{ij})_-\,,\ A\mapsto
\bigoplus_{j=1,\ldots,r}A|V_{ij}\ \text{is an isomorphism\;.}
\end{equation}
Then Eq.~\eqref{eq:Blockform1} and~\eqref{eq:Blockform2} imply that
$$
\frakc(\frakh^i)\cap\so(V)_i\cong\bigoplus_{j=1}^r \frakc(\frakh^i|V_{ij})\cap\so(V_{ij})_-\;;
$$
now Eq.~\eqref{eq:Estimate3} follows from Eq.~\eqref{eq:Estimate1} and~\eqref{eq:Estimate2}.
 \end{proof}

\subsection{Proof of Equation~\eqref{eq:Estimate1}} 
\label{se:6.2}
For this, we keep a pair $(i,j)$ with $i\neq j$ fixed, let $\bbF^{ij}$ be 
the vector bundle defined by Eq.~\eqref{eq:FirstNormalBundle1} 
and set $U:=\bbF_p$\,, $W_i:=D^i_p$\,, $W_j:=D^j_p$ and $U_{ij}:=\bbF^{ij}_p$\,. Then
$V_{ij}:=W_i\oplus U_{ij}$ is the linear space defined by Eq.~\eqref{eqVij}.
We let $K$ denote the isotropy subgroup of $\Iso(N)^0$ at $p$\,, $\rho:K\to T_pM$ be the
corresponding isotropy representation and $\alpha:K\times\bbF\to\bbF$ be the action
described by Eq.~\eqref{eq:alpha}. 
 
Let $K\cong K_1\times\cdots\times K_r$ be the induced product
structure\,, see Eq.~\eqref{eq:IsotropyGroup}\,. Then $K_i\times K_j$ acts
orthogonally on $U_{ij}$ via $\alpha$ (see again Eq.~\eqref{eq:FirstNormalBundle1}
and Eq.~\eqref{eq:alpha}).

\begin{proposition}\label{p:possibilities}
Only one of the following three cases can occur:
\begin{itemize}
\item $U_{ij}=\{0\}$\,.
\item The dimension of $U_{ij}$ is at least $1/2\,\,m_i\,m_j$ and $K_i\times
  K_j$ acts irreducibly on $U_{ij}$\,.
\item There is the splitting $U_{ij}=U'\oplus U''$ into two irreducible 
invariant subspaces of the same dimension $1/2\,m_i\,m_j$\,.
\end{itemize}
\end{proposition}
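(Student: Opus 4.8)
The plan is to realize $U_{ij}$ as a homomorphic image of the external tensor product $W_i\otimes_{\bbR}W_j$ and to read off its possible isomorphism types from the decomposition theory of such products recorded in the appendix. First, by Eq.~\eqref{eq:alpha} the bilinear map $h|_{W_i\times W_j}$ induces a linear map $\tilde h\colon W_i\otimes_{\bbR}W_j\to U_{ij}$; it is surjective, because $U_{ij}=\bbF^{ij}_p$ is by definition the real linear span of the vectors $h(x,y)$ with $(x,y)\in W_i\times W_j=D^i_p\times D^j_p$, and it is $K_i\times K_j$-equivariant, where $K_i\times K_j\subset K$ acts on $W_i\otimes_{\bbR}W_j$ as the external tensor product of the two isotropy actions (see Def.~\ref{de:ExteriorTensorprodukt2} and Eq.~\eqref{eq:alpha}). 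Moreover, by Eq.~\eqref{eq:alpha} again, each remaining factor $K_l$ with $l\notin\{i,j\}$ acts trivially on $U_{ij}$, so the $K$-action on $U_{ij}$ is precisely this $K_i\times K_j$-action. Since $K_i\times K_j$ is compact, $W_i\otimes_{\bbR}W_j$ is completely reducible, and hence $U_{ij}$ is isomorphic, as a $K_i\times K_j$-module, to some submodule of $W_i\otimes_{\bbR}W_j$.

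Next I would pin down the $K_i\times K_j$-module structure of $W_i\otimes_{\bbR}W_j$. By Lemma~\ref{le:CanonicalConnection}~(b), $W_i$ is an irreducible $K_i$-module and $W_j$ an irreducible $K_j$-module; and, being the isotropy representations of the irreducible symmetric spaces $M_i$ and $M_j$, they are of real type or of complex type but never of quaternionic type (indeed, each of $M_i$, $M_j$ is either Hermitian symmetric, in which case its isotropy representation is of complex type, or else its isotropy representation is of real type). Hence $\End_{K_i}(W_i)$ and $\End_{K_j}(W_j)$ are each isomorphic to $\bbR$ or to $\bbC$, and therefore $\End_{K_i\times K_j}(W_i\otimes_{\bbR}W_j)\cong\End_{K_i}(W_i)\otimes_{\bbR}\End_{K_j}(W_j)$ is isomorphic to $\bbR$, to $\bbC$, or to $\bbC\otimes_{\bbR}\bbC\cong\bbC\oplus\bbC$. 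In the first two cases this algebra is a (skew) field, so $W_i\otimes_{\bbR}W_j$ is an irreducible $K_i\times K_j$-module of real dimension $m_i\,m_j$; in the third case — which occurs exactly when both $M_i$ and $M_j$ are Hermitian symmetric — a complexification argument carried out in the appendix (cf.\ Lemma~\ref{le:Schur2} and Rem.~\ref{re:Reformulate}) shows that $W_i\otimes_{\bbR}W_j$ is a direct sum of exactly two non-isomorphic irreducible submodules, each of real dimension $\tfrac12\,m_i\,m_j$.

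It remains to combine the two observations. If $W_i\otimes_{\bbR}W_j$ is irreducible, then the submodule to which $U_{ij}$ is isomorphic is $\{0\}$ or all of $W_i\otimes_{\bbR}W_j$; thus $U_{ij}=\{0\}$ or $U_{ij}$ is irreducible of dimension $m_i\,m_j\geq\tfrac12\,m_i\,m_j$, which is the first or the second alternative. If instead $W_i\otimes_{\bbR}W_j=A\oplus B$ with $A\not\cong B$ irreducible and $\dim A=\dim B=\tfrac12\,m_i\,m_j$, then the submodules of $W_i\otimes_{\bbR}W_j$ are exactly $\{0\}$, $A$, $B$ and $A\oplus B$; transporting the isomorphism back, $U_{ij}$ is either $\{0\}$, or irreducible of dimension $\tfrac12\,m_i\,m_j$, or splits as $U_{ij}=U'\oplus U''$ with $U'\cong A$ and $U''\cong B$ irreducible of dimension $\tfrac12\,m_i\,m_j$ — the first, second, or third alternative. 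Since $m_i,m_j\geq 2$, the three alternatives are mutually exclusive, so exactly one of them occurs.

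The only substantial input is the decomposition statement used in the second step — that the external tensor product of two irreducible isotropy representations of irreducible symmetric spaces breaks into at most two pieces, with the factor $\tfrac12$ in the dimension bound. This is forced by the fact that neither commutant $\End_{K_i}(W_i)$, $\End_{K_j}(W_j)$ can be the quaternions, so that $\End_{K_i\times K_j}(W_i\otimes_{\bbR}W_j)$ has real dimension at most $4$; granting this, the remaining case analysis is routine Schur-theoretic bookkeeping, available in the appendix.
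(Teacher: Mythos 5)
Your proposal is correct, and its skeleton is the same as the paper's: you use that $h_p$ induces a surjective, $K_i\times K_j$-equivariant map $W_i\otimes_{\R}W_j\to U_{ij}$ (via Eq.~\eqref{eq:alpha} and Def.~\ref{de:ExteriorTensorprodukt2}) and then read off the possible images by Schur-type bookkeeping; where the paper quotes the ``Lie group version'' of Lemma~\ref{le:Schur2}~(d), you invoke complete reducibility of the compact group $K_i\times K_j$ to realize $U_{ij}$ as a submodule, which is an equivalent step. The genuine difference lies in how you obtain the structure of $W_i\otimes_{\R}W_j$ as a $K_i\times K_j$-module: the paper appeals to Lemma~\ref{le:ExteriorTensorprodukt}, proved in the appendix by complexification and highest-weight theory (Lemma~\ref{le:irreducible}) together with the Hermitian dichotomy of Lemma~\ref{le:Hermitian}, whereas you compute the commutant $\End_{K_i\times K_j}(W_i\otimes_{\R}W_j)\cong\End_{K_i}(W_i)\otimes_{\R}\End_{K_j}(W_j)\in\{\R,\C,\C\oplus\C\}$; this is a more elementary route, since a division-algebra commutant plus complete reducibility gives irreducibility, and a commutative four-dimensional commutant forces exactly two non-isomorphic irreducible summands. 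Two caveats. First, your input that the isotropy representation of an irreducible symmetric space is never of quaternionic type is asserted rather than argued; inside the paper it follows in one line from Lemma~\ref{le:Hermitian}~(d): in the Hermitian case $\rho(\frakc)=\R\,J$ lies in the center of the commutant, while $\bbH$ has center $\R$, so the commutant is exactly $\C$ (in the non-Hermitian case it is $\R$ by Lemma~\ref{le:Hermitian}~(b) combined with Lemma~\ref{le:Komplexifikation}~(b)). Second, the equality of the dimensions of the two summands when both $M_i$ and $M_j$ are Hermitian does not follow from the commutant computation alone (two non-isomorphic irreducibles, each with commutant $\C$, need not a priori have equal dimension); you rightly defer to the appendix for this, but the correct reference is Lemma~\ref{le:ExteriorTensorprodukt}~(c) — the eigenspace decomposition of $J_1\otimes J_2$ together with the explicit linear swap of $V_+$ and $V_-$ — rather than Lemma~\ref{le:Schur2} and Remark~\ref{re:Reformulate}. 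With these two points repaired, your argument is complete and matches the statement of Proposition~\ref{p:possibilities}.
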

\begin{proof}
According to Lemma~\ref{le:ExteriorTensorprodukt}, the induced action of
$K_i\times K_j$ on $W_i\otimes W_j$ is either irreducible or $W_i\otimes W_j$ 
is the direct sum of two irreducible invariant subspaces of the same dimension
 $1/2\,m_i\,m_j$\,. 
Furthermore, $h_p:W_i\otimes W_j\to U_{ij}$ is a surjective homomorphism. Now the ``Lie group version'' of
Lemma~\ref{le:Schur2}~(d) (cf. Remark~\ref{re:Reformulate}) implies that either $U_{ij}$ is also
the direct sum of two irreducible invariant subspaces of the
same dimension $1/2\,m_i\,m_j$ or $U_{ij}$ is irreducible
and of dimension at least $1/2\,\,m_i\,m_j$\,. 
 \end{proof}

We recall that $\bbF^{ij}_q$ is a curvature invariant subspace of $T_{f(q)}M$
for each $q\in M$ (Cor.~\ref{co:CurvatureInvariant}). Hence, on the one hand,
as $q$ varies over $M$\,,
 \begin{equation}\label{eq:Tij}
T:\bbF^{ij}_q\times\bbF^{ij}_q\times\bbF^{ij}_q\to\bbF^{ij}_q\;,(\xi_1,\xi_2,\xi_3)\mapsto R^N(\xi_1,\xi_2)\,\xi_3
\end{equation}
defines a section of the induced vector bundle $\rmL^3(\bbF^{ij};\bbF^{ij})$\,. Let
the latter be equipped with the connection which is canonically induced by $\nabla^\bot$\,.

\begin{lemma}\label{le:T}
$T$ is a parallel section.
\end{lemma}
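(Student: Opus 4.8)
The plan is to verify the defining property of a parallel section of $\rmL^3(\bbF^{ij};\bbF^{ij})$ directly. First I would fix a curve $c:\R\to M$ together with three sections $\xi_1,\xi_2,\xi_3$ of $\bbF^{ij}$ along $c$ that are parallel for $\nabla^\bot$, and show that $t\mapsto R^N(\xi_1(t),\xi_2(t))\,\xi_3(t)$ is again a $\nabla^\bot$-parallel section of $\bbF^{ij}$ along $c$. That this map is a section of $\bbF^{ij}$ at all is exactly Cor.~\ref{co:CurvatureInvariant}. Since $\bbF^{ij}$ is a $\nabla^\bot$-parallel subbundle of $\bot f$, the Weingarten equation~\eqref{eq:GW} shows that for any section $\xi$ of $\bbF^{ij}$ the two derivatives $\nabla^N_{\dot c}\xi$ and $\nabla^\bot_{\dot c}\xi$ differ only by $-S_\xi(\dot c)\in TM$; hence $\nabla^\bot_{\dot c}\big(R^N(\xi_1,\xi_2)\xi_3\big)$ is precisely the orthogonal projection onto $\bbF^{ij}_{c(t)}$ of $\nabla^N_{\dot c}\big(R^N(\xi_1,\xi_2)\xi_3\big)$. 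So it suffices to show that this projection vanishes.

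To compute $\nabla^N_{\dot c}\big(R^N(\xi_1,\xi_2)\xi_3\big)$ I would use that $N$ is a symmetric space, so $\nabla^N R^N=0$; the product rule then gives
\begin{equation*}
\nabla^N_{\dot c}\big(R^N(\xi_1,\xi_2)\xi_3\big)=R^N\big(\nabla^N_{\dot c}\xi_1,\xi_2\big)\xi_3+R^N\big(\xi_1,\nabla^N_{\dot c}\xi_2\big)\xi_3+R^N(\xi_1,\xi_2)\,\nabla^N_{\dot c}\xi_3\;.
\end{equation*}
By the Weingarten equation and the $\nabla^\bot$-parallelism of the $\xi_k$, each $\nabla^N_{\dot c}\xi_k=-S_{\xi_k}(\dot c)$ is a section of $TM$ along $c$. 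Thus each of the three summands has the form $R^N(\eta_1,\eta_2)\eta_3$ with exactly one of $\eta_1,\eta_2,\eta_3$ lying in $TM$ and the other two in $\bbF^{ij}\subset\bbF$.

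It then remains to check that every such expression is orthogonal to $\bbF^{ij}$ — in fact to all of $\bbF$. I would fix a point $q$ on the curve and some $\zeta\in\bbF^{ij}_q$, and use that $\osc_qf=T_qM\oplus\bbF_q$ is an orthogonal splitting and that, by~\eqref{eq:CurvatureInvariance} and~\eqref{eq:Fundamental_2} (from~\cite{J1}), $R^N(\mu,\nu)$ maps $\osc_qf$ into itself and preserves this splitting for all $\mu,\nu\in\bbF_q$. For the third summand this already gives $R^N(\xi_1,\xi_2)\,S_{\xi_3}(\dot c)\in T_qM$, which is orthogonal to $\zeta$. For the first two I would apply the pair symmetry $\g{R^N(A,B)C}{D}=\g{R^N(C,D)A}{B}$ so as to bring the two $\bbF$-arguments, together with $\zeta$, into the first slot:
\begin{equation*}
\g{R^N\big(S_{\xi_1}(\dot c),\xi_2\big)\xi_3}{\zeta}=\g{R^N(\xi_3,\zeta)\,S_{\xi_1}(\dot c)}{\xi_2}=0\;,
\end{equation*}
since $R^N(\xi_3,\zeta)$ maps $S_{\xi_1}(\dot c)\in T_qM$ back into $T_qM$, which is orthogonal to $\xi_2\in\bbF_q$; likewise
\begin{equation*}
\g{R^N\big(\xi_1,S_{\xi_2}(\dot c)\big)\xi_3}{\zeta}=\g{R^N(\xi_3,\zeta)\,\xi_1}{S_{\xi_2}(\dot c)}=0\;,
\end{equation*}
since $R^N(\xi_3,\zeta)$ maps $\xi_1\in\bbF_q$ back into $\bbF_q$, which is orthogonal to $S_{\xi_2}(\dot c)\in T_qM$. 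Therefore the $\bbF^{ij}$-projection of $\nabla^N_{\dot c}\big(R^N(\xi_1,\xi_2)\xi_3\big)$ vanishes, so by the first paragraph $R^N(\xi_1,\xi_2)\xi_3$ is $\nabla^\bot$-parallel along $c$, i.e.\ $T$ is a parallel section.

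I do not expect a genuine obstacle here; the argument is essentially bookkeeping. The only point to keep in mind is that the entire discrepancy between $\nabla^N$- and $\nabla^\bot$-parallelism of $R^N(\xi_1,\xi_2)\xi_3$ is carried by the shape-operator terms from the Weingarten equation, which always take values in $TM$ and are hence orthogonal to $\bbF^{ij}$. It is also worth noting that only the coarse curvature-invariance statements~\eqref{eq:CurvatureInvariance}, \eqref{eq:Fundamental_2} and Cor.~\ref{co:CurvatureInvariant} enter, not the sharper Thm.~\ref{th:Fundamental}.
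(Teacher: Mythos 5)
Your argument is correct, but it follows a different route from the paper: the paper's proof of Lemma~\ref{le:T} is essentially a two-line citation, namely that for any curve and ($\nabla^\bot$-)parallel sections $\xi_1,\dots,\xi_4$ of $\bbF^{ij}$ along it the scalar $\g{R^N(\xi_1,\xi_2)\,\xi_3}{\xi_4}$ is constant by~\cite[Prop.~8]{J1}, from which parallelism of $T$ follows since parallel sections span each fiber and parallel displacement is isometric. What you do instead is, in effect, reprove that invariance statement from scratch: you differentiate $R^N(\xi_1,\xi_2)\,\xi_3$ along the curve using $\nabla^N R^N=0$, convert the discrepancy between $\nabla^N$ and $\nabla^\bot$ into shape-operator terms via the Weingarten equation and the $\nabla^\bot$-parallelism of the subbundle $\bbF^{ij}$, and then kill the $\bbF^{ij}$-component of each resulting summand using the pair symmetry of $R^N$ together with Eq.~\eqref{eq:CurvatureInvariance} and~\eqref{eq:Fundamental_2}; all of these steps check out. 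The paper's version is shorter but opaque (and, as stated there, the word ``parallel'' is even suppressed for the $\xi_k$), whereas yours is self-contained and makes visible exactly which inputs are used. One small caveat about your closing remark: Cor.~\ref{co:CurvatureInvariant}, which you need just so that $T$ is a section of $\rmL^3(\bbF^{ij};\bbF^{ij})$ at all, is itself deduced in the paper from Thm.~\ref{th:Fundamental}, so the sharper theorem does enter indirectly through the very definition of $T$ — but that does not affect the validity of your proof.
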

\begin{proof}
For any differentiable curve $c:\R\to M$ and all sections 
$\xi_1,\ldots,\xi_4$ of $\bbF^{ij}$ along $c$\,, the function
$f(t):=\g{R^N(\xi_1(t),\xi_2(t))\,\xi_3(t)}{\xi_4(t)}$ is constant according
to~\cite[Prop.~8]{J1}. The result follows.
\end{proof}

On the other hand, using arguments given already in Sec.~\ref{se:2.6},
we now see that 
\begin{equation}
\label{eq:Mij}
\tilde M:=\exp^N(U_{ij})\subset N
\end{equation} 
is also a totally geodesically embedded symmetric space and that $T_p$ is the
curvature tensor of $\tilde M$ at $p$\,. 
Then, analogous to Eq.~\eqref{eq:hol(barMi)}, the holonomy Lie algebra of $\tilde M$
is the subalgebra of $\so(U_{ij}$ which is given by
\begin{equation}\label{eq:HolonomyOfFij}
\hol(\tilde M):=\Spann{R^N(\xi_1,\xi_2)|U_{ij}}{\xi_1,\xi_2\in U_{ij}}\;.
\end{equation}

Let $U_{ij}=U_0\oplus\cdots \oplus U_k$ be an orthogonal
decomposition such that $U_0$ is the largest vector subspace of
$U_{ij}$ on which $\hol(\tilde M)$ acts trivially and that $U_l$ is an irreducible $\hol(\tilde M)$-module for $l=1,\ldots,k$\,.
By virtue of the de\,Rham Decomposition Theorem, there exists a Euclidian
space $\tilde M_0$ and irreducible symmetric spaces $\tilde M_l$ ($l=1,\ldots,k$) such that the universal covering
space of $\tilde M$ is isometric to the Riemannian product of $\tilde M_0\times \cdots\times
\tilde M_k$ and that $U_l\cong T_p\tilde M_l$ for $l=0,\ldots,k$\,. Moreover, this de\,Rham decomposition of $\tilde M$ (and
hence the linear spaces $U_l$ are also unique (up to isometry, respectively,
and a permutation of $\{\tilde M_1,\ldots,\tilde M_k\}$).

\begin{corollary}\label{co:four_cases}
\begin{enumerate}
\item $U_l$ is invariant under the action of $K_i\times K_j$ on $U_{ij}$ for $l=0,\ldots,k$\,.
\item There are no more than the following five possibilities:
\begin{itemize} 
\item $U_{ij}$ is trivial.
\item $\tilde M$ is an irreducible symmetric space of dimension at least $1/2\,m_i\,m_j$\,.
\item $\tilde M$ is a flat of $N$\,.
\item
 The universal covering space of $\tilde M$ 
is isometric to the Riemannian product $\tilde M_0\times \tilde M_1$ where
$\tilde M_0$ is Euclidian and $\tilde M_1$ is an irreducible symmetric space 
such that $\dim(\tilde M_0)=\dim(\tilde M_1)=1/2\,m_i\,m_j$\,.
\item
 The universal covering space of $\tilde M$ is isometric to the Riemannian product
 $\tilde M_1\times \tilde M_2$ of two irreducible symmetric spaces satisfying $\dim(\tilde M_1)=\dim(\tilde M_2)=1/2\,m_i\,m_j$\,.
\end{itemize}
\end{enumerate}
\end{corollary}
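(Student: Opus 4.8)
The plan is to combine the structure theory of the totally geodesic submanifold $\tilde M=\exp^N(U_{ij})$ (its de\,Rham decomposition, with pieces $U_0,\dots,U_k$) with the equivariance of the whole construction under the group $K_i\times K_j$, and then intersect the resulting list of possibilities with the three cases already furnished by Prop.~\ref{p:possibilities}.

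For part~(a): the idea is that the $U_l$ are \emph{intrinsically} defined from the curvature tensor $T_p$ of $\tilde M$ (they are the flat part and the irreducible factors of the de\,Rham decomposition), whereas the action of $K_i\times K_j$ on $U_{ij}$ is by \emph{isometries} of $\tilde M$ fixing $p$. More precisely, for each $g\in K_i\times K_j$ the orthogonal map $\alpha(g,\cdot)|U_{ij}$ intertwines the curvature tensor $T_p$ with itself: this follows because $T$ is a $\nabla^\bot$-parallel section of $\rmL^3(\bbF^{ij};\bbF^{ij})$ (Lemma~\ref{le:T}) and $\alpha$ acts through $\nabla^\bot$-parallel vector bundle isomorphisms (Prop.~\ref{p:HomogeneousVectorbundle}), so $\alpha(g,\cdot)$ commutes with $T_p$ for $g$ in the isotropy group. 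An orthogonal transformation of $U_{ij}$ that preserves the curvature tensor necessarily preserves $\hol(\tilde M)\subset\so(U_{ij})$ under conjugation, hence permutes its isotypic blocks; since the flat part $U_0$ is the fixed subspace and each $U_l$ ($l\geq1$) an irreducible submodule, and since $K_i\times K_j$ is connected, each $U_l$ is in fact invariant (not merely permuted). Alternatively, one can argue directly that the de\,Rham decomposition of $\tilde M$ is unique and that the isometry group of $N$ fixing $p$ and preserving $\tilde M$ must respect it up to permutation of isometric factors, again collapsing to honest invariance by connectedness.

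For part~(b): here the plan is a bookkeeping argument. By Prop.~\ref{p:possibilities}, as a $(K_i\times K_j)$-module $U_{ij}$ is either $\{0\}$, or irreducible of dimension $\geq\frac12 m_im_j$, or a sum $U'\oplus U''$ of two irreducible submodules each of dimension exactly $\frac12 m_im_j$. In the first case we land in the bullet ``$U_{ij}$ trivial''. In the irreducible case, by part~(a) the de\,Rham pieces $U_0,\dots,U_k$ are $(K_i\times K_j)$-invariant, so by irreducibility there is exactly one of them: either $U_{ij}=U_0$, i.e.\ $\hol(\tilde M)$ acts trivially and $\tilde M$ is a flat of $N$; or $U_{ij}=U_1$ is $\hol(\tilde M)$-irreducible, so its universal cover is an irreducible symmetric space, of dimension $=\dim(U_{ij})\geq\frac12 m_im_j$. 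In the reducible case $U_{ij}=U'\oplus U''$, each of the invariant pieces $U_l$ is contained in $U'$ or in $U''$ (because it is $(K_i\times K_j)$-irreducible, hence cannot straddle the decomposition unless $U'\cong U''$, in which case one still chooses a decomposition adapted to it); since $\dim U'=\dim U''=\frac12 m_im_j$, there are at most two de\,Rham pieces of positive $\hol$-action, each of dimension $\frac12 m_im_j$, and possibly a zero flat part, giving either the ``$\tilde M_0\times\tilde M_1$'' bullet (one flat factor, one irreducible, both of dimension $\frac12 m_im_j$) or the ``$\tilde M_1\times\tilde M_2$'' bullet (two irreducible factors of that dimension). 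These exhaust the five listed possibilities.

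The main obstacle I anticipate is part~(a): making precise that the \emph{extrinsic} symmetry group $K_i\times K_j$ acting on $U_{ij}$ genuinely preserves the \emph{intrinsic} de\,Rham splitting of $\tilde M$, rather than merely permuting isometric factors. The clean route is to exploit the parallelism of $T$ (Lemma~\ref{le:T}) together with $\alpha$ acting by $\nabla^\bot$-parallel isomorphisms, yielding that $\alpha(g,\cdot)$ commutes with $T_p$ for every $g\in K_i\times K_j$; invariance of each $U_l$ then drops out from connectedness of $K_i\times K_j$ and the fact that the flat part is characterized as the $\hol(\tilde M)$-fixed subspace while the remaining $U_l$ are the isotypic components. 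Everything after that is the module-theoretic counting above, which is routine given Prop.~\ref{p:possibilities} and the de\,Rham Decomposition Theorem.
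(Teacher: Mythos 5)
Your overall strategy is the paper's: show that the isotropy action preserves the curvature tensor $T_p$ of $\tilde M$, deduce from uniqueness of the de\,Rham decomposition plus a connectedness/continuity argument that each $U_l$ is invariant, and then obtain (b) by intersecting (a) with the trichotomy of Prop.~\ref{p:possibilities}. However, the justification you offer for the central step of (a) has a real gap: Lemma~\ref{le:T} (parallelism of $T$) together with Prop.~\ref{p:HomogeneousVectorbundle} ($\alpha$ acts through $\nabla^\bot$-parallel vector bundle isomorphisms) does \emph{not} by itself imply that $\alpha(g,\cdot)$ commutes with $T_p$ for $g$ in the isotropy group. A parallel vector bundle automorphism whose base map fixes $p$ carries the parallel section $T$ of $\rmL^3(\bbF^{ij};\bbF^{ij})$ to another parallel section, but nothing in those two facts forces the two sections to agree at $p$ --- that agreement is exactly what has to be proved (a flat bundle on which a group acts by parallel automorphisms rotating the fibre over a fixed point of the base shows the inference pattern fails in general). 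The missing ingredient is Lemma~\ref{le:CanonicalConnection}~(c): the $\alpha$-action of the isotropy group on $\bbF_p$ coincides with the $\nabla^\bot$-holonomy group of $\bbF$ at $p$, so each $\alpha(g,\cdot)|\bbF_p$ is a parallel displacement along a loop and therefore fixes the value at $p$ of the parallel section $T$, giving $T_p(g\,\xi_1,g\,\xi_2)\,g\,\xi_3=g\,T_p(\xi_1,\xi_2)\,\xi_3$. This is precisely how the paper argues; once that lemma is inserted, your connectedness argument for $g(U_l)=U_l$ is the paper's.

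For (b) your bookkeeping matches the paper (which cites Prop.~\ref{p:possibilities} and the uniqueness assertion of Lemma~\ref{le:Schur2}), but one sub-claim is off: the pieces $U_l$ are $\hol(\tilde M)$-irreducible and, by (a), $(K_i\times K_j)$-invariant, yet not known to be $(K_i\times K_j)$-irreducible, so the assertion that each $U_l$ ``is contained in $U'$ or $U''$'' is unjustified. It is also unnecessary: any nonzero $(K_i\times K_j)$-invariant subspace of $U'\oplus U''$ contains an irreducible submodule isomorphic to $U'$ or $U''$ and hence has dimension at least $\tfrac12\,m_i\,m_j$; consequently at most two of the $U_l$ are nonzero, and if two are nonzero each has dimension exactly $\tfrac12\,m_i\,m_j$, which lands in the fourth or fifth bullet, while the sub-cases with a single nonzero piece fall under the second or third bullet. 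With these repairs your plan reproduces the paper's proof.
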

\begin{proof}
For~(a): Combining the two facts that the $\nabla^\bot$-holonomy group of
$\bbF$ is given by Eq.~\eqref{eq:54321} (see Lemma~\ref{le:CanonicalConnection}~(c)) and that $T$ is a $\nabla^\bot$-parallel
section (see Lemma~\ref{le:T}), we get that $T(g\,\xi_1,g\,\xi_2)\,g\,\xi_3=g\,T(\xi_1,\xi_2)\,\xi_3$ for all
$\xi_1,\xi_2,\xi_3\in U_{ij}$ and $g\in K$\,. Hence $g(U_0)$ is also a
trivial $\hol(\tilde M)$-module and $\hol(\tilde M)$ acts irreducibly on $g(U_l)$ for
each $g\in K$ and $l=1,\ldots,k$\,, too, in accordance with Eq.~\eqref{eq:HolonomyOfFij}.
Also using the uniqueness of the de\,Rham decomposition, a continuity argument now shows that $g(U_l)=U_l$
for each $g\in K$ and $l=0,\ldots,k$\,. Our result follows.

(b) is now an immediate consequence of Prop.~\ref{p:possibilities}
and the uniqueness assertion of Lemma~\ref{le:Schur2}~(c) (again, we use its ``Lie group version''.
 \end{proof} 

In the following, we let $\frakh_0$ be the Lie algebra described in
Thm.~\ref{th:hol} and $\frakg$ be any subalgebra of $\frakh_0$\,. 
Then Eq.~\eqref{eq:unit} implies that there are induced representations,
\begin{align}\label{eq:rho_1}
&\rho_j:\frakg\to\so(W_j),\;A\mapsto
A|W_j:W_j\to W_j\;,\\
\label{eq:rho_2}
&\rho_{ij}:\frakg\to\so(U_{ij}),\;A\mapsto
A|U_{ij}:U_{ij}\to U_{ij}
\end{align}
for each $j\neq i$\,. Hence the linear space $\Hom_\frakg(W_j,U_{ij})$ is
defined in accordance with Def.~\ref{de:Hom}.

\begin{lemma}\label{le:Hom2}
The natural isomorphism  $\so(V_{ij})_-\to\rmL(W_j,U_{ij})$ 
provided by Lemma~\ref{le:splitting} induces the inclusion
\begin{align}\label{eq:Hom3}
&\frakc(\frakh^i|V_{ij})\cap\so(V_{ij})_-\hookrightarrow \Hom_\frakg(W_j,U_{ij})\;.
\end{align}
\end{lemma}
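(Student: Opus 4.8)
The plan is to make the natural isomorphism of Lemma~\ref{le:splitting} explicit and to translate the centralizer condition for $A$ into a $\frakg$-intertwining condition on the associated linear map $A|W_j$.

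First I would note that $\frakg\subset\frakh_0\subset\frakh^i$, so that any $A$ in the centralizer $\frakc(\frakh^i|V_{ij})$ commutes in particular with $X|V_{ij}$ for every $X\in\frakg$. Moreover $\frakg\subset\frakh_0\subset\so(V)_0$, so Eq.~\eqref{eq:unit} shows that each $X\in\frakg$ leaves both summands $W_j$ and $U_{ij}$ of $V_{ij}$ invariant; hence, with respect to the decomposition $V_{ij}=W_j\oplus U_{ij}$, the endomorphism $X|V_{ij}$ is block-diagonal, equal to $\rho_j(X)\oplus\rho_{ij}(X)$ with $\rho_j,\rho_{ij}$ as in Eq.~\eqref{eq:rho_1}--\eqref{eq:rho_2}.

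Now take $A\in\frakc(\frakh^i|V_{ij})\cap\so(V_{ij})_-$ and let $\ell:=A|W_j\in\rmL(W_j,U_{ij})$ be its image under the isomorphism of Lemma~\ref{le:splitting}. By that lemma, relative to $V_{ij}=W_j\oplus U_{ij}$ the map $A$ is block-anti-diagonal, with lower-left block $\ell$ and upper-right block $-\ell^{\ast}$ (here $\ell^{\ast}:U_{ij}\to W_j$ denotes the adjoint of $\ell$). Evaluating the vanishing bracket $[A,\rho_j(X)\oplus\rho_{ij}(X)]=0$ blockwise, the lower-left block gives $\ell\circ\rho_j(X)=\rho_{ij}(X)\circ\ell$ for all $X\in\frakg$; that is, $\ell$ is $\frakg$-equivariant, so $\ell\in\Hom_{\frakg}(W_j,U_{ij})$ in the sense of Def.~\ref{de:Hom}. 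Since $A\mapsto\ell$ is just the restriction of the injective map of Lemma~\ref{le:splitting}, this produces the claimed inclusion~\eqref{eq:Hom3}.

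I do not anticipate a serious obstacle: the argument is a routine unwinding of the definitions. The only points that need care are the justification that $\frakg$ acts block-diagonally on $V_{ij}$ — which rests on $\frakg\subset\frakh_0$ together with Eq.~\eqref{eq:unit} — and the adjoint/sign bookkeeping when writing an element of $\so(V_{ij})_-$ in block form via Lemma~\ref{le:splitting}.
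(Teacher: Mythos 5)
Your proof is correct and takes essentially the same route as the paper's: there the author merely notes that $\frakg\subset\frakh_0\subset\frakh^i$ forces every element of $\frakc(\frakh^i|V_{ij})$ to commute with $\frakg|V_{ij}$ and calls the intertwining property of $A|W_j$ ``straightforward'', which is precisely the blockwise computation (block-diagonal action of $\frakg$ via Eq.~\eqref{eq:unit}, block-anti-diagonal form of $A\in\so(V_{ij})_-$ via Lemma~\ref{le:splitting}) that you carry out. Your reading $V_{ij}=W_j\oplus U_{ij}$ is indeed the intended one (Eq.~\eqref{eqVij} misprints $W_i$ for $W_j$), since only with it are Lemma~\ref{le:splitting}, the invariance $A(V_{ij})\subset V_{ij}$ for $A\in\frakh^i$, and Eq.~\eqref{eq:Blockform2} applicable.
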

\begin{proof}
Since $\frakg\subset\frakh^i$ is a subalgebra\,, we have $[A,\frakg]=\{0\}$
for each $A\in\frakc(\frakh^i)$\,. Now  it is
straightforward to show that
$A|W_j:W_j\to U_{ij}$ 
belongs to $\Hom_\frakg(W_j,U_{ij})$  
for each $A\in\frakc(\frakh^i)\cap\so(V)_i$ \,.
 \end{proof}

We also recall the following result (cf.~\cite[Prop.~7]{J2}, ):

\begin{lemma}\label{le:curved_flat}
Let a linear subspace $V'\subset T_pN$ be given. Then the following assertions are equivalent:
\begin{enumerate}
\item $V'$ is a curvature isotropic subspace of $T_pN$ (see Def.~\ref{de:curvature_isotropic})\,.
\item $\exp^N(V')$ is a flat of $N$\,.
\item The sectional curvature of $N$ vanishes on every 2-plane of $V'$\,, i.e. 
$\g{R^N(v,w)\,w}{v}=0$ for all $v,w\in V'$\,.
\end{enumerate}
\end{lemma}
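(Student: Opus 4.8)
The plan is to prove the equivalence $(a)\Leftrightarrow(b)\Leftrightarrow(c)$ characterizing when $\exp^N(V')$ is a flat, and to do this I would establish the cycle $(a)\Rightarrow(b)\Rightarrow(c)\Rightarrow(a)$. First I would recall the fundamental fact, due to E.~Cartan, that a linear subspace $W\subset T_pN$ is the tangent space of a totally geodesic submanifold of $N$ if and only if $W$ is curvature invariant, i.e.\ $R^N(W\times W\times W)\subset W$; in that case the totally geodesic submanifold is $\exp^N(W)$, its curvature tensor at $p$ is the restriction of $R^N$, and $\exp^N(W)$ is moreover (the image of) a symmetric space. This is exactly the mechanism already invoked around Eq.~\eqref{eq:tg}. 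So for $(a)\Rightarrow(b)$: if $V'$ is curvature isotropic, it is in particular curvature invariant (since $R^N(v,w)w'$ lies in the span of the reflections $\sigma^W$-type decompositions \dots), hence $\exp^N(V')$ is a totally geodesic submanifold whose intrinsic curvature tensor vanishes identically, so it is intrinsically flat, i.e.\ a flat of $N$ in the sense of Def.~\ref{de:curvature_isotropic}.

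Next, $(b)\Rightarrow(c)$: if $\exp^N(V')$ is a flat of $N$, then it is a totally geodesic submanifold on which the induced metric is flat, so its Riemann curvature tensor vanishes. Because it is totally geodesic, the Gau\ss{} equation reduces to the statement that the ambient sectional curvature agrees with the intrinsic one on tangent $2$-planes; hence $\g{R^N(v,w)w}{v}=0$ for all $v,w\in V'$, which is precisely $(c)$. For $(c)\Rightarrow(a)$ I would argue that the condition $\g{R^N(v,w)w}{v}=0$ for all $v,w\in V'$ forces the full restricted curvature operator $R^N(v,w)|V'$ to vanish. The key point here is the algebraic Bianchi identity together with the symmetries of a Riemannian curvature tensor: polarizing $\g{R^N(v,w)w}{v}=0$ in $v$ and $w$ and using the first Bianchi identity, one obtains $\g{R^N(v,w)w'}{v'}=0$ for all $v,v',w,w'\in V'$, and since the curvature tensor of a symmetric space takes values in $\so(\frakp)$ (so that $R^N(v,w)$ maps $V'$ into $T_pN$), one concludes $R^N(v,w)(V')=0$. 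Actually the sharper statement needed is that $R^N(v,w)z\in V'$ for $v,w,z\in V'$; but vanishing of the $V'$-component follows from the polarized identity, and vanishing of the component orthogonal to $V'$ follows by the standard symmetry $\g{R^N(v,w)z}{u}=\g{R^N(z,u)v}{w}$ applied with $u\in V'^{\bot}$ together with the already-established vanishing when all four entries lie in $V'$ — wait, that is not quite it; rather one uses that $\g{R^N(v,w)z}{u}=-\g{R^N(v,w)u}{z}$, no. The clean route is: $(c)\Rightarrow(a)$ is the purely algebraic lemma that a curvature-like tensor whose associated sectional curvature vanishes on a subspace restricts to zero on that subspace, which is standard.

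I would then close the cycle by invoking the already-proved implications; in fact, the cleanest exposition is $(a)\Rightarrow(c)$ trivially (polarization is not even needed in that direction — curvature isotropy gives $R^N(v,w)|V'=0$ hence all sectional curvatures vanish), $(c)\Rightarrow(a)$ by the algebraic lemma just described, $(a)\Rightarrow(b)$ by Cartan's theorem as above, and $(b)\Rightarrow(a)$ because a totally geodesic flat submanifold has vanishing curvature, and by the Gau\ss{} equation (no second fundamental form) the ambient curvature operator restricted to its tangent space must vanish, which is curvature isotropy. The main obstacle — really the only nontrivial point — is the implication $(c)\Rightarrow(a)$, i.e.\ passing from vanishing sectional curvatures on all $2$-planes of $V'$ to the vanishing of the full curvature operator $R^N(\cdot,\cdot)|V'$; this is where one must carefully exploit the curvature symmetries (pair symmetry, skew-symmetry in each pair, and the first Bianchi identity) via polarization. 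Everything else is a direct appeal to E.~Cartan's characterization of totally geodesic submanifolds of symmetric spaces and to the Gau\ss{} equation for totally geodesic submanifolds, both of which are standard and already used elsewhere in the paper (cf.\ the discussion preceding Eq.~\eqref{eq:tg}).
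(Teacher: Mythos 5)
Your cycle breaks down at the step you yourself identify as ``the only nontrivial point'', namely $(c)\Rightarrow(a)$. Polarization plus the Bianchi identity does give $\g{R^N(v,w)\,z}{u}=0$ for all $v,w,z,u\in V'$, but that is strictly weaker than what is needed: curvature isotropy (the notion is taken from [J2], where Prop.~7 is exactly this lemma --- the paper does not reprove it but only cites it) means that $R^N(v,w)$ vanishes as an endomorphism of the \emph{whole} tangent space $T_pN$, and this stronger conclusion is what the paper actually uses later (in Case~3 of Section~6.2 one needs $\g{R^N(x,y)\,\xi}{\eta}=0$ with $x,y$ \emph{outside} $V'$). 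Vanishing of the restricted $4$-tensor does not imply this, nor does it imply that $V'$ is curvature invariant, so your $(c)\Rightarrow(a)$ and hence $(c)\Rightarrow(b)$ do not follow from ``a standard algebraic lemma about curvature-like tensors''. Indeed the implication is simply false without the standing hypothesis of Section~6 that $N$ is of compact or non-compact type, a hypothesis your argument never invokes: in $N=S^2\times H^2$ the plane $V'$ spanned by $v=(e_1,f_1)/\sqrt2$, $w=(e_2,f_2)/\sqrt2$ has $\g{R^N(v,w)\,w}{v}=\tfrac14(1-1)=0$, yet $R^N(v,w)\neq 0$, $V'$ is not curvature invariant, and $\exp^N(V')$ is not a flat. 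The same weakness infects your $(b)\Rightarrow(a)$: the Gau{\ss} equation for a totally geodesic flat only kills the curvature components with all four arguments tangent.

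The correct route (the one behind [J2, Prop.~7]) is Lie-theoretic: identify $T_pN\cong\frakp$ in a Cartan decomposition, so that $R^N(v,w)=-\ad([v,w])|\frakp$ and $\g{R^N(v,w)\,w}{v}$ equals, up to a sign depending on the type, the (definite) Killing form evaluated on $[v,w]$. Since $N$ is of compact or non-compact type, vanishing of the sectional curvature on a $2$-plane forces $[v,w]=0$; hence $(c)$ makes $V'$ an abelian subspace of $\frakp$, which gives both $R^N(v,w)=0$ for all $v,w\in V'$ (that is $(a)$, using that the isotropy algebra acts faithfully on $\frakp$) and that $\exp^N(V')$ is a flat (that is $(b)$). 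The easy implications $(a)\Rightarrow(b)$ via Cartan's criterion and $(b)\Rightarrow(c)$ via the Gau{\ss} equation are fine as you state them, but without the definiteness argument the equivalence with $(c)$ cannot be established.
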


\paragraph{Proof of Eq.~\eqref{eq:Estimate1}}
\begin{proof}
According to Lemma~\ref{le:Hom2}, it suffices to show that
$\Hom_\frakg(W_j,U_{ij})=\{0\}$ for some subalgebra $\frakg\subset\frakh_0$\,.
By means Cor.~\ref{co:four_cases}, it suffices to distinguish the
following five cases:

\textbf{Case 1:} $\tilde M$ is trivial.
Here we have $U_{ij}=\{0\}$\,, hence $\so(V_{ij})_-=\{0\}$\,; then our result is obvious.

\textbf{Case 2:} $ \tilde M$ is an irreducible symmetric space of dimension at
least $1/2\,m_i\,m_j$\,. Here we consider the subalgebra of $\frakh_0$ which is given by
$$\frakg:=\Spann{R^N(\xi,\eta)|V}{\xi,\eta\in U_{ij}}\;,$$
see Cor.~\ref{co:CurvatureInvariant} and Thm.~\ref{th:hol}.

First, we note that $\hol(\tilde M)$ acts irreducibly on $U_{ij}$\,, as a consequence of the de\,Rham Decomposition
Theorem. This together with Eq.~\eqref{eq:HolonomyOfFij} implies that the action
of $\frakg$ on $U_{ij}$ is irreducible. Therefore, and since, by
assumption, $1/2\,m_i\,m_j\geq 3/2\,m_j>m_j$\,, Lemma~\ref{le:Schur1}
implies that $\Hom_\frakg(U_{ij},W_j)=\{0\}$ holds. Thus $\Hom_\frakg(W_j,U_{ij})$ is also trivial, according to Lemma~\ref{le:Schur2}~(e).

\textbf{Case 3:} $\tilde M$ is a Euclidian space. Here we consider the subalgebra of $\frakh_0$ which is given by
$$\frakg:=\Spann{R^N(x,y)|V}{x,y\in W_j}\;,$$ see Cor.~\ref{co:slice} and Thm.~\ref{th:hol}.

One the one hand, $U_{ij}$ is even a curvature isotropic subspace of
$T_{f(p)}N$\,, as a consequence of Lemma~\ref{le:curved_flat} in combination
with Eq.~\eqref{eq:Mij}. Therefore, we have $\g{R^N(x,y)\,\xi}{\eta}=\g{R^N(\xi,\eta)\,x}{y}=0$ for all
$x,y\in W_j$ and $\xi,\eta\in U_{ij}$\,,  i.e. $\frakg$ acts trivially on $U_{ij}$\,.
On the other hand, $\frakg$ acts non-trivially and irreducibly on $W_j$\,,
according to Lemma~\ref{le:hol(barMi)}. Using now Lemma~\ref{le:Schur1}, we now conclude that $\Hom_\frakg(W_j,U_{ij})=\{0\}$\,.

\textbf{Case 4:} The universal covering space of $\tilde M$ is isometric to
    the Riemannian product $\tilde M_0\times \tilde M_1$ where $\tilde M_0$ is Euclidian and $\tilde M_1$ 
is an irreducible symmetric space such that $\dim(\tilde M_0)=\dim(\tilde M_1)=1/2\,m_i\,m_j$\,.
Here we set $\frakg:=\frakh_0$\,. Let $\lambda\in\Hom_\frakg(W_j,U_{ij})$ be given. I claim that
$\lambda=0$\,.

For this: Put $U_0:=T_p\tilde M_0$ and $U_1:=T_p\tilde M_1$\,. Then, following the
arguments from Case~2, the linear space
$$\tilde\frakg:=\Spann{R^N(\xi,\eta)|V}{\xi,\eta\in U_1}$$ 
is actually a subalgebra of $\frakg$ and $U_1$ is a $\tilde\frakg$-invariant subspace of
$U_{ij}$ such that $\Hom_{\tilde\frakg}(W_j,U_1)=\{0\}$\,. 
Hence Lemma~\ref{le:Schur2}~(f) implies that $\lambda(W_j)\subset U_0$\,.

Then, repeating the arguments given in Case~3, we see that $\bar\frakg:=\Spann{R^N(x,y)|V}{x,y\in
  W_j}$ is a subalgebra of $\frakh_0$ and $\lambda(W_j)$ is a
$\bar\frakg$-invariant subspace of $U_{ij}$ such that
$\Hom_{\bar\frakg}(W_j,\lambda(W_j))=\{0\}$\,. We thus conclude that $\lambda=0$ and
therefore $\Hom_\frakg(W_j,U_{ij})=\{0\}$\,.

\textbf{Case 5:} The universal covering space of $\tilde M$ is isometric
    to the Riemannian product $\tilde M_1\times \tilde M_2$ of two
    irreducible symmetric spaces satisfying $\dim(\tilde M_1)=\dim(\tilde M_2)=1/2\,m_i\,m_j$\,.
Again we put $\frakg:=\frakh_0$\,. Let $\lambda\in\Hom_\frakg(W_j,U_{ij})$ be given. I claim that
$\lambda=0$\,.

For this: Put $U_1:=T_p\tilde M_1$ and $U_2:=T_p\tilde M_2$\,. 
Then, repeating an argument from Case~4, we conclude that $\lambda(W_j)\subset U_2$\,.
Vice versa, we have $\lambda(W_j)\subset U_1$\,; therefore, $\lambda=0$\,. 
\end{proof}
\subsection{Proof of Equation~\eqref{eq:Estimate2}}
\label{se:6.3}
For this, we keep $i\in\{1,\ldots,r\}$ fixed, put $f_i:=f|L_i(p):L_i(p)\to N$
and let $\osc f|L_i(p)$ and $f_i^*TN$ denote
the corresponding  pullback bundles; then $\osc f|L_i(p)$ and $f_i^*TN$
both are vector bundles over $L_i(p)$\,. Moreover, $\osc f|L_i(p)\subset
f_i^*TN$ is a parallel subbundle; therefore, we can repeat the construction
from Section~\ref{se:2.5} to obtain the corresponding Holonomy group
$\Hol(\osc f|L_i(p))$ with respect to the connection induced by $\nabla^N$\,.

\begin{definition}
Let $\tilde\frakh^i$ denote the Lie algebra of $\Hol(\osc f|L_i(p))$\,.
\end{definition} 
Furthermore, we put $V:=\osc_pf$ and let $\frakh$ be the subalgebra of
$\so(V)$ introduced in Def.~\ref{de:HolonomyLieAlgebra}. 
Clearly, $\tilde\frakh^i$ is a subalgebra of $\frakh$\,, hence $\tilde\frakh^i\subset\frakh\subset\so(V)$ is a sequence of
subalgebras.

\begin{proposition}\label{p:666}
$\tilde\frakh^i$ is a graded subalgebra of $\frakh$\,, i.e.
\begin{equation}\label{eq:99}
\tilde\frakh^i=\bigoplus_{\delta\in \scrA}\tilde\frakh^i\cap\so(V)_{\delta}\;.\end{equation}
Hence $\tilde\frakh^i\subset\frakh\subset\so(V)$ is actually a sequence of
$\scrA$-graded subalgebras.
\end{proposition}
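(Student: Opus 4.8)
The plan is to re-run the argument used in the proof of Theorem~\ref{th:hol}~(a), now within the slice $L_i(p)$, by restricting the $\nabla^N$-parallel vector bundle involutions $\Sigma^j_p$ ($j=1,\ldots,r$) of Theorem~\ref{th:Symmetries} to $\osc f|L_i(p)$. Recall that the $\scrA$-gradation of $\so(V)$ is, by construction, the simultaneous eigenspace decomposition of the commuting involutions $\Ad(\sigma^j)$, where $\sigma^j:=\Sigma^j_p|V$; hence a linear subspace of $\so(V)$ is $\scrA$-graded if and only if it is invariant under $\Ad(\sigma^j)$ for every $j=1,\ldots,r$. Therefore it suffices to show
$$\Ad(\sigma^j)(\tilde\frakh^i)=\tilde\frakh^i\qquad\text{for }j=1,\ldots,r,$$
and then Eq.~\eqref{eq:99} follows; the final assertion is then immediate from the already established $\scrA$-gradedness of $\frakh$ (Theorem~\ref{th:hol}~(a)) together with $\tilde\frakh^i\subset\frakh$.

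First I would check that, for every $j=1,\ldots,r$, the base map $\sigma^j_p$ of $\Sigma^j_p$ (see Eq.~\eqref{eq:Basemap}) maps the slice $L_i(p)$ onto itself and fixes $p$. Indeed, if $j\neq i$ then $\sigma^j_p$ is the identity on the $i$-th factor and fixes $p_j$ in the $j$-th factor, so $\sigma^j_p$ restricts to the identity on $L_i(p)=\{p_1\}\times\cdots\times M_i\times\cdots\times\{p_r\}$; and if $j=i$ then $\sigma^i_p$ restricts to the geodesic symmetry $\sigma^{M_i}_{p_i}$ of $M_i\cong L_i(p)$, which again has $p_i$ as a fixed point. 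In both cases $\sigma^j_p(L_i(p))=L_i(p)$ and $\sigma^j_p(p)=p$. Consequently, $\Sigma^j_p$ restricts to a $\nabla^N$-parallel vector bundle involution of $\osc f|L_i(p)$ along $\sigma^j_p|L_i(p)$, still with $\Sigma^j_p|V=\sigma^j$. (Here one uses that $\osc f|L_i(p)$, equipped with the restriction of $\nabla^N$, inherits its parallel sections and holonomy from curves lying in $L_i(p)$, so that the restricted $\Sigma^j_p$ is indeed a $\nabla^N$-parallel bundle map.)

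Next, for any loop $c:[0,1]\to L_i(p)$ with $c(0)=p$, the curve $\sigma^j_p\circ c$ is again such a loop, and the parallelity of $\Sigma^j_p$ combined with $\Sigma^j_p|V=\sigma^j$ yields
$$\sigma^j\circ\ghdisp{0}{1}{c}{N}|V=\ghdisp{0}{1}{\sigma^j_p\circ c}{N}\circ\sigma^j$$
on $V$, exactly as in the proof of Theorem~\ref{th:hol}~(a). Since $c\mapsto\sigma^j_p\circ c$ is a bijection of the set of loops in $L_i(p)$ based at $p$, this shows that $\Hol(\osc f|L_i(p))$ is stable under conjugation by $\sigma^j$, and hence $\Ad(\sigma^j)(\tilde\frakh^i)=\tilde\frakh^i$. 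Carrying this out for every $j=1,\ldots,r$ proves Eq.~\eqref{eq:99}, and the chain of $\scrA$-graded subalgebras follows as explained above.

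The only genuine content --- and thus the ``main obstacle'', modest as it is --- is the first step, i.e.\ verifying that the base maps $\sigma^j_p$ preserve the slice $L_i(p)$; this is read off directly from Eq.~\eqref{eq:Basemap} and the definition of $L_i(p)$, and once it is in place the remainder is a verbatim adaptation of the proof of Theorem~\ref{th:hol}~(a).
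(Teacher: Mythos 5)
Your proposal is correct and follows essentially the same route as the paper: the paper's proof likewise restricts the involutions $\Sigma^j_p$ of Theorem~\ref{th:Symmetries} to $\osc f|L_i(p)$ (using $\sigma^j_p(L_i(p))=L_i(p)$) and then repeats the conjugation-of-holonomy argument from the proof of Theorem~\ref{th:hol}~(a) to get $\Ad(\sigma^j)(\tilde\frakh^i)=\tilde\frakh^i$ for all $j$. Your write-up merely makes explicit the steps the paper leaves as ``the arguments from the proof of Thm.~\ref{th:hol}~(a)''.
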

\begin{proof}
Let $\Sigma_p^j$ denote the symmetries of $\osc f$ described in
Thm.~\ref{th:Symmetries} for $j=1,\ldots ,r$\,. Then the base map of $\Sigma_p^j$ is the map
$\sigma^j_p$ described by Eq.~\eqref{eq:Basemap}; in particular,
$\sigma^j_p(L_i(p))=L_i(p)$ holds for all $j$\,. Therefore, $\Sigma_p^j$ induces a parallel
vector bundle involution on $\osc f|L_i(p)$ along the geodesic symmetry of
$L_i(p)$ (for $j=i$) or the identity map of $L_i(p)$ (in case
$j\neq i$). Now we can use the arguments from the proof of
Thm.~\ref{th:hol}~(a) to deduce our result.
\end{proof}

Let $\frakh^i$ be the subalgebra of $\frakh$ which was introduced in
Thm.~\ref{th:hol}; then $\frakh^i$ is also a subalgebra of $\so(V)^i:=\so(V)_0\oplus\so(V)_i$\,.

\begin{lemma}\label{le:hilf2}
\begin{enumerate}
\item The pullback bundle $D^j\oplus\bbF^{ij}|L_i(p)$ is a parallel subbundle of $\osc f|L_i(p)$\,. The same is true for $D^i\oplus\tilde\bbF|L_i(p)$\,.
\item 
$\tilde\frakh^i$ is already a subalgebra of $\frakh^i$\,. Hence the splitting Eq.~\eqref{eq:99} gets actually reduced to 
\begin{equation}
\tilde\frakh^i=\tilde\frakh^i\cap\so(V)_0\oplus\tilde\frakh^i\cap\so(V)_i\;,
\end{equation}
and $\tilde\frakh^i\subset\frakh^i\subset\so(V)^i$ is a sequence
of $\Z/2\Z$-graded subalgebras.
\end{enumerate}
\end{lemma}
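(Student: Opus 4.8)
The plan is to obtain~(a) directly from the combined Gau{\ss}--Weingarten equation~\eqref{eq:first_Gauss} together with Lemma~\ref{le:hilf1}, and to obtain~(b) by showing that, over $L_i(p)$, enough further summands of $\osc f$ stay $\nabla^N$-parallel to force the holonomy group of $\osc f|L_i(p)$ to commute with the symmetries $\sigma^j$, $j\neq i$.

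For~(a): since a curve in $L_i(p)$ has velocity in $D^i$, along such a curve $\nabla^N_X=\nabla^{\mathrm{sp}}_X+\fetth(X)$ with $X\in D^i$ by~\eqref{eq:first_Gauss}. The split part $\nabla^{\mathrm{sp}}=\nabla^M\oplus\nabla^\bot$ preserves $D^j\oplus\bbF^{ij}$ and $D^i\oplus\tilde\bbF$, because $D^j$ is $\nabla^M$-parallel ($M$ being a Riemannian product) and $\bbF^{ij},\tilde\bbF$ are $\nabla^\bot$-parallel subbundles of $\bot f$. For the $\fetth$-part I use~\eqref{eq:fetth}: for $x\in D^i_q$ we have $h(x,D^j_q)\subset\bbF^{ij}_q$ and $S_\xi x\in D^j_q$ for $\xi\in\bbF^{ij}_q$ by~\eqref{eq:hilf1}, so $\fetth(D^i_q)(D^j_q\oplus\bbF^{ij}_q)\subset D^j_q\oplus\bbF^{ij}_q$; and $h(x,D^i_q)\subset\bbF^{ii}_q\subset\tilde\bbF_q$ while, applying~\eqref{eq:hilf2} to each summand $\bbF^{kk}$ of $\tilde\bbF=\bigoplus_k\bbF^{kk}$, $S_\xi x\in D^i_q$ for $\xi\in\tilde\bbF_q$, so $\fetth(D^i_q)(D^i_q\oplus\tilde\bbF_q)\subset D^i_q\oplus\tilde\bbF_q$. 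Hence $\nabla^N_X$ maps sections of $D^j\oplus\bbF^{ij}|L_i(p)$, resp.\ of $D^i\oplus\tilde\bbF|L_i(p)$, into the same bundle, which proves~(a). The same computation yields one more fact: for $k\neq l$ with $k,l\neq i$ and $x\in D^i_q$, each $\xi\in\bbF^{kl}_q$ has $\g{S_\xi x}{y}=-\g{h(x,y)}{\xi}=0$ for all $y\in T_qM$, since $h(x,y)$ lies in some $\bbF^{im}_q$ with $m\neq i$, or in $\tilde\bbF_q$, both of which are orthogonal to $\bbF^{kl}_q$ by Thm.~\ref{th:decomposition}; thus $\fetth(D^i_q)\,\bbF^{kl}_q=0$, so $\nabla^N=\nabla^\bot$ on $\bbF^{kl}|L_i(p)$, and in particular each $\bbF^{kl}|L_i(p)$ with $k\neq l$, $k,l\neq i$ is a $\nabla^N$-parallel subbundle of $\osc f|L_i(p)$.

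For~(b): fix $j\neq i$. By Eq.~\eqref{eq:sigma1}-\eqref{eq:sigma5} the $(-1)$-eigenspace of $\sigma^j$ on $V$ is $V^-_j:=D^j_p\oplus\bigoplus_{m\neq j}\bbF^{jm}_p=(D^j_p\oplus\bbF^{ij}_p)\oplus\bigoplus_{m\neq i,j}\bbF^{jm}_p$. By~(a) and the last fact above, every summand on the right is a fiber of a $\nabla^N$-parallel subbundle of $\osc f|L_i(p)$, hence so is $V^-_j$; therefore $\Hol(\osc f|L_i(p))\subset\rmO(V)$ preserves $V^-_j$ together with its orthogonal complement, i.e.\ commutes with $\sigma^j$. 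Since $\tilde\frakh^i$ is the Lie algebra of this holonomy group, it centralizes $\sigma^j$ for all $j\neq i$, so $\tilde\frakh^i\subset\bigcap_{j\neq i}\Eig(\Ad(\sigma^j),+1)$. A function $\delta\in\scrA$ satisfies $\delta(j)=0$ for all $j\neq i$ exactly when $\delta\in\{0,\delta_i\}$, so by~\eqref{eq:SplittingIntoEigenspaces} this intersection equals $\so(V)_0\oplus\so(V)_i=\so(V)^i$. Combining $\tilde\frakh^i\subset\frakh$ with the $\scrA$-gradation of $\frakh$ from Thm.~\ref{th:hol}~(a), we get $\tilde\frakh^i\subset\frakh\cap(\so(V)_0\oplus\so(V)_i)=\frakh_0\oplus\frakh_i=\frakh^i$, which is the first assertion.

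The remaining claims are now formal. By Prop.~\ref{p:666}, $\tilde\frakh^i$ is $\scrA$-graded; since it lies in $\so(V)_0\oplus\so(V)_i$, only the degrees $0$ and $\delta_i$ occur, so the splitting~\eqref{eq:99} reduces to $\tilde\frakh^i=(\tilde\frakh^i\cap\so(V)_0)\oplus(\tilde\frakh^i\cap\so(V)_i)$, and assigning to a homogeneous element of degree $\delta$ the parity $|\delta|$ (Remark~\ref{re:finer}; here $|0|=0$, $|\delta_i|=1$, and $[\so(V)_i,\so(V)_i]\subset\so(V)_0$ by~\eqref{eq:rules}) turns $\tilde\frakh^i\subset\frakh^i\subset\so(V)^i$ into a sequence of $\Z/2\Z$-graded subalgebras. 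The one step carrying real content is the observation in~(a) that the off-diagonal normal pieces $\bbF^{kl}$ with $k,l\neq i$ stay $\nabla^N$-parallel after restriction to $L_i(p)$ (because the corresponding shape operators vanish on $D^i$): this is precisely what makes $V^-_j$ a parallel subbundle and thereby powers the centralizer argument in~(b); everything else is routine index bookkeeping.
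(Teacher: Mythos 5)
Your proposal is correct. For part~(a) it is exactly the paper's argument: the paper's proof consists of the single instruction to combine the Gau{\ss}--Weingarten equations with Lemma~\ref{le:hilf1}, which is precisely your computation that both $\nabla^{\mathrm{sp}}_X$ and $\fetth(X)$, $X\in D^i$, preserve $D^j\oplus\bbF^{ij}$ and $D^i\oplus\tilde\bbF$. For part~(b) the paper is much terser: it asserts that part~(a) together with the descriptions~\eqref{eq:unit} and~\eqref{eq:Degree0} of $\so(V)_0$ and $\so(V)_i$ already yields $\tilde\frakh^i\subset\so(V)^i$, and then invokes Eq.~\eqref{eq:99}. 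You instead prove the auxiliary fact that every off-diagonal bundle $\bbF^{kl}|L_i(p)$ with $k,l\neq i$ is $\nabla^N$-parallel (because $S_\xi$ vanishes on $D^i$ for $\xi\in\bbF^{kl}_p$, by the orthogonality statements of Thm.~\ref{th:decomposition}), and use it to show that $\Hol(\osc f|L_i(p))$ preserves the $(-1)$-eigenspace of each $\sigma^j$, $j\neq i$, hence that $\tilde\frakh^i\subset\bigcap_{j\neq i}\Eig(\Ad(\sigma^j),+1)=\so(V)^i$; the rest (intersecting with the $\scrA$-gradations of $\frakh$ and of $\tilde\frakh^i$ from Thm.~\ref{th:hol} and Prop.~\ref{p:666}) agrees with the paper. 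This extra step is not mere pedantry: an endomorphism known only to preserve the two families of subspaces named in part~(a) (equivalently, their common orthogonal complement $\bigoplus_{k<l,\,k,l\neq i}\bbF^{kl}_p$ as a whole) can still mix distinct pieces $\bbF^{kl}_p$ and $\bbF^{k'l'}_p$ with $k,l,k',l'\neq i$ and thus fail to lie in $\so(V)_0\oplus\so(V)_i$, even if it is homogeneous; so your observation that these pieces are individually parallel over $L_i(p)$ is exactly what makes the inclusion $\tilde\frakh^i\subset\so(V)^i$ rigorous, and your write-up is in this respect more complete than the paper's one-line justification.
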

\begin{proof}
For~(a): Use the equations of Gau{\ss} and Weingarten in combination with Lemma~\ref{le:hilf1}.

For~(b): Because of Part~(a) combined with Eq.~\eqref{eq:unit} and~\eqref{eq:Degree0}, we have $\tilde\frakh^i\subset
\so(V)^i$\,. The result immediately follows from Eq.~\eqref{eq:99}.
 \end{proof}

We set $W_i:=D^i_p$ and $\tilde U:=\tilde\bbF_p$\,; then $W_i\oplus
\tilde U$ is the linear space $V_{ii}$ defined by Eq.~\eqref{eqVii}.
By means of the previous lemma, we have $A(V_{ii})\subset V_{ii}$ for each
$A\in\tilde\frakh^i$ and hence we can introduce the corresponding centralizer $\frakc(\tilde\frakh^i|V_{ii})\subset\so(V_{ii})$
(cf. Def.~\ref{de:centralizer}). In particular, Part~(b) of
Lemma~\ref{le:hilf2} yields:

\begin{corollary}\label{co:hilf1}
We have 
\begin{equation}\label{eq:Inclusion1}
\frakc(\frakh^i|V_{ii})\subset\frakc(\tilde\frakh^i|V_{ii})\;.
\end{equation}
\end{corollary}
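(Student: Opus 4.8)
The plan is to obtain Eq.~\eqref{eq:Inclusion1} as a purely formal consequence of the inclusion $\tilde\frakh^i\subset\frakh^i$ recorded in Lemma~\ref{le:hilf2}~(b), together with the elementary observation that the centralizer operation of Def.~\ref{de:centralizer} reverses inclusions. First I would note that both Lie algebras leave the subspace $V_{ii}=W_i\oplus\tilde U$ invariant: for $\frakh^i$ this was already observed in Section~\ref{se:6.1} (it follows from $\frakh^i\subset\so(V)^i=\so(V)_0\oplus\so(V)_i$ together with the explicit descriptions Eq.~\eqref{eq:unit} and~\eqref{eq:Degree0}, which show that every $A\in\so(V)_0$ satisfies $A(W_i)\subset W_i$ and $A(\tilde U)\subset\tilde U$, while every $A\in\so(V)_i$ maps $W_i$ into $\tilde U$ and $\tilde U$ into $W_i$); for $\tilde\frakh^i$ it is exactly the content of Lemma~\ref{le:hilf2}~(a). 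Hence the two restriction maps $\frakh^i\to\so(V_{ii})$, $A\mapsto A|V_{ii}$ and $\tilde\frakh^i\to\so(V_{ii})$, $A\mapsto A|V_{ii}$ are well-defined, and both centralizers appearing in Eq.~\eqref{eq:Inclusion1} are subspaces of the single Lie algebra $\so(V_{ii})$.

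The second and final step is the trivial one: since $\tilde\frakh^i$ is a subset of $\frakh^i$, the family $\{\,A|V_{ii}\mid A\in\tilde\frakh^i\,\}$ is contained in $\{\,A|V_{ii}\mid A\in\frakh^i\,\}$ inside $\so(V_{ii})$. Therefore any $B\in\so(V_{ii})$ commuting with every element of the larger family in particular commutes with every element of the smaller one, which is precisely the assertion $\frakc(\frakh^i|V_{ii})\subset\frakc(\tilde\frakh^i|V_{ii})$.

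There is no genuine obstacle in this argument; the only point that needs care is the bookkeeping, i.e.\ making sure that both restricted families of endomorphisms are viewed as subsets of the \emph{same} $\so(V_{ii})$, which is exactly why the two invariance statements quoted above are invoked. As all of these are already available, the corollary follows at once.
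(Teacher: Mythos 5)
Your argument is correct and is exactly the paper's: the corollary is stated there as an immediate consequence of Lemma~\ref{le:hilf2}~(b), i.e.\ of the inclusion $\tilde\frakh^i\subset\frakh^i$ (together with the fact that both algebras preserve $V_{ii}$), combined with the trivial inclusion-reversing property of centralizers. Your write-up just makes the bookkeeping explicit, so nothing further is needed.
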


As mentioned already before, the vector space
$$
\frakg:=\Spann{R^N(x,y)|V}{x,y\in W_i}$$
is a subalgebra of $\so(V)_0$\,.
Thus Eq.~\eqref{eq:unit} also implies that there are induced representations, 
\begin{align}\label{eq:rho_3}
&\rho_i:\frakg\to\so(W_i),\;A\mapsto
A|W_i:W_i\to W_i\;,\\
\label{eq:rho_4}
&\tilde\rho:\frakg\to\so(\tilde U),\;A\mapsto
A|\tilde U:\tilde U\to \tilde U\;.
\end{align}

We introduce the vector space $\Hom_\frakg(W_i,\tilde U)$ according to
Def.~\ref{de:Hom} and consider the splitting
$\so(V_{ii})=\so(V_{ii})_+\oplus\so(V_{ii})_-$ induced by the splitting
$V_{ii}=W_i\oplus \tilde U$\,. By means of the Theorem of Ambrose/Singer, 
$\frakg\subset\tilde\frakh^i$\,; hence we have (analogous to Lemma~\ref{le:Hom2})

\begin{lemma}\label{le:Hom3}
The natural isomorphism  $\so(V_{ii})_-\to\rmL(W_i,\tilde U)$ 
provided by Lemma~\ref{le:splitting} induces the inclusion
\begin{align}\label{eq:Hom4}
&\frakc(\tilde\frakh^i|V_{ii})\cap\so(V_{ii})_-\hookrightarrow \Hom_\frakg(W_i,\tilde U)\;.
\end{align}
\end{lemma}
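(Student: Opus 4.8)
The plan is to follow the proof of Lemma~\ref{le:Hom2} almost verbatim, replacing $\frakh^i$ by $\tilde\frakh^i$, the pair $(W_j,U_{ij})$ by $(W_i,\tilde U)$, and $\so(V)_i$ by $\so(V_{ii})_-$.

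First I would record that $\frakg$ is a subalgebra of $\tilde\frakh^i$. Indeed, by the Theorem of Ambrose/Singer applied to the $\nabla^N$-parallel subbundle $\osc f|L_i(p)\subset f_i^*TN$, every endomorphism $R^N(x,y)|V$ with $x,y\in W_i=T_pL_i(p)$ lies in $\tilde\frakh^i$, and such endomorphisms span $\frakg$ by definition; this was already observed just above the statement. Next, by Lemma~\ref{le:hilf2}~(a) combined with Eq.~\eqref{eq:unit} and~\eqref{eq:Degree0}, the algebra $\tilde\frakh^i$ preserves the orthogonal decomposition $V_{ii}=W_i\oplus\tilde U$, so that $\frakc(\tilde\frakh^i|V_{ii})\subset\so(V_{ii})$ is well defined; and $\frakg\subset\so(V)_0$ preserves $W_i$ and $\tilde U$ separately, so the restriction representations $\rho_i$ and $\tilde\rho$ of Eq.~\eqref{eq:rho_3}--\eqref{eq:rho_4} are defined. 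In particular, for every $A\in\frakc(\tilde\frakh^i|V_{ii})$ and $B\in\frakg$ we have $[A,B|V_{ii}]=0$ in $\so(V_{ii})$.

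Now let $A\in\frakc(\tilde\frakh^i|V_{ii})\cap\so(V_{ii})_-$. Applying Lemma~\ref{le:splitting} with $V':=V_{ii}$ and $W:=W_i$ (so that $W^\bot=\tilde U$ inside $V_{ii}$), the assignment $A\mapsto A|W_i$ is an injective linear map $\so(V_{ii})_-\to\rmL(W_i,\tilde U)$. It remains to check that $A|W_i$ is $\frakg$-equivariant: for $B\in\frakg$ and $x\in W_i$ we have $Bx\in W_i$ and $Ax\in\tilde U$, and $A(Bx)=B(Ax)$ because $A$ commutes with $B|V_{ii}$; the left-hand side equals $(A|W_i)\big(\rho_i(B)x\big)$ and the right-hand side equals $\tilde\rho(B)\big((A|W_i)x\big)$, whence $A|W_i\in\Hom_\frakg(W_i,\tilde U)$. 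Therefore $A\mapsto A|W_i$ restricts to the claimed injection~\eqref{eq:Hom4}.

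I do not expect a genuine obstacle here: the argument is purely formal once $\frakg\subset\tilde\frakh^i$ and the compatibility of the algebras in sight with the splitting $V_{ii}=W_i\oplus\tilde U$ are in place, and both are immediate from Lemma~\ref{le:hilf2} and the discussion preceding the statement. The only mild subtlety worth a sentence is that $\frakc(\tilde\frakh^i|V_{ii})$ is taken inside $\so(V_{ii})$, not inside $\so(V)$, so the preservation of $V_{ii}$ by $\tilde\frakh^i$ from Lemma~\ref{le:hilf2}~(a) is exactly what makes the statement meaningful.
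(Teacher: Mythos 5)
Your proposal is correct and follows essentially the same route as the paper: the paper also gets $\frakg\subset\tilde\frakh^i$ from the Theorem of Ambrose/Singer and then argues exactly as in the proof of Lemma~\ref{le:Hom2}, i.e.\ $[A,\frakg|V_{ii}]=0$ for $A\in\frakc(\tilde\frakh^i|V_{ii})$ plus the injection from Lemma~\ref{le:splitting} yield the inclusion~\eqref{eq:Hom4}. Your additional remarks on why $V_{ii}$ is preserved (Lemma~\ref{le:hilf2}~(a) together with Eq.~\eqref{eq:unit} and~\eqref{eq:Degree0}) just make explicit what the paper leaves implicit.
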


Set $U_{ii}:=\bbF^{ii}_p$\,; then $U_{ii}$ is a subspace of $\tilde U$\,.

\begin{lemma}\label{le:hilf3}
Let $U_{ii}^\bot$ denote the orthogonal complement of $U_{ii}$ in $\tilde U$\,.
\begin{enumerate}
\item
$\frakg$ acts trivially on $U_{ii}^\bot$\,.
\item We have $\lambda(W_i)\subset U_{ii}$ for each $\lambda\in\Hom_\frakg(W_i,\tilde U)$\,.
\end{enumerate}
\end{lemma}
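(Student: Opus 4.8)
The plan is to prove part~(a) first and then deduce part~(b) from it by a Schur-type argument. The heart of (a) is the claim that $R^N(x_i,y_i)(\tilde\bbF_p)\subset\bbF^{ii}_p$ for all $x_i,y_i\in D^i_p$. Granting this, I would finish (a) as follows: $\frakg$ is the $\R$-span of the endomorphisms $R^N(x_i,y_i)|V$, so every $A\in\frakg$ maps $\tilde\bbF_p$ into $\bbF^{ii}_p\subset\tilde\bbF_p$; hence $\tilde\bbF_p$ is $A$-invariant and $A|\tilde\bbF_p$ is skew-symmetric, and for $\xi\in U_{ii}^\bot$ and arbitrary $\eta\in\tilde\bbF_p$ one gets $\g{A\xi}{\eta}=-\g{\xi}{A\eta}=0$, because $A\eta\in\bbF^{ii}_p=U_{ii}$ is orthogonal to $\xi$; taking $\eta=A\xi\in\tilde\bbF_p$ then forces $A\xi=0$, i.e.\ $\frakg$ acts trivially on $U_{ii}^\bot$.

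To prove the claim I would write $\tilde\bbF_p=\sum_k\bbF^{kk}_p$ (Eq.~\eqref{eq:FirstNormalBundle2}), take a generator $\xi=h(x_k,y_k)\in\bbF^{kk}_p$, and split $R^N(x_i,y_i)\xi=R^\bot(x_i,y_i)\xi+[\fetth(x_i),\fetth(y_i)]\xi$ by Eq.~\eqref{eq:Gauss-Ricci}. For the $R^\bot$-term I would invoke $R^\bot(x_i,y_i)h(x_k,y_k)=h(R^M(x_i,y_i)x_k,y_k)+h(x_k,R^M(x_i,y_i)y_k)$ from \cite[Prop.~4~(d)]{J1}; since $M$ is a Riemannian product, $R^M(x_i,y_i)$ preserves $D^i_p$ and annihilates $D^k_p$ for $k\neq i$, so this term is zero when $k\neq i$ and lies in $\bbF^{ii}_p$ when $k=i$. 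For the bracket term I would note, via Eq.~\eqref{eq:fetth} and Eq.~\eqref{eq:hilf2}, that $\fetth(y_i)\xi=-S_\xi y_i\in D^i_p$ in both cases, whence $\fetth(x_i)(-S_\xi y_i)=-h(x_i,S_\xi y_i)\in\bbF^{ii}_p$, and symmetrically for the other term, so that $[\fetth(x_i),\fetth(y_i)]\xi\in\bbF^{ii}_p$. Altogether $R^N(x_i,y_i)(\bbF^{kk}_p)\subset\bbF^{ii}_p$ for every $k$, which proves the claim.

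For part~(b), the same computation also yields $R^N(x_i,y_i)(\bbF^{ii}_p)\subset\bbF^{ii}_p$, so $\frakg$ preserves $U_{ii}$; combined with (a), $\tilde U=U_{ii}\oplus U_{ii}^\bot$ is a decomposition into $\frakg$-submodules with $U_{ii}^\bot$ trivial, and the orthogonal projection $\pi\colon\tilde U\to U_{ii}^\bot$ is $\frakg$-equivariant. Given $\lambda\in\Hom_\frakg(W_i,\tilde U)$, I would consider $\pi\circ\lambda\in\Hom_\frakg(W_i,U_{ii}^\bot)$; since $\rho_i(\frakg)=\hol(\bar M_i)$ acts irreducibly on $W_i$ (Lemma~\ref{le:hol(barMi)}) and, as $\dim W_i=m_i\geq 3$, non-trivially, Lemma~\ref{le:Schur1} gives $\pi\circ\lambda=0$, i.e.\ $\lambda(W_i)\subset U_{ii}$.

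The only substantive obstacle is the key claim in part~(a): distributing the three Gau{\ss}--Codazzi--Ricci contributions of $R^N(x_i,y_i)$ correctly across the product factors $D^1_p,\dots,D^r_p$. Once that is secured, part~(a) is a one-line skew-symmetry argument and part~(b) is a routine application of Schur's lemma.
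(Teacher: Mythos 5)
Your argument is correct, and for part~(a) it takes a mildly different route than the paper, built from the same ingredients. The paper shows directly that every generator $R^N(x_i,y_i)$ of $\frakg$ annihilates $\xi\in U_{ii}^\bot$: orthogonality of $\xi$ to $U_{ii}$ and to the $\bbF^{kl}_p$ ($k\neq l$) gives $S_\xi|W_i=0$, so the bracket term in \eqref{eq:Gauss-Ricci} vanishes, and the remaining term $R^\bot(W_i\times W_i)\,\xi$ is killed by decomposing $\xi$ inside $\bbF^\sharp_p\oplus\sum_{j\neq i}\bbF^{jj}_p$ (the flat part is spanned by $\nabla^\bot$-parallel sections, the rest by \cite[Prop.~4~(d)]{J1} and $R^M(D^i\times D^i)\,D^j=0$). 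You instead prove the inclusion $R^N(x_i,y_i)(\tilde\bbF_p)\subset\bbF^{ii}_p$ --- using exactly the same tools, namely \eqref{eq:Gauss-Ricci}, \cite[Prop.~4~(d)]{J1}, the product structure of $R^M$, and \eqref{eq:hilf2} --- and then get triviality on $U_{ii}^\bot$ by skew-symmetry of $R^N(x_i,y_i)$, taking $\eta=A\xi$. This avoids any mention of the maximal flat subbundle $\bbF^\sharp$ and of the explicit location of $U_{ii}^\bot$ inside $\tilde U$, at the cost of the (easy) extra observation that $\tilde U$ is invariant and the restriction is skew; the two versions are essentially equivalent in length and each implies the other. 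Your part~(b) is the same argument the paper points to (the ``Case~3/Case~4'' reasoning in the proof of \eqref{eq:Estimate1}): $\tilde U=U_{ii}\oplus U_{ii}^\bot$ is a splitting into $\frakg$-submodules with trivial second summand, $\rho_i(\frakg)=\hol(\bar M_i)$ acts irreducibly (hence, as $m_i\geq 3$, non-trivially) on $W_i$ by Lemma~\ref{le:hol(barMi)} --- which is where the standing irreducibility assumption on $\bar M_i$ from Section~\ref{se:6.1} enters --- so Schur's Lemma forces $\pi\circ\lambda=0$; this is just Lemma~\ref{le:Schur2}~(f) combined with Lemma~\ref{le:Schur1}, as in the paper.
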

\begin{proof}
For~(a): Let $\xi\in U_{ii}^\bot$ be given. Using Lemma~\ref{le:hilf1}, we immediately obtain that $S_\xi\,x=0$ for all $x\in W_i$\,. Hence
\begin{equation}\label{eq:592}
\forall x,y\in W_i:\;[\fetth(x),\fetth(y)]\,\xi=0\;.
\end{equation}

Because of Eq.~\eqref{eq:592} and the Equations of Gau{\ss}, Codazzi and
Ricci~\eqref{eq:Gauss-Ricci}, it thus remains to show that
\begin{equation}\label{eq:594}
R^\bot(W_i\times W_i)\,\xi=\{0\}\;.
\end{equation}

For this: Let $\bbF^\sharp$ denote the maximal flat subbundle of
$\bbF$ and set $U^\sharp:=\bbF^\sharp_p$\,.
As a consequence of Thm.~\ref{th:decomposition}, $\xi$ belongs to the linear space
\begin{equation}
\label{eq:593}
U^\sharp\oplus\sum_{j\neq i}U_{jj}\;.
\end{equation}
In case $\xi\in U^\sharp$\,, Eq.~\eqref{eq:594} is trivial.
In case $\xi\in U_{jj}$ with $j\neq i$\,, we may assume that there exist $x_j,y_j\in
W_j$ with $\xi=h(x_j,y_j)$\,.
Then, according to~\cite[Prop.~4~(d)]{J1}, and since $M$ is the
Riemannian product of the $M_j$'s,
\begin{align*}
R^\bot(W_i\times W_i)\,\xi=h(R^M(W_i\times W_i)\,x_j,y_j)+h(x_j,R^M(W_i\times W_i)\,y_j)=\{0\}\;.
\end{align*}
Eq.~\eqref{eq:594} follows.

(b) is now a consequence of Part~(a) and arguments given already in the
proof of Eq.~\eqref{eq:Estimate1}.
 \end{proof}

Now recall that $f_i$ is a parallel isometric immersion, according to
Cor.~\ref{co:slice}; then $U_{ii}$ is the first normal space and
$V_i:=W_i\oplus U_{ii}$ is the second osculating space of $f_i$ at $p$\,; note
that $V_i$ is a subspace of $V_{ii}$\,.
Let $\osc f_i$ denote the second osculating bundle of $f_i$ and let us apply
Def.~\ref{de:HolonomyLieAlgebra} to define its extrinsic holonomy Lie algebra $\bar\frakh^i$\,; then
$\bar\frakh^i$ is a subalgebra of $\so(V_i)$\,. Furthermore, there is the
corresponding centralizer $\frakc(\bar\frakh^i)\subset\so(V_i)$ (cf. Def.~\ref{de:centralizer}).

\begin{corollary}\label{co:hilf2}
There exists an injective map
\begin{equation}\label{eq:Inclusion2}
\frakc(\tilde\frakh^i|V_{ii})\cap\so(V_{ii})_- \hookrightarrow\frakc(\bar\frakh^i)\cap\so(V_i)_-\;.
\end{equation}
\end{corollary}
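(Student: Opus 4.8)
The plan is to realize the injection of Cor.~\ref{co:hilf2} concretely as the restriction map $A\mapsto A|V_i$, where $V_i=W_i\oplus U_{ii}\subset V_{ii}$ is the second osculating space of $f_i$ at $p$.

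First I would pin down how $\bar\frakh^i$ sits inside $\tilde\frakh^i$. Since $L_i(p)\subset M$ is totally geodesic and $\bbF^{ii}|L_i(p)$ is the first normal bundle of $f_i$ (compare the proof of Cor.~\ref{co:slice}), the second osculating bundle of $f_i$ is $\osc f_i=D^i\oplus\bbF^{ii}|L_i(p)$, which has fiber $V_i$ at $p$. Using the equations of Gau{\ss} and Weingarten together with Lemma~\ref{le:hilf1}~(b), and the facts that $D^i$ is a parallel subbundle of $TM$ with $h(D^i,D^i)\subset\bbF^{ii}$ and that $\bbF^{ii}$ is a $\nabla^\bot$-parallel subbundle of $\bbF$, one checks that $\osc f_i$ is a $\nabla^N$-parallel subbundle of $\osc f|L_i(p)$. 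Consequently the holonomy group of $\osc f|L_i(p)$ leaves $V_i$ invariant, and restriction to $V_i$ maps it onto the holonomy group of $\osc f_i$; passing to Lie algebras, $\tilde\frakh^i$ leaves $V_i$ invariant and $\bar\frakh^i=\{A|V_i : A\in\tilde\frakh^i\}$.

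Next I would fix $A\in\frakc(\tilde\frakh^i|V_{ii})\cap\so(V_{ii})_-$ and show that $A$ leaves $V_i$ invariant, that $A|V_i\in\so(V_i)_-$, and that $A$ vanishes on the orthogonal complement $U_{ii}^\bot$ of $U_{ii}$ in $\tilde U$. Indeed, by Lemma~\ref{le:Hom3} the assignment $A\mapsto A|W_i$ sends $A$ into $\Hom_\frakg(W_i,\tilde U)$, so Lemma~\ref{le:hilf3}~(b) gives $A(W_i)\subset U_{ii}$; since $A\in\so(V_{ii})_-$ we also have $A(\tilde U)\subset W_i$, and for $u\in U_{ii}^\bot$ and $w\in W_i$ the skew-symmetry of $A$ yields $\g{Au}{w}=-\g{u}{Aw}=0$ because $Aw\in U_{ii}$, whence $A|U_{ii}^\bot=0$. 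Combining $A(W_i)\subset U_{ii}$ with $A(U_{ii})\subset W_i$ shows $A(V_i)\subset V_i$ and $A|V_i\in\so(V_i)_-$, and, via the orthogonal splitting $V_{ii}=V_i\oplus U_{ii}^\bot$, that $A$ is recovered from $A|V_i$ by extension by zero.

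Finally I would check that $A|V_i$ centralizes $\bar\frakh^i$: given $\bar B=B|V_i\in\bar\frakh^i$ with $B\in\tilde\frakh^i$, both $A$ and $B|V_{ii}$ leave $V_i$ invariant and $[A,B|V_{ii}]=0$ on $V_{ii}$, so the restriction of $[A,B|V_{ii}]$ to $V_i$, namely $[A|V_i,\bar B]$, vanishes as well; and $A\mapsto A|V_i$ is injective since $A|U_{ii}^\bot=0$. This yields the asserted injection $\frakc(\tilde\frakh^i|V_{ii})\cap\so(V_{ii})_-\hookrightarrow\frakc(\bar\frakh^i)\cap\so(V_i)_-$. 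The step that calls for genuine care is the first one: correctly identifying $\bar\frakh^i$ with $\tilde\frakh^i|V_i$ (which rests on $\osc f_i$ being $\nabla^N$-parallel inside $\osc f|L_i(p)$, not merely inside $f_i^*TN$) and keeping the two ``$-$''-eigenspace conventions for $V_{ii}=W_i\oplus\tilde U$ and $V_i=W_i\oplus U_{ii}$ straight; everything else is bookkeeping with skew-symmetry and the already-established Lemmas~\ref{le:Hom3} and~\ref{le:hilf3}.
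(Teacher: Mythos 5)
Your proposal is correct and follows essentially the same route as the paper: the injection is realized as $A\mapsto A|V_i$, with $A(W_i)\subset U_{ii}$ from Lemma~\ref{le:Hom3} and Lemma~\ref{le:hilf3}, $A|U_{ii}^\bot=0$ by skew-symmetry, and the centralizer property via the surjectivity of the restriction $\tilde\frakh^i\to\bar\frakh^i$ coming from the $\nabla^N$-parallelity of $\osc f_i\subset\osc f|L_i(p)$. You even make explicit two points the paper leaves implicit (the surjectivity argument and the injectivity via extension by zero), so nothing is missing.
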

\begin{proof}
Let $A\in\frakc(\tilde\frakh^i|V_{ii})\cap\so(V_{ii})_-$ be given and
$U_{ii}^\bot$ be defined as in Lemma~\ref{le:hilf3}. We will show that
\begin{align}\label{eq:hilf10}
&A(W_i)\subset U_{ii}\ \text{and}\ A(U_{ii})\subset W_i\;,\\
\label{eq:hilf100}
&A|U_{ii}^\bot=0\;,\\
\label{eq:hilf1000}
&[A|V_i,\bar\frakh^i]=\{0\}\;;
\end{align}
hence $A\mapsto A|V_i$ gives the desired map~\eqref{eq:Inclusion2}.

For Eq.~\eqref{eq:hilf10} and~\eqref{eq:hilf100}: On the one hand, we have
$A(\tilde U)\subset W_i$ (since
$A\in\so(V_{ii})_-$). On the other hand, $A|W_i\in\Hom_\frakg(W_i,\tilde U)$
according to Lemma~\ref{le:Hom3}, hence $A(W_i)\subset U_{ii}$ because of
Lemma~\ref{le:hilf3}. Now Eq.~\eqref{eq:hilf10} follows and, moreover,
$\g{A\,u}{w}=-\g{u}{A\,w}=0$ for all $u\in U_{ii}^\bot$\,, $w\in W_i$\,.
Thus Eq.~\eqref{eq:hilf100} also holds.

For Eq.~\eqref{eq:hilf1000}: Because $V_i$ is the second osculating space
of $f_i$ at $p$ and since, furthermore, $\osc f_i\subset\osc f|L_i(p)$ is a parallel
subbundle as mentioned already before, $B(V_i)\subset V_i$ for each
$B\in\tilde\frakh^i$ and, moreover, $\tilde\frakh^i\to\bar\frakh^i,\; B\mapsto B|V_i$
is a surjective map.
Furthermore, $[A,B]=0$ for all $B\in\tilde\frakh^i$\,. The previous together
with Eq.~\eqref{eq:hilf10} obviously implies Eq.~\eqref{eq:hilf1000}.
 \end{proof} 

\paragraph{Proof of Eq.~\eqref{eq:Estimate2}}
\begin{proof}
On the one hand, by means of Corollary~\ref{co:hilf1}
and~\ref{co:hilf2}, there exists a sequence of inclusions,
$$
\frakc(\frakh^i|V_{ii})\cap\so(V_{ii})_-\stackrel{\eqref{eq:Inclusion1}}\hookrightarrow\frakc(\tilde\frakh^i|V_{ii})\cap\so(V_{ii})_-
\stackrel{\eqref{eq:Inclusion2}}\hookrightarrow\frakc(\bar\frakh^i)\cap\so(V_i)_-\;.
$$
Hence $\dim(\frakc(\frakh^i|V_{ii})\cap\so(V_{ii})_-)\leq \dim(\frakc(\bar\frakh^i)\cap\so(V_i)_-)$\,.

On the other hand, by assumption, $f_i$ is a parallel isometric immersion
which is defined on the simply connected, irreducible symmetric space $L_i(p)$
such that $f_i(L_i(p))$ is not contained in any flat of $N$ and that
$\dim(L_i(p))\geq 3$\,. This situation was already investigated
in~\cite[Sec.~3.2]{J2}: According to Prop.~12 there, we have $\dim(\frakc(\bar\frakh^i)\cap\so(V_i)_-)\leq
2$\,. Now Eq.~\eqref{eq:Estimate2} follows.
 \end{proof}

\section*{Acknowledgements}
I want to thank Professor Ernst Heintze for a helpful discussion which lead to 
Theorem~\ref{th:decomposition}.


\section*{Appendix}
\begin{appendix}
\section{Representation theory}
\label{se:Appendix}
Let $\frakk$ be a Lie algebra over $\R$\,, $V$ be a vector space
over a field $\bbK\in\{\R,\C\}$\, and $\rho:\frakk\to\gl_\bbK(V)$ be a representation.
Then we also say that ``$V$ is a $\frakk$-module''. $V$ is called
``irreducible'' if $\{0\}$ and $V$ are the only invariant subspaces of $V$\,; otherwise, $V$
is called ``reducible''.

\begin{definition}\label{de:Hom}
Let a second linear space $V'$ over $\bbK$ and a representation
$\rho':\frakk\to\gl_\bbK(V')$ be given. A linear map $\lambda:V\to
V'$ satisfying $\lambda(\rho(A)\,v)=\rho'(A)\,\lambda(v)$ for all
$A\in \frakk$ and $v\in V$ will be briefly called a ``homomorphism''. 
Then set of homomorphisms, denoted by $\Hom_\frakk(V,V')$\,, is a vector space over $\bbK$\,, too.
\end{definition}

The next lemma is ``standard''.

\begin{lemma}[Schur's Lemma]\label{le:Schur1}
If $\lambda:V\to V'$ is a homomorphism, then 
both the kernel and the image of $\lambda$ are invariant subspaces of $V$ and
$V'$\,, respectively. 
In particular, if $V$ is an irreducible $\frakk$-module and $\lambda\neq 0$\,,
then $\lambda$ is injective.
\end{lemma}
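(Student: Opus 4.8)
The plan is to argue directly from the defining intertwining relation $\lambda(\rho(A)\,v)=\rho'(A)\,\lambda(v)$ for all $A\in\frakk$, $v\in V$; this relation transports the module structure across $\lambda$ in both directions, and everything else is immediate.

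First I would check that $\Kern(\lambda)$ is $\rho$-invariant. If $v\in\Kern(\lambda)$ and $A\in\frakk$, then $\lambda(\rho(A)\,v)=\rho'(A)\,\lambda(v)=\rho'(A)\,0=0$, so $\rho(A)\,v\in\Kern(\lambda)$; since $\Kern(\lambda)$ is obviously a linear subspace, it is an invariant submodule of $V$. Dually, I would check that the image $\lambda(V)$ is $\rho'$-invariant: for $w=\lambda(v)\in\lambda(V)$ and $A\in\frakk$ we have $\rho'(A)\,w=\rho'(A)\,\lambda(v)=\lambda(\rho(A)\,v)\in\lambda(V)$, and $\lambda(V)$ is again a linear subspace, hence an invariant submodule of $V'$.

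For the last assertion, assume $V$ is irreducible and $\lambda\neq 0$. Then $\Kern(\lambda)$ is an invariant subspace of $V$ which is proper (it cannot be all of $V$, precisely because $\lambda$ is not identically zero), so by irreducibility $\Kern(\lambda)=\{0\}$, i.e.\ $\lambda$ is injective.

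I do not expect a genuine obstacle here: this is the elementary half of Schur's Lemma and the argument is purely formal, requiring neither a hypothesis on the field $\bbK$ nor finite-dimensionality. The only point to keep straight is the bookkeeping — the kernel lives inside $V$ and carries the $\rho$-action, whereas the image lives inside $V'$ and carries the $\rho'$-action.
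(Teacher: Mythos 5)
Your proof is correct and is exactly the standard argument; the paper itself omits the proof, labelling the lemma as ``standard'', and your verification via the intertwining relation $\lambda(\rho(A)\,v)=\rho'(A)\,\lambda(v)$ is precisely what is intended. No gaps.
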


In the following, we will always assume that $V$ is a Euclidian space. The proof of the following lemma is left to the reader.

\begin{lemma}\label{le:Schur2}
\begin{enumerate}
\item Let invariant subspaces $U$ and $W$ of $V$ be given.
If  $U$ and $W$ both are irreducible, then we have $U\bot W$ unless $U\cong V$
(as $\frakk$-modules). 
Therefore, if $U$ is isomorphic  to a direct sum of non-trivial and irreducible $\frakk$-modules and
$W$ is a trivial $\frakk$-module, then $U\bot W$\,.
\item
We can always find a decomposition $V=W_1\oplus\cdots\oplus W_k$ into
pairwise orthogonal, invariant subspaces $W_i$ with the following property: There exists an
irreducible $\frakk$-module $W_i'$ and an integer $m_i$ such that $W_i$ is isomorphic to
the direct sum of $m_i$ copies of $W_i'$ for $i=1,\ldots,k$\,, and such that $\{W_1',\ldots,W_k'\}$ are pairwise
non-isomorphic (as $\frakk$-modules).  
\item 
The subspaces $W_i$\,, the ``multiplicities'' $m_i$ and the modules $W_i'$ are uniquely determined 
(the latter ones only up to isomorphy) for $i=1,\ldots,k$
(cf.~\cite[Ch.~XVIII, Prop.~1.2]{La}).
\end{enumerate}
Let a second Euclidian space $V'$ over $\bbK$ and a representation
$\tilde \rho:\frakk\to\gl_\bbK(V')$ be given. 
Furthermore,  let $V\cong \bigoplus_{j\in J}W_j$ be any decomposition into
irreducible submodules $W_j$\,.
\begin{enumerate}
\addtocounter{enumi}{3} 
\item 
If $\lambda:V\to V'$ is a surjective homomorphism, then
there exists a subset $\tilde J\subset J$ such that 
$\lambda|\bigoplus_{j\in\tilde J} W_j$ induces an isomorphism onto $V'$\,.
\item For every $\lambda\in\Hom_\frakk(V,V')$ the adjoint map
  $\lambda^*:V'\to V$ belongs to $\Hom_\frakk(V',V)$\,, and
  $\lambda\mapsto\lambda^*$ induces $\Hom_\frakk(V,V')\cong\Hom_\frakk(V',V)$\,.
\item Suppose that $V'=W\oplus U$ is the orthogonal sum of two invariant subspaces. If
  $\Hom_\frakk(V,W)=\{0\}$\,, then $\lambda(V)\subset U$ for each
  $\lambda\in\Hom_\frakk(V,V')$\,.
\end{enumerate}
\end{lemma}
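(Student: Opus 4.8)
The plan is to reduce all six assertions to Schur's Lemma (Lemma~\ref{le:Schur1}) together with two standard facts valid because $\rho$ is an orthogonal representation (i.e.\ $\rho(\frakk)\subset\so(V)$; this orthogonality is the hypothesis on which the whole lemma rests): the orthogonal complement of an invariant subspace is again invariant --- so $V$ is a finite orthogonal direct sum of irreducible submodules --- and the orthogonal projection of $V$ onto an invariant subspace is a $\frakk$-homomorphism.

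Part~(a): if $U,W\subset V$ are irreducible invariant subspaces, the restriction $\pi_U|W\colon W\to U$ of the orthogonal projection onto $U$ is a homomorphism; by Schur's Lemma it is either $0$, giving $W\subset U^{\bot}$, or injective with nonzero --- hence equal to $U$ --- image, giving an isomorphism $U\cong W$. (Here the statement ``$U\cong V$'' should read ``$U\cong W$''.) For the second half, write $U$ as a sum of non-trivial irreducible submodules; by the first half each is orthogonal to the trivial module $W$, so $U\bot W$. Part~(b): repeatedly choosing an irreducible invariant subspace and passing to its orthogonal complement yields $V=\bigoplus_{j}Y_{j}$ with the $Y_{j}$ irreducible and pairwise orthogonal; grouping the $Y_{j}$ of a common isomorphism type into a subspace $W_{i}$ with chosen irreducible representative $W_{i}'$, part~(a) forces the resulting $W_{i}$ to be pairwise orthogonal, and $W_{i}$ is by construction isomorphic to $m_{i}$ copies of $W_{i}'$ where $m_{i}$ is the number of such $Y_{j}$. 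Part~(c): each $W_{i}$ is in fact the full isotypic component of type $W_{i}'$ --- any irreducible submodule isomorphic to $W_{i}'$ has, by Schur's Lemma and part~(a), zero projection onto every $W_{l}$ with $l\ne i$, hence lies in $W_{i}$ --- so the $W_{i}$ are canonical; the $W_{i}'$ are then determined up to isomorphism and the multiplicities $m_{i}$ read off, which is precisely \cite[Ch.~XVIII, Prop.~1.2]{La}.

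Part~(d): fix $V=\bigoplus_{j\in J}W_{j}$ and construct $\tilde J$ greedily. As long as $\sum_{j\in\tilde J}\lambda(W_{j})\ne V'$, surjectivity of $\lambda$ provides some $j_{0}\notin\tilde J$ with $\lambda(W_{j_{0}})\not\subset\sum_{j\in\tilde J}\lambda(W_{j})$; adjoining $j_{0}$ strictly increases $\dim\big(\sum_{j\in\tilde J}\lambda(W_{j})\big)$, so after finitely many steps $\lambda$ maps $\bigoplus_{j\in\tilde J}W_{j}$ onto $V'$. Injectivity of $\lambda$ on $\bigoplus_{j\in\tilde J}W_{j}$ is preserved at each step: its kernel is invariant, and the (equivariant) projection of this kernel onto the newly adjoined summand $W_{j_{0}}$ along the others is an invariant subspace of $W_{j_{0}}$, hence by Schur's Lemma either $\{0\}$ --- forcing the kernel into the previous sum, where $\lambda$ was already injective --- or all of $W_{j_{0}}$ --- forcing $\lambda(W_{j_{0}})\subset\sum_{j\in\tilde J\setminus\{j_{0}\}}\lambda(W_{j})$, contrary to the choice of $j_{0}$. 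Part~(e): for $A\in\frakk$ the endomorphisms $\rho(A)$ and $\rho'(A)$ are skew-symmetric, so $\langle\lambda^{*}\rho'(A)v',v\rangle=-\langle v',\rho'(A)\lambda v\rangle=-\langle v',\lambda\rho(A)v\rangle=\langle\rho(A)\lambda^{*}v',v\rangle$ for all $v,v'$, i.e.\ $\lambda^{*}\in\Hom_\frakk(V',V)$; the map $\lambda\mapsto\lambda^{*}$ is linear and satisfies $(\lambda^{*})^{*}=\lambda$, hence is the claimed isomorphism. Part~(f): the orthogonal projection $\pi_{W}\colon V'\to W$ is a homomorphism, so $\pi_{W}\circ\lambda\in\Hom_\frakk(V,W)=\{0\}$ and therefore $\lambda(V)\subset\ker\pi_{W}=U$.

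The only part with genuine content is~(d); everything else is routine once Schur's Lemma and complete reducibility are in hand. Accordingly the one point to watch is that complete reducibility --- and the equivariance of orthogonal projections --- must be available, which is why $\rho$ is taken to be orthogonal: for a non-orthogonal representation already part~(a) is false.
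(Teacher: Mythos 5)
Your proof is correct. The paper in fact gives no argument for this lemma --- it explicitly leaves the proof to the reader, citing \cite{La} only for the uniqueness assertion~(c) --- and what you write is precisely the intended standard argument: complete reducibility and equivariance of orthogonal projections for an orthogonal (skew-symmetric) representation, together with Schur's Lemma, with the greedy exchange argument carrying the only nontrivial part~(d); you also rightly observe that ``$U\cong V$'' in~(a) should read ``$U\cong W$'' and that orthogonality of the action (made explicit in the Lie group version, Remark~\ref{re:Reformulate}) is the implicit hypothesis. Two cosmetic points: in the second half of~(a) the trivial module $W$ need not itself be irreducible, so decompose it into lines before invoking the first half; and over $\bbK=\C$ the map $\lambda\mapsto\lambda^{*}$ in~(e) is conjugate-linear rather than linear, which still yields the asserted isomorphism.
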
 

\begin{remark}\label{re:Reformulate}
Let a Lie group $K$\,, a vector 
space $V$ and a representation $\rho:K\to\rm\Gl_\bbK(V)$ be given. Then we also say that ``$K$
acts linearly $V$''. Similar as before, we define irreducible and reducible $K$-actions. 
Furthermore, there are corresponding ``Lie group versions'' of Lemma~\ref{le:Schur1} and
(in case $V$ is Euclidian and $K$ acts orthogonally on $V$) Lemma~\ref{le:Schur2}.
\end{remark}

Given a Lie algebra $\frakk$ over $\R$\,, a Euclidian space $V$ and a representation $\rho:\frakk\to\so(V)$\,, 
let $V^\C:=V\oplus \i V$ denote the  corresponding
``complexification'' and $\rho^\C:\frakk\to\gl_\C(V^\C)$ be the canonically induced representation.

\begin{lemma}\label{le:Komplexifikation}
\begin{enumerate}
\item 
Suppose that there exists $J\in\rmO(V)$ with $J^2=-\Id$ (equipping already $V$ with the
structure of a complex space) such that $\rho(\frakk)\subset\fraku(V)$\,. Then
\begin{equation}\label{eq:Vpm}
V_{\pm\i}:=\Menge{v\mp\i\,J\,v}{v\in V}\;.
\end{equation}
induces the decomposition $V^\C=V_{+\i}\oplus V_{-\i}$ into invariant subspaces such that 
\begin{equation}
\label{eq:IsomorphismOverC1}
V\to V_{+\i},\,v\mapsto 1/2\,(v-\i\,J\,v)
\end{equation}
is a complex linear isomorphism of $\frakk$-modules.

\item Conversely, if $V$ is irreducible, but $V^\C$ is reducible, 
then there exists necessarily some $J\in\rmO(V)$ with $J^2=-\Id$ such that $\rho(\frakk)\subset\fraku(V)$\,. 
\end{enumerate}
\end{lemma}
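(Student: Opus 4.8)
The first assertion is a direct computation, and I would dispatch it first. The plan is to extend $J$ complex-linearly to an operator $J^\C$ on $V^\C=V\oplus\i V$; then $(J^\C)^2=-\Id$, so $J^\C$ is diagonalizable with eigenvalues $\pm\i$, and one checks at once that the spaces $V_{\pm\i}$ of Eq.~\eqref{eq:Vpm} are precisely the corresponding eigenspaces, whence $V^\C=V_{+\i}\oplus V_{-\i}$. Since the hypothesis $\rho(\frakk)\subset\fraku(V)$ includes in particular that every $\rho(A)$ commutes with $J$, the complexified operators $\rho^\C(A)$ commute with $J^\C$ and therefore leave each eigenspace invariant, so $V_{\pm\i}$ are $\frakk$-submodules. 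Finally I would check that the map of Eq.~\eqref{eq:IsomorphismOverC1}, $v\mapsto\tfrac12(v-\i\,Jv)$, is an $\R$-linear bijection onto $V_{+\i}$ with $\varphi(Jv)=\i\,\varphi(v)$ — so it is $\C$-linear once $V$ carries the complex structure $J$ — and that it intertwines $\rho$ with $\rho^\C|V_{+\i}$, the latter being automatic because $\rho^\C$ is the complexification of $\rho$. This yields the claimed isomorphism of $\frakk$-modules.

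For the converse I would set $D:=\End_\frakk(V)$, which by Schur's Lemma (Lemma~\ref{le:Schur1}) is a division algebra over $\R$. The first point is that $D\neq\R$: because $\rho(\frakk)\subset\so(V)$, the Euclidean product extends to a $\rho^\C(\frakk)$-invariant positive definite Hermitian form on $V^\C$, so $V^\C$ is completely reducible; being reducible by hypothesis, it then splits as a non-trivial direct sum. On the other hand $\End_\frakk(V^\C)\cong D\otimes_\R\C$, so if $D=\R$ this algebra would be the field $\C$, forcing $V^\C$ irreducible — a contradiction. Hence $D$ contains non-real elements, and in particular $D$ carries the candidate for a complex structure; the remaining task is to make such a structure orthogonal.

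The decisive step is the orthogonality, and here I would exploit that transposition (the adjoint with respect to the Euclidean product) maps $D$ into itself: if $A\in D$ then, using $\rho(X)^\top=-\rho(X)$, the relation $A\rho(X)=\rho(X)A$ transposes to $A^\top\rho(X)=\rho(X)A^\top$. Thus $\tau\colon A\mapsto A^\top$ is an $\R$-algebra anti-involution of $D$ fixing $\R\,\Id$ pointwise. It cannot be the identity: otherwise every $A\in D$ would be symmetric, hence $\R$-diagonalizable with $\frakk$-invariant eigenspaces, and irreducibility of $V$ would give $A\in\R\,\Id$, i.e.\ $D=\R$. So $\tau$ has non-zero $(-1)$-eigenspace; pick $0\neq x\in D$ with $x^\top=-x$. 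Then $x^\top x=-x^2$ is a positive definite symmetric element of $D$ whose eigenspaces are $\frakk$-invariant, so by irreducibility $x^\top x=c\,\Id$ for some $c>0$; hence $x^2=-c\,\Id$, and $J:=c^{-1/2}x$ satisfies $J^2=-\Id$ and $J^\top J=-J^2=\Id$, i.e.\ $J\in\rmO(V)$. Since $J\in D$ commutes with $\rho(\frakk)$ and $\rho(\frakk)\subset\so(V)$, and $\fraku(V)=\so(V)\cap\{\,B:BJ=JB\,\}$ for this complex structure, we get $\rho(\frakk)\subset\fraku(V)$. I expect this converse direction to be the only real obstacle — specifically the upgrade from ``some'' complex structure in $D$ to an orthogonal one via the anti-involution $\tau$, together with the reduction $D\neq\R$ which genuinely uses the invariant scalar product; part (a) and the algebraic bookkeeping are routine.
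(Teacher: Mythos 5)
Your proof is correct, and it is worth noting that the paper itself offers no proof of this lemma at all --- it is stated in the appendix as a standard fact of real representation theory --- so your argument supplies exactly what the paper takes for granted. Part~(a) is the routine eigenspace computation for $J^\C$ and you carry it out correctly, including the check that $v\mapsto\tfrac12(v-\i\,Jv)$ intertwines $\rho$ with $\rho^\C|V_{+\i}$ and is $\C$-linear for the complex structure $J$ on $V$. For part~(b) you use the classical commutant argument: $D=\End_\frakk(V)$ is a division algebra by Schur, $\End_\frakk(V^\C)\cong D\otimes_\R\C$ under base change, and the invariant Hermitian extension of the scalar product gives complete reducibility of $V^\C$, which is the point that rules out $D=\R$ (without complete reducibility, a reducible module can perfectly well have a field as commutant, so your insertion of that step is not cosmetic). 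The passage from ``$D$ contains non-real elements'' to an \emph{orthogonal} complex structure via the transpose anti-involution $\tau$ on $D$ --- $\tau\neq\Id$ since symmetric elements of $D$ are scalar by irreducibility, then normalizing a skew-symmetric $x$ using $x^\top x=c\,\Id$, $c>0$ --- is exactly the right way to obtain $J\in\rmO(V)$ with $\rho(\frakk)\subset\fraku(V)=\so(V)\cap\{B:BJ=JB\}$, rather than merely some invariant complex structure. An alternative standard route would be to pick an irreducible submodule $W\subset V^\C$, observe that irreducibility of $V$ forces $V^\C=W\oplus\overline W$, and transport multiplication by $\i$ on $W$ back to $V$; but that version still needs a separate argument (essentially your $x^\top x=c\,\Id$ step again) to make the resulting $J$ orthogonal, so your approach is the cleaner one for the statement as formulated.
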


Let $\frakk_i$ be a Lie algebra over $\R$\,, $V_i$ be a vector space over $\bbK$ 
and $\rho_i:\frakk_i\to\gl_\bbK(V_i)$ be a representation for $i=1,2$\,.

\begin{definition}\label{de:ExteriorTensorprodukt1}
We set $\frakk:=\frakk_1\oplus\frakk_2$ and $V:=V_1\otimes_\bbK V_2$\,; then
$\frakk$ is a Lie algebra over $\R$ and $V$ is a vector space over $\bbK$\,.
Moreover, there is a natural representation $\rho_1\otimes\rho_2:\frakk\to\gl_\bbK(V)$\,,
given by 
\begin{equation}\label{eq:tensor1}
\big(\rho_1\otimes\rho_2\,(X_1+X_2)\big)\,v_1\otimes v_2:= \rho_1(X_1)\,v_1\otimes v_2+v_1\otimes \rho_2(X_2)\,v_2
\end{equation}
for all $X_1+X_2\in\frakk_1\oplus\frakk_2$ and $(v_1,v_2)\in V_1\times V_2$\,.
\end{definition}

\begin{lemma}\label{le:irreducible}
Let $\frakk_i$ be a semisimple Lie algebra (cf.~\cite[Ch.~1]{Kn}), $V_i$ be a Hermitian
vector space and $\rho_i:\frakk_i\to\fraku(V_i)$ be a complex-irreducible representation for
$i=1,2$\,. Then $\rho_1\otimes\rho_2$ is a complex-irreducible representation,
too.
\end{lemma}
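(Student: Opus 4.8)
The plan is to reduce the statement to a standard fact about tensor products of irreducible modules over a direct sum of Lie algebras, being careful about the real-versus-complex issue. First I would recall that when we work over $\C$, the complexification of a real semisimple Lie algebra $\frakk_i$ acts on the complex space $V_i$, but what really matters for irreducibility is the $\C$-span $\frakg_i$ of $\rho_i(\frakk_i)$ inside $\gl_\C(V_i)$, or equivalently the associative subalgebra it generates. By Burnside's theorem (the Jacobson density theorem for finite-dimensional simple modules over $\C$), the fact that $V_i$ is complex-irreducible as a $\frakk_i$-module is equivalent to saying that the associative subalgebra of $\End_\C(V_i)$ generated by $\rho_i(\frakk_i)$ equals all of $\End_\C(V_i)$.

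The key step is then the following. The associative algebra generated by $(\rho_1\otimes\rho_2)(\frakk_1\oplus\frakk_2)$ inside $\End_\C(V_1\otimes_\C V_2)\cong\End_\C(V_1)\otimes_\C\End_\C(V_2)$ contains both $\End_\C(V_1)\otimes\Id$ and $\Id\otimes\End_\C(V_2)$: the first because $\rho_1(\frakk_1)$ acts as $\rho_1(X_1)\otimes\Id$ by Eq.~\eqref{eq:tensor1} and generates $\End_\C(V_1)\otimes\Id$ by Burnside applied to $\rho_1$, the second symmetrically. Since these two subalgebras together generate all of $\End_\C(V_1)\otimes_\C\End_\C(V_2)=\End_\C(V_1\otimes_\C V_2)$ (products of the form $(S\otimes\Id)(\Id\otimes T)=S\otimes T$ span), we conclude by Burnside again — now in the converse direction — that $V_1\otimes_\C V_2$ is complex-irreducible. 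Note that semisimplicity of the $\frakk_i$ is not strictly needed for this argument; what the hypotheses really give (via $\rho_i:\frakk_i\to\fraku(V_i)$, hence complete reducibility) is that ``irreducible'' is equivalent to the density statement. One may instead invoke it to cite a clean statement such as \cite[Ch.~XVIII]{La} on tensor products of simple modules over $\frakk_1\oplus\frakk_2$.

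The main obstacle is purely a matter of bookkeeping with the ground fields: one must make sure throughout that ``irreducible'' means \emph{complex}-irreducible (absolutely irreducible is automatic over $\C$) and that Burnside's theorem is being applied over $\C$, where it holds without the subtleties that arise over $\R$. Once that is pinned down, the argument is the two-line density computation above. I would phrase the final write-up by first stating Burnside's theorem as a cited lemma, then giving the containment of $\End_\C(V_1)\otimes\Id$ and $\Id\otimes\End_\C(V_2)$ in the generated algebra, and finally applying density in reverse; no computation beyond $(S\otimes\Id)(\Id\otimes T)=S\otimes T$ is required.
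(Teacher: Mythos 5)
Your proposal is correct, but it proves the lemma by a genuinely different route than the paper. You argue via Burnside's theorem (Jacobson density over $\C$): complex-irreducibility of $\rho_i$ means the associative $\C$-subalgebra of $\End(V_i)$ generated by $\rho_i(\frakk_i)$ is all of $\End(V_i)$, and since the algebra generated by $(\rho_1\otimes\rho_2)(\frakk_1\oplus\frakk_2)$ contains $\End(V_1)\otimes\Id$ and $\Id\otimes\End(V_2)$, hence all of $\End(V_1\otimes_\C V_2)$, the tensor product is irreducible. The paper instead complexifies $\frakk_1$ and $\frakk_2$, chooses Cartan subalgebras and positive roots, identifies the highest weight of $\rho_1\otimes\rho_2$ as $\lambda_1\oplus\lambda_2$, and then uses the Theorem of the Highest Weight together with Weyl's dimension formula to see that the irreducible submodule with this highest weight has dimension $\dim(V_1)\cdot\dim(V_2)$, so it exhausts $V_1\otimes_\C V_2$. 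Your argument is more elementary and more general: as you note, it uses neither semisimplicity nor unitarity, only that the ground field is $\C$ and the modules are finite-dimensional, whereas the paper's argument stays within the structure theory it already cites (Knapp) and additionally identifies the highest weight of the product. One small caveat for your write-up: if some $V_i$ is one-dimensional, then $\rho_i=0$ (as $\frakk_i$ is semisimple), and the non-unital algebra generated by $\rho_i(\frakk_i)$ is $\{0\}$, not $\End(V_i)$; so your intermediate containment should either work with the unital generated algebra or dispose of this degenerate case separately (the conclusion there is immediate since $V_1\otimes_\C V_2\cong V_2$ as a module). This does not affect the substance of the argument.
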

\begin{proof}
Set $\frakk:=\frakk_1\oplus\frakk_2$ and $\rho:=\rho_1\otimes\rho_2$\,. 
Let $\frakg_i:=\frakk_i^\C$ denote the complexified Lie algebra, which is a
complex semisimple Lie algebra. Then $\rho_i$ induces an irreducible representation
$\rho_i:\frakg_i\to\gl(V_i)$\,. Moreover, if $\fraka_i$ is a maximal Abelian subspace of $\frakk_i$\,, then
$\fraka_i\oplus\i\,\fraka_i$ is a ``Cartan subalgebra'' of
$\frakg_i$ (see~\cite[Ch.~II]{Kn})\,. 
Let $\Delta_i$ denote the corresponding set of ``roots''. Then the roots
take real values on $\i\,\fraka$\,, and hence we may choose a ``total
ordering'' on the dual space of $\i\,\fraka$  to obtain
the corresponding set $\Delta^+_i$ of ``positive roots'' and the
corresponding ``highest weight'' $\lambda_i$ for $i=1,2$\,.
Furthermore, $\frakg:=\frakg_1\oplus\frakg_2$ is also a complex, semisimple Lie
algebra, $(\fraka_1\oplus\fraka_2)+\i\,(\fraka_1\oplus\fraka_2)$ is a Cartan subalgebra of $\frakg$ and 
$\Delta^{+}:=\Delta_1^{+}\dot\cup\Delta_2^{+}$ is the corresponding set of positive roots. 
Moreover, $\rho$ induces a representation of $\frakg$ on $V_1\otimes_\C
V_2$ whose weights are given by $\mu_1\oplus\mu_2$\,, 
where $\mu_1$ and $\mu_2$ range over the weights of $\rho_1$ and $\rho_2$\,,
respectively. In particular, $\lambda:=\lambda_1\oplus\lambda_2$ is the highest weight of $\rho$\,.
Let $V_\lambda$ be an irreducible $\frakg$-submodule of $V_1\otimes_\C V_2$ 
such that the corresponding Eigenspace $E_\lambda\subset
V_\lambda$ is non-trivial (such a module clearly exists). 
Then, according to the ``Theorem of the highest weight'' in
combination with ``Weyl's dimension formula'' (see~\cite[Ch.~5]{Kn}), we have 
$\dim(V_\lambda)=\dim(V_1)\cdot\dim(V_2)=\dim(V)$\,, hence $V_\lambda=V_1\otimes_\C V_2$ and
therefore already $V_1\otimes_\C V_2$ is an irreducible
$\frakk$-module.
\end{proof}

\begin{remark}\label{re:Switch}
Let $V$ be a Euclidian space, $K$ be a connected Lie group and
$\rho:K\to\rmO(V)$ be a representation. Let $\frakk$ be the Lie algebra of $K$
and $\rho:\frakk\to\so(V)$ be the induced representation.
Since $K$ is connected, a subspace $W\subset V$ is
$K$-invariant if and only if it is $\frakk$-invariant. 
Hence $K$ acts irreducible on $V$ if and only if $V$ is an irreducible $\frakk$-module.  
\end{remark}

Let a simply connected symmetric space $M$ be given. 
We let $K$ be the isotropy subgroup of $\Iso(M)^0$ at the origin $p$ and
$\rho:K\to\rmO(V)$ be the isotropy representation on $V:=T_pM$\,. 
Furthermore, let $\frakk$ be the Lie algebra of $K$ and $\rho:\frakk\to\so(V)$ be the induced representation.

\begin{lemma}\label{le:Hermitian}
Suppose that $M$ is an irreducible symmetric space. Then: 
\begin{enumerate}
\item $V$ is an irreducible $\frakk$-module and and $\rho(\frakk)$ is the Holonomy Lie algebra of $M$\,.
\item
$M$ is a Hermitian symmetric space if and only if 
there exists some $J\in\rmO(V)$ with $J^2=-\Id$ (equipping $V$ with the
structure of a unitary space) such that $\rho(\frakk)\subset\fraku(V)$\,. 
\item We always have $\frakk=[\frakk,\frakk]\oplus\frakc$\,, where $\frakc$ denotes the
center of $\frakk$\,. Furthermore, the commutator ideal $[\frakk,\frakk]$ is semisimple. 
\item 
In case $M$ is Hermitian, $\frakc$ is 1-dimensional and $\rho(\frakc)=\R\,J$
holds (see~(b)).
Otherwise, $\frakc=\{0\}$ and hence $\frakk$ is semisimple. 
\end{enumerate}
\end{lemma}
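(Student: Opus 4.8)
The plan is to reduce the lemma to the de\,Rham/Ambrose--Singer description of holonomy and to Schur's Lemma (Lemma~\ref{le:Schur1}, real version, together with Remark~\ref{re:Switch}), treating the parts in the order (a), (c), (b), (d). For (a): since $M$ is simply connected with $\dim M\geq 2$ and is not flat, it is of compact or non-compact type, so $\frakg:=\fraki(M)$ is semisimple; writing $\frakg=\frakk\oplus\frakp$ for the Cartan decomposition at $p$ one has $\frakp\cong V$ and $R^M(x,y)=-\rho([x,y])$ on $\frakp$. The subspace $[\frakp,\frakp]\oplus\frakp$ is a nonzero ideal of $\frakg$, and effectiveness of the $\Iso(M)^0$-action forces it to equal $\frakg$, i.e.\ $\frakk=[\frakp,\frakp]$; since $\nabla^M R^M=0$, the theorem of Ambrose--Singer yields $\hol(M)=\operatorname{span}\{R^M(x,y):x,y\in\frakp\}=\rho([\frakp,\frakp])=\rho(\frakk)$. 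As $\rho$ is faithful (an isometry fixing $p$ with trivial differential at $p$ is the identity), $\frakk\cong\rho(\frakk)=\hol(M)$, and the de\,Rham Decomposition Theorem together with Remark~\ref{re:Switch} identifies irreducibility of $M$ with $V$ being an irreducible $\frakk$-module. For (c): by (a), $\frakk\cong\rho(\frakk)\subset\so(V)$, and the trace form of $\so(V)$ restricts to an $\ad(\frakk)$-invariant positive definite inner product on $\rho(\frakk)$, so $\frakk$ is of compact type, hence reductive; thus $\frakk=\frakc\oplus[\frakk,\frakk]$ with $\frakc$ its center and $[\frakk,\frakk]$ (compact) semisimple.

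For (b): if $M$ is Hermitian symmetric it is Kähler, so it carries a parallel orthogonal almost complex structure whose value $J$ at $p$ satisfies $J\in\rmO(V)$ and $J^2=-\Id$; being parallel, $J$ commutes with parallel transport along loops, hence with $\Hol(M)=\rho(K)$, hence with $\rho(\frakk)$, which means $\rho(\frakk)\subset\fraku(V)$. Conversely, given $J\in\rmO(V)$ with $J^2=-\Id$ and $\rho(\frakk)\subset\fraku(V)$, the operator $J$ commutes with $\rho(\frakk)=\hol(M)$, hence with the connected group $\Hol(M)$, so parallel transport of $J$ around $M$ (which is well defined since $M$ is simply connected) produces a globally defined parallel orthogonal almost complex structure $\tilde J$; parallelity forces the Nijenhuis tensor of $\tilde J$ to vanish, so $(M,g,\tilde J)$ is Kähler, and the geodesic symmetries are $\tilde J$-holomorphic (at $p$ because $-\Id$ commutes with $J$, elsewhere by parallelity of $\tilde J$), i.e.\ $M$ is Hermitian symmetric.

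For (d): put $C:=\Hom_\frakk(V,V)$, which by Schur's Lemma is a real division algebra; since $\frakc$ is central, $\rho(\frakc)\subset C\cap\so(V)$. If $M$ is \emph{not} Hermitian, then by (b) no $J\in\rmO(V)$ with $J^2=-\Id$ commutes with $\rho(\frakk)$; but a nonzero $A\in\rho(\frakc)$ would be invertible (its kernel is a proper $\frakk$-submodule of the irreducible $V$, hence $\{0\}$), and the polar decomposition $A=J\,(-A^2)^{1/2}$ gives exactly such a $J$ --- a contradiction. Hence $\frakc=\{0\}$ and $\frakk=[\frakk,\frakk]$ is semisimple. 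If $M$ \emph{is} Hermitian, fix $J$ as in (b). If $\frakk$ were semisimple, then $\rho(\frakk)=[\rho(\frakk),\rho(\frakk)]\subset[\fraku(V),\fraku(V)]=\su(V)$, so $\hol(M)\subset\su(V)$, which makes $M$ a Ricci-flat Kähler manifold, hence (being symmetric) flat --- impossible. So $\frakc\neq\{0\}$ and $0\neq\rho(\frakc)\subset C\cap\so(V)$. Since $J\in C$ we have $C\in\{\C,\bbH\}$; the case $C=\bbH$ is excluded, because it forces $\hol(M)\subset\sp(\tfrac{1}{4}\dim V)$, i.e.\ $M$ hyperkähler, hence flat. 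Therefore $C=\C$, $C\cap\so(V)=\R J$, and so $\rho(\frakc)=\R J$ and $\dim\frakc=1$.

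The main obstacle is the Hermitian case of (d): identifying the center precisely requires combining Schur's Lemma, the reductive structure from (c), and the ``Ricci-flat $\Rightarrow$ flat'' dichotomy for symmetric spaces (used once to exclude $\hol(M)\subset\su(V)$ and once to exclude $\hol(M)\subset\sp$); the other parts amount to a routine translation of the classical structure theory.
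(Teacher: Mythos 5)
Your proof is correct in substance, but it reaches the lemma by a genuinely different and more self-contained route than the paper. The paper settles (a) and (b) by citing Lemma~\ref{le:CanonicalConnection}~(b) together with Remark~\ref{re:Switch} (so that $\rho(\frakk)$ is identified with the holonomy algebra and irreducibility of $V$ is exactly the de\,Rham statement, the classical characterization of the Hermitian case being taken as known), proves (c) essentially as you do (faithfulness of $\rho$, the invariant trace form, orthogonal complements of ideals are ideals), and for (d) simply quotes Helgason, Ch.~VIII, \S~7: $K$ has non-discrete center $\rmZ_K$ exactly when $M$ is Hermitian, and then $\rho$ maps $\rmZ_K$ onto the circle in $\rmU(V)$, so (d) drops out of (c). You instead reprove the classical input: for (b) the holonomy principle (a $\Hol(M)$-invariant $J$ at $p$ extends to a parallel orthogonal almost complex structure, parallelity kills the Nijenhuis tensor, and the geodesic symmetries are holomorphic because they preserve parallel tensors and commute with $J$ at the center of symmetry), and for (d) a Schur-type analysis of the commutant $C=\Hom_\frakk(V,V)$: the polar-decomposition trick producing an orthogonal $J$ from a nonzero central element in the non-Hermitian case, and the exclusion of $\su(V)$- and quaternionic holonomy via ``Ricci-flat symmetric $\Rightarrow$ flat''. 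What your route buys is independence from Helgason's structure theory of Hermitian symmetric spaces, at the price of importing K\"ahler-holonomy facts (holonomy in $\su$ forces Ricci-flatness, hyperk\"ahler holonomy likewise) that are of comparable depth to what the paper cites.

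Two steps should be justified rather than asserted. First, in excluding $C\cong\bbH$ you still need \emph{orthogonal} (equivalently skew-symmetric) anticommuting complex structures inside $C$ before concluding $\hol(M)\subset\sp(\tfrac14\dim V)$; this does follow, because $C$ is closed under adjoints (Lemma~\ref{le:Schur2}~(e)) and positivity of $A\mapsto A^{*}A$ rules out the involutions of $\bbH$ with three-dimensional fixed space, but it is not automatic from the algebra isomorphism $C\cong\bbH$ alone. Second, ``Ricci-flat symmetric $\Rightarrow$ flat'' (used twice) needs a one-line reason, e.g.\ that an irreducible symmetric space of dimension at least $2$ is of compact or non-compact type, hence Einstein with nonzero Einstein constant by definiteness of the Killing form on $\frakp$. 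Both points are routine, so I regard them as presentation gaps rather than mathematical ones.
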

\begin{proof}
For (a) and (b): Since $K$ is connected (cf. Sec~\ref{se:3}), the result
follows from Lemma~\ref{le:CanonicalConnection}~(b) in combination with Remark~\ref{re:Switch}.

For~(c): Since $\rho$ is a
faithful representation, $\frakk$ can be seen as a subalgebra of
$\so(T_pM)$\,. Then for any ideal $\fraka\subset\frakk$ its orthogonal
complement $\fraka^\bot$ (see~\eqref{eq:Trace}) is an ideal of $\frakk$\,, too. 
Hence there exists a decomposition of $\frakk$ into an Abelian subspace and
simple ideals. Now (c) is obvious. 

For~(d): According to~\cite[Ch.~VIII,~\S~7]{He},
$K$ has non-discrete center $\rmZ_K$ if and only if $M$ is Hermitian and, in the
Hermitian case, $\rho$ maps $\rmZ_K$ isomorphically onto the circle subgroup $\rmS^1\subset\rmU(V)$\,. Now (d) follows from (c).
\end{proof}

Let $K_i$ be a Lie group, $V_i$ be a Euclidian space and $\rho_i:K_i\to\rmO(V_i)$ be a representation for
$i=1,2$\,.

\begin{definition}\label{de:ExteriorTensorprodukt2}
We let $K:=K_1\times K_2$ denote the product Lie
group and set $V:=V_1\otimes V_2$\,.
Then there is a natural representation $\rho_1\otimes\rho_2:K\to\Gl(V)$\,,
given by 
\begin{equation}\label{eq:tensor2}
\big(\rho_1\otimes\rho_2\,(g_1,g_2)\big)\,v_1\otimes v_2= \rho_1(g_1)\,v_1\otimes \rho_2(g_2)\,v_2
\end{equation}
for all $(g_1,g_2)\in K_1\times K_2$ and $(v_1,v_2)\in V_1\times V_2$\,.
\end{definition}

Let simply connected symmetric spaces $M_1$ and $M_2$ be given. Let $p_i$
be an origin of $M_i$\,, $K_i$ be the corresponding isotropy subgroup of
$\Iso(M_i)^0$ and $\rho_i:K_i\to\rmO(V_i)$ be the isotropy representation on $V_i:=T_{p_i}M_i$ for $i=1,2$\,. 

\begin{lemma}
\label{le:ExteriorTensorprodukt}
Suppose that $M_1$ and $M_2$ both are irreducible symmetric spaces. Let
$\rho_1\otimes\rho_2$ be defined according to Def.~\ref{de:ExteriorTensorprodukt2}. Then:
\begin{enumerate}
\item In case neither $M_1$ nor $M_2$ is a Hermitian symmetric space, $\rho_1\otimes\rho_2$
  is irreducible.
\item The same is true if exactly one of $M_1$ or $M_2$ is a Hermitian symmetric space.
\item In case $M_1$ and $M_2$ both are Hermitian symmetric spaces, we let $J_i$
  denote the corresponding complex structure of $V_i$ for $i=1,2$\,. Then 
\begin{align}
V_\pm:=\Spann{v_1\otimes v_2\mp J_1\,v_1\otimes J_2\,v_2}{(v_1,v_2)\in V_1\times
  V_2}
\end{align}
gives the decomposition $V_1\otimes V_2=V_+\oplus V_-$ into two irreducible invariant
subspaces of the same dimension. 
\end{enumerate}
\end{lemma}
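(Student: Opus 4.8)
The plan is to pass to the Lie-algebra level and complexify. By Remark~\ref{re:Switch} it suffices to prove the stated (ir)reducibility for the induced representation of $\frakk:=\frakk_1\oplus\frakk_2$ on the real space $V_1\otimes V_2$, where $\frakk_i$ denotes the Lie algebra of $K_i$; and since $(V_1\otimes V_2)^\C\cong V_1^\C\otimes_\C V_2^\C$ as complex $\frakk$-modules, everything can be read off from the decomposition of $V_1^\C\otimes_\C V_2^\C$ into complex-irreducible $\frakk$-submodules together with the action of complex conjugation. The input from symmetric-space theory is Lemma~\ref{le:Hermitian}: $V_i$ is a complex-irreducible real $\frakk_i$-module; if $M_i$ is not Hermitian then $\frakk_i$ is semisimple and, by Lemma~\ref{le:Komplexifikation}~(b) combined with Lemma~\ref{le:Hermitian}~(b), $V_i^\C$ is already complex-irreducible (and self-conjugate, being the complexification of a real module); if $M_i$ is Hermitian then $\frakk_i=[\frakk_i,\frakk_i]\oplus\R J_i$ with $[\frakk_i,\frakk_i]$ semisimple, $V_i^\C=V_{i,+}\oplus V_{i,-}$ with $V_{i,\pm}$ the $\pm\,\i$-eigenspace of $\rho_i^\C(J_i)$ (so $J_i$ acts on $V_{i,\pm}$ by the scalar $\pm\,\i$, whence $V_{i,\pm}$ is still complex-irreducible under $[\frakk_i,\frakk_i]$), and $\overline{V_{i,+}}\cong V_{i,-}$.

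For part (a) both $\frakk_i$ are semisimple and both $V_i^\C$ are complex-irreducible, so Lemma~\ref{le:irreducible} gives that $(V_1\otimes V_2)^\C\cong V_1^\C\otimes_\C V_2^\C$ is complex-irreducible; since a real module whose complexification is irreducible is itself irreducible, this proves (a). For part (b), say $M_1$ is not Hermitian and $M_2$ is: then $(V_1\otimes V_2)^\C\cong(V_1^\C\otimes_\C V_{2,+})\oplus(V_1^\C\otimes_\C V_{2,-})$, each summand being complex-irreducible under $\frakk_1\oplus[\frakk_2,\frakk_2]$ by Lemma~\ref{le:irreducible}, hence also under $\frakk$ because the central part $\R J_2$ acts on it by the scalar $\pm\,\i$. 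The two summands are non-isomorphic (distinguished by this scalar) and are interchanged by complex conjugation (using that $V_1^\C$ is self-conjugate and $\overline{V_{2,+}}\cong V_{2,-}$); the standard fact that a real module whose complexification is a sum of exactly two non-isomorphic, mutually conjugate complex-irreducibles is itself irreducible then proves (b).

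For part (c) one has $(V_1\otimes V_2)^\C\cong\bigoplus_{\epsilon,\epsilon'\in\{+,-\}}V_{1,\epsilon}\otimes_\C V_{2,\epsilon'}$, and Lemma~\ref{le:irreducible} (applied to $[\frakk_1,\frakk_1]\oplus[\frakk_2,\frakk_2]$, then upgraded to $\frakk$ since $J_1,J_2$ act by scalars) shows that all four summands are complex-irreducible $\frakk$-modules; they are pairwise non-isomorphic because the central element $(J_1,J_2)$ acts on $V_{1,\epsilon}\otimes_\C V_{2,\epsilon'}$ by the pair of scalars $(\epsilon\,\i,\epsilon'\,\i)$, and complex conjugation carries $V_{1,\epsilon}\otimes_\C V_{2,\epsilon'}$ to $V_{1,-\epsilon}\otimes_\C V_{2,-\epsilon'}$. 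A short computation then identifies the spanning vectors $v_1\otimes v_2\mp J_1v_1\otimes J_2v_2$ of $V_\pm$ as the real parts of $(v_1-\i J_1v_1)\otimes(v_2\mp\i J_2v_2)$, giving $V_+^\C=(V_{1,+}\otimes_\C V_{2,+})\oplus(V_{1,-}\otimes_\C V_{2,-})$ and $V_-^\C=(V_{1,+}\otimes_\C V_{2,-})\oplus(V_{1,-}\otimes_\C V_{2,+})$; in particular $V_+$ and $V_-$ are complementary subspaces of $V_1\otimes V_2$ of equal real dimension, and each is real-irreducible by the argument used for (b). This proves (c).

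I expect the bookkeeping in part (c) to be the main obstacle: one must keep straight which of the four complex-irreducible pieces are isomorphic and which are conjugate, so that the submodule count inside $V_\pm^\C$ really leaves only $\{0\}$ and $V_\pm^\C$, and one must verify the explicit matching of $V_\pm$ with the two conjugate pairs. The only genuinely non-formal ingredient is the observation that the isotropy representation of an irreducible symmetric space is of real type unless the space is Hermitian, and this is supplied cleanly by Lemma~\ref{le:Komplexifikation}~(b) together with Lemma~\ref{le:Hermitian}~(b).
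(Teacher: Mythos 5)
Your proposal is correct, and its skeleton coincides with the paper's: pass to the Lie algebras $\frakk_1\oplus\frakk_2$ via Remark~\ref{re:Switch}, use Lemma~\ref{le:Hermitian} together with Lemma~\ref{le:Komplexifikation} to handle the Hermitian/non-Hermitian dichotomy and to split off the eigenspaces $(V_i)_{\pm\i}$, and feed the resulting complex-irreducible factors into Lemma~\ref{le:irreducible} (restricting to the semisimple part $[\frakk_i,\frakk_i]$ when $M_i$ is Hermitian, since the center acts by scalars). Where you genuinely diverge is the descent from complex to real irreducibility in (b) and (c): you decompose the full complexification $(V_1\otimes V_2)^\C\cong V_1^\C\otimes_\C V_2^\C$ into two (resp.\ four) complex-irreducible summands, distinguish them by the scalars through which the central elements mapping to $J_1,J_2$ act, observe that conjugation interchanges them in pairs, and invoke the standard fact that a real module whose complexification is a sum of two non-isomorphic, mutually conjugate irreducibles is irreducible; note that the non-isomorphy supplied by your central-character argument is exactly what makes that fact applicable. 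The paper instead exploits that $J_1$ lies in $\rho(\frakc_1)$: it regards $V$ (resp.\ $V_\pm$) as a complex vector space via $J_1$, identifies it complex-linearly and equivariantly with $(V_1)_{+\i}\otimes_\C V_2^\C$ (resp.\ $(V_1)_{+\i}\otimes_\C(V_2)_{\pm\i}$), concludes complex irreducibility, and then gets real irreducibility because every real invariant subspace is automatically $J_1$-stable; the equality $\dim V_+=\dim V_-$ in (c) is then supplied by an explicit linear isomorphism $V_+\to V_-$. The paper's route avoids the real-form bookkeeping and the auxiliary ``standard fact''; yours yields the complete isotypic picture of $(V_1\otimes V_2)^\C$ and delivers the complementarity and equal dimensions of $V_\pm$ for free, at the price of the conjugation and non-isomorphism verifications, which your sketch does carry out correctly.
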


\begin{proof}
Set $K:=K_1\times K_2$\,, $V:=V_1\otimes V_2$ and $\rho:=\rho_1\otimes\rho_2$\,. 
Since $K_i$ is connected, we may switch to the corresponding Lie algebras
$\frakk_i$ and their induced representations $\rho_i:\frakk_i\to\so(V_i)$
according to Remark~\ref{re:Switch}. Then $\rho_1\otimes\rho_2$ is given by
Eq.~\eqref{eq:tensor1} (because of the ``chain rule'').

For~(a): As a consequence of Lemma~\ref{le:Komplexifikation} in combination with Lemma~\ref{le:Hermitian}, 
in this case $\frakk_i$ acts irreducibly on $V_i^\C$ for $i=1,2$\,, too. Hence $V^\C$ is
a complex-irreducible $\frakk$-module, too, according to Lemma~\ref{le:irreducible}.
Thus already $V$ is necessarily an irreducible $\frakk$-module.

For~(b): Without loss of generality, we can assume that $M_1$ is Hermitian but
not $M_2$\,; let $J_1$ denote the complex structure of $V_1$\,.  
Then, according to Lemma~\ref{le:Komplexifikation}~(b), $\frakk_2$ acts irreducibly on
$V_2^\C$\,, whereas Eq.~\eqref{eq:Vpm} gives the decomposition $V_1^\C=(V_1)_{+\i}\oplus (V_1)_{-\i}$
into two invariant subspaces such that $(V_1)_{+\i}$ is an irreducible
$\frakk_1$-modules (over $\C$). Then $\frakk_2$ and
$\tilde\frakk_1:=[\frakk_1,\frakk_1]$ both are semisimple Lie algebras 
and $\frakk_1=\frakc_1\oplus\tilde\frakk_1$ holds 
according to Lemma~\ref{le:Hermitian}~(c), where $\frakc_1$ denotes the
center of $\frakk_1$\,. Furthermore, we have $\rho_1(\frakc_1)=\R\,J_1$ because of Lemma~\ref{le:Hermitian}~(d).
In particular, $(V_1)_{+\i}$ is even a complex-irreducible $\tilde\frakk_1$-module (since $\frakc_1$ acts through scalars on
$(V_1)_{+\i}$). Therefore, we can apply Lemma~\ref{le:irreducible} to conclude
that $(V_1)_{+\i}\otimes V_2^\C$ is a complex-irreducible
$\frakk$-module. Furthermore, in accordance with Eq.~\eqref{eq:IsomorphismOverC1},
$$
V\to (V_1)_{+\i}\otimes V_2^\C,\; v_1\otimes v_2\mapsto 1/2\,(v_1-\i\,J_1\,v_1)\otimes
v_2$$
is an isomorphism of $\frakk$-modules over $\C$ (where $V$ is seen as a
complex space by means of $J_1$).
Therefore, $V$ is also irreducible over $\C$\,. But $\rho_1(\frakc_1)=\R\,J_1$ holds\,, 
hence $V$ is irreducible over $\R$\,, too. This finishes the
proof for~(b).

For~(c): Here we obtain the decomposition $V_i^\C=(V_i)_{+\i}\oplus (V_i)_{-\i}$ 
into $\frakk_i$-invariant subspaces such that
$\frakk_i$ acts irreducibly on both $(V_i)_{+\i}$ and $(V_i)_{-\i}$ (over $\C$) 
for $i=1,2$\,. Furthermore, using once again Eq.~\eqref{eq:IsomorphismOverC1},$$
V_+\to (V_1)_{+\i}\otimes (V_2)_{+\i},\; (v_1\otimes v_2-J_1\,v_1\otimes J_2\,v_2)\mapsto 
1/2\,\big((v_1-\i\,J_1\,v_1)\otimes (v_2-\i\,J_2\,v_2)\big)
$$
is an isomorphism of $\frakk$-modules over $\C$ (where $V_+$ is seen as a complex space similar as before).
Using similar arguments as before, we now conclude that $V_+$ is an irreducible $\frakk$-module.
Analogously, we can show that $V_-$ is an irreducible
$\frakk$-module, too. Furthermore,
$$
V_+\to V_-, v_1\otimes v_2-J_1\,v_1\otimes
J_2\,v_2\mapsto v_1\otimes v_2+J_1\,v_1\otimes J_2\,v_2
$$
is a linear isomorphism. This finishes the proof for~(c).
 \end{proof}
\end{appendix}

\vspace{2cm}
\begin{center}

 \qquad
 \parbox{60mm}{Tillmann Jentsch\\
  Mathematisches Institut\\
  Universit{\"a}t zu K{\"o}ln\\
  Weyertal 86-90\\
  D-50931 K{\"o}ln, Germany\\[1mm]
  \texttt{tjentsch@math.uni-koeln.de}}

\end{center}

\end{document}